\crefname{section}{Section}{Sections}
\crefname{appendix}{Appendix}{Appendices}
\crefname{theorem}{Theorem}{Theorems}
\crefname{lemma}{Lemma}{Lemmas}
\crefname{corollary}{Corollary}{Corollaries}
\crefname{definition}{Definition}{Definitions}
\crefname{figure}{Figure}{Figures}
\Crefname{subsection}{Section}{Sections}
\crefname{subsection}{Section}{Sections}
\crefname{assumption}{Assumption}{Assumptions}
\Crefname{assumption}{Assumption}{Assumptions}
\newcommand{\wh}[1]{\widehat{#1}}
\newcommand{\wt}[1]{\widetilde{#1}}
\newcommand{\im}{\mathrm{i}}
\DeclareMathOperator{\diag}{diag} 
\DeclareMathOperator*{\argmax}{arg\,max}
\DeclareMathOperator*{\argmin}{arg\,min}
\renewcommand{\d}[0]{\ensuremath{\operatorname{d}\!}} 
\begin{document}

\title{Multigrid reduction-in-time convergence for advection problems: A Fourier analysis perspective\protect\thanks{A published version of this  preprint is available at \url{https://doi.org/10.1002/nla.2593}.}\protect\thanks{This work was supported in part by NSERC of Canada. The work of the third author was partially supported by an Australian Government Research Training Program Scholarship.}}

\author[1]{H. De Sterck}

\author[2]{S. Friedhoff}

\author[3]{O. A. Krzysik*}

\author[4]{S. P. MacLachlan}

\authormark{De Sterck \textsc{et al}}

\address[1,3]{\orgdiv{Department of Applied Mathematics}, \orgname{University of Waterloo}, \orgaddress{\state{ON}, \country{Canda}}}

\address[2]{\orgdiv{Department of Mathematics}, \orgname{Bergische Universit\"at Wuppertal}, \orgaddress{\country{Germany}}}

\address[4]{\orgdiv{Department of Mathematics and Statistics}, \orgname{Memorial University of Newfoundland}, \orgaddress{\state{NL}, \country{Canada}}}

\corres{\email{okrzysik@uwaterloo.ca}}


\abstract[Abstract]{%
	A long-standing issue in the parallel-in-time community is the poor convergence of standard iterative parallel-in-time methods for hyperbolic partial differential equations (PDEs), and for advection-dominated PDEs more broadly.
	Here, a local Fourier analysis (LFA) convergence theory is derived for the two-level variant of the iterative parallel-in-time method of multigrid reduction-in-time (MGRIT).
	This closed-form theory allows for new insights into the poor convergence of MGRIT for advection-dominated PDEs when using the standard approach of rediscretizing the fine-grid problem on the coarse grid.
	Specifically, we show that this poor convergence arises, at least in part, from inadequate coarse-grid correction of certain smooth Fourier modes known as characteristic components, which was previously identified as causing poor convergence of classical spatial multigrid on steady-state advection-dominated PDEs.	
	We apply this convergence theory to show that, for certain semi-Lagrangian discretizations of advection problems,  MGRIT convergence using rediscretized coarse-grid operators cannot be robust with respect to CFL number or coarsening factor.
	A consequence of this analysis is that techniques developed for improving convergence in the spatial multigrid context can be re-purposed in the MGRIT context to develop more robust parallel-in-time solvers. 
	This strategy has been used in recent work to great effect; here, we provide further theoretical evidence supporting the effectiveness of this approach.
	}

\keywords{Parallel-in-time methods, MGRIT, Parareal, advection equations, local Fourier analysis}

\maketitle

\section{Introduction}

Parallel-in-time methods for the solution of initial-value, time-dependent partial differential equations (PDEs) have seen an increase in both popularity and relevance over the past two decades, with the advancement of massively parallel computers.
Thorough reviews of the parallel-in-time field are given by Gander\cite{Gander_2015} and, Ong \& Schroder.\cite{Ong_Schroder_2020}
In this paper, we study the convergence of the multigrid reduction-in-time (MGRIT) algorithm,\cite{Falgout_etal_2014} which, as its name implies, achieves temporal parallelism by deploying multigrid techniques in the temporal direction.
Our work also applies to the well-known Parareal method,\cite{Lions_etal_2001} since the two-grid variant of MGRIT is the same as Parareal for certain choices of algorithmic parameters.

A long-standing issue in the parallel-in-time field is the development of efficient solvers for hyperbolic PDEs, and for advection-dominated PDEs more broadly.
It is well-documented that the convergence of MGRIT and Parareal is typically poor for advection-dominated problems.\cite{Chen_etal_2014, 
Dai_Maday_2013,
DeSterck_etal_2021, 
Dobrev_etal_2017, 
Gander_Vandewalle_2007, 
Gander_2008, 
Gander_Lunet_2020,
Hessenthaler_etal_2018, 
Howse_etal_2019, 
Howse2017_thesis,
KrzysikThesis2021, 
Nielsen_etal_2018, 
Ruprecht_Krause_2012, 
Ruprecht_2018, 
Schmitt_etal_2018, 
Schroder_2018,
Steiner_etal_2015}
In fact, slow and non-robust convergence of these methods for advection-dominated problems has, for the most part, precluded any significant parallel speed-ups over sequential time-stepping. 
This is in stark contrast to diffusion-dominated PDEs, for which convergence is typically fast, and is typically robust with respect to PDE, discretization, and algorithmic parameters, resulting in substantial speed-ups over time-stepping.

While our focus in this paper is on multigrid-in-time methods, we note that other parallel-in-time strategies have been developed for advective and wave-propagation problems that do not appear to suffer from the same poor convergence as Parareal and MGRIT.\cite{McDonald_etal_2018,Gander_etal_2019,Gander_Wu_2020,Liu_Wu_2020,Liu_etal_2022} 
In particular, the referenced algorithms rely (in one form or another) on diagonalizing the discretized problem in the time direction and then solving in parallel a set of decoupled linear systems, a technique introduced by Maday and R{\o}nquist.\cite{Maday_Ronquist_2008}

A large number of convergence analyses for MGRIT and Parareal have been developed,\cite{DeSterck_etal_2019, 
Dobrev_etal_2017, 
Friedhoff_MacLachlan2015, 
Gander_2008, 
Gander_etal_2018, 
Gander_Vandewalle_2007, 
Hessenthaler_etal_2020, 
Ruprecht_2018, 
Southworth_2019},
with several paying particular attention to advection-related problems.\cite{
DeSterck_etal_2021,
DeSterck_etal_2019,
Gander_2008,
Gander_Vandewalle_2007, 
Ruprecht_2018}
Yet, there is no widely accepted explanation for what fundamentally makes the parallel-in-time solution of these problems so difficult.
In De~Sterck~et~al.,\cite{DeSterck_etal_2021} informed by the convergence theory of Dobrev~et~al.,\cite{Dobrev_etal_2017} we developed a heuristic optimization strategy for building coarse-grid operators. 
While yielding previously unattained convergence rates, our coarse-grid operators were limited to constant-wave-speed advection problems, relying on prohibitively expensive computation to determine these operators.
So, while there are many MGRIT and Parareal convergence analyses, none so far appear to have inspired the development of efficient and practical solvers for advection-dominated PDEs.

Here, we analyze the convergence of two-level MGRIT through the lens of local Fourier analysis (LFA). 
Originally proposed by Brandt,\cite{Brandt_1977} LFA is a predictive tool for studying the convergence behaviour of multigrid methods, and is the most widely used tool for doing so.\cite{Brandt_1981,
Stueben_Trottenberg_1982,
Vandewalle_Horton_1995,
Trottenberg_etal_2001,
Wienands_Joppich_2005,
Yavneh_1998}
LFA has been used previously to investigate MGRIT convergence,\cite{DeSterck_etal_2019,Friedhoff_MacLachlan2015} 
as well as other multigrid-in-time methods.\cite{Friedhoff_MacLachlan_2013,
Gander_etal_2013,
Gander_Neumuller_2016,
Vandewalle_Horton_1995}
A particular novelty of our LFA theory is that it is presented in closed form, which offers significantly more insight than previous efforts, such as in References~\citenum{DeSterck_etal_2019,Friedhoff_MacLachlan2015}. 
That is, we provide analytical expressions for the LFA predictions of the norm and spectral radius of the error propagation operator, rather than arriving at them through semi-analytical means (i.e., with some numerical assistance).
Such closed-form LFA results are well-known for some simple cases (e.g., optimizing the relaxation parameter in weighted Jacobi), but are not always attainable, particularly when applying the method to complicated PDEs or algorithms.\cite{Brown_etal_2021} 
As another example, even closed-form convergence factors for MGRIT applied to discretizations of linear advection are not currently known.

Our LFA theory yields results consistent with the prior MGRIT analyses of References~\citenum{Dobrev_etal_2017, 
Hessenthaler_etal_2020, 
Southworth_2019,
Southworth_etal_2021}, which were derived using different analysis tools.
Thus, in this sense our LFA does not yield ``new'' final convergence bounds.
However, the fact that we have closed-form convergence results in temporal Fourier space allows us to draw important new connections between the convergence issues of MGRIT and those of classical spatial multigrid methods in a way that these existing theories cannot. 
Specifically, it is widely known for spatial multigrid methods that poor coarse-grid correction of certain smooth Fourier modes known as \textit{characteristic components} leads to convergence issues for steady-state advection-dominated problems.\cite{Brandt_1981,
Brandt_Yavneh_1993,
Yavneh_1998,
Oosterlee_Washio_2000,
Trottenberg_etal_2001,
Wan_Chan_2003,
Bank_etal_2006,
Yavneh_Weinzierl_2012} 
Here, we establish that this is the case for MGRIT also, with poor convergence due, at least in part, to an inadequate coarse-grid correction of smooth space-time characteristic components, which was something initially suggested to us by the author of Reference~\citenum{Yavneh_1998}.

This new connection suggests that MGRIT solvers with increased robustness could be developed by generalizing the fixes proposed in the steady-state setting. 
In fact, in a sequence of recent papers,\cite{DeSterck_etal_2023_SL,DeSterck_etal_2023_MOL,DeSterck_etal_2023_nonlin} we have leveraged this connection to develop fast MGRIT-based solvers for hyperbolic PDEs that far outperform those based on standard rediscretization or direct discretization techniques, even including for nonlinear hyperbolic PDEs with shocks.\cite{DeSterck_etal_2023_nonlin}
Underlying each of these solvers is the use of an idea developed by Yavneh\cite{Yavneh_1998} in the steady-state case of increasing the accuracy of the coarse-grid operator relative to the fine-grid operator. 
By formally establishing this link with spatial multigrid methods, this paper provides the theoretical foundations for the advancements made in References~\citenum{DeSterck_etal_2023_SL,DeSterck_etal_2023_MOL,DeSterck_etal_2023_nonlin}.

The remainder of this paper is organised as follows.
An algorithmic description of MGRIT and key assumptions are presented in \cref{sec:prelim}.
\Cref{sec:error_prop} describes the error propagation operator of the MGRIT algorithm.
The LFA convergence theory is presented in \Cref{sec:LFA}.
A short comparison to related literature is given in \cref{sec:literature_comparison}.
In \cref{sec:char_comp}, the LFA theory is used to describe the link between convergence of MGRIT and spatial multigrid methods, and this is applied to analyze MGRIT convergence for a class of semi-Lagrangian discretizations of linear advection problems.
Concluding remarks are given in \Cref{sec:conclusions}.

\section{Preliminaries}
\label{sec:prelim}

\Cref{sec:MGRIT} provides a brief overview of the MGRIT algorithm as it applies to linear problems, and  \cref{sec:assumptions-notation} describes some key assumptions and notation used in our analysis.
%

\subsection{MGRIT}
\label{sec:MGRIT}

Consider the linear, initial-value problem (IVP) $\frac{\partial u}{\partial t} = \mathcal{L}(u) + f$ with $u(x,0) = u_0(x)$. Suppose this problem is fully discretized in space and time using a one-step method to yield the discrete equations
\begin{align} \label{eq:one-step}
\bm{u}_{n+1} = \Phi \bm{u}_n + \bm{g}_{n+1}, \quad n = 0, 1, \ldots, n_t - 2,
\end{align}
supplemented with the initial data $\bm{u}_0 \in \mathbb{R}^{n_x}$, where $\bm{u}_n$ is the spatially discrete approximation to $u(x,t_n)$.
In \eqref{eq:one-step}, $\Phi \in \mathbb{R}^{n_x \times n_x}$ is the \textit{time-stepping operator} that, in this work, we assume is linear, and is time independent.
Discretized systems as in~\eqref{eq:one-step} arise in several contexts, including the method-of-lines discretization process, for arbitrary spatial discretization of $\mathcal{L}(u)$ and one-step (but, possibly, multi-stage) discretization of the time derivative.
The vector $\bm{g}_{n+1} \in \mathbb{R}^{n_x}$ contains solution-independent information pertaining to spatial boundary conditions and source terms.
The system \eqref{eq:one-step} is naturally solved by sequential time-stepping: Computing $\bm{u}_1$ from $\bm{u}_0$, then $\bm{u}_2$ from $\bm{u}_1$, and so on.
By contrast, a parallel-in-time method seeks the solution of \eqref{eq:one-step} over all values of $n$ in parallel.
With this in mind, it is convenient to assemble the equations in \eqref{eq:one-step} into the global linear system
\begin{align} \label{eq:A0_system}
A_0 
\bm{u}
:=
\begin{bmatrix}
I & \\
-\Phi & I \\
& \ddots & \ddots \\
& & -\Phi & I
\end{bmatrix}
\begin{bmatrix}
\bm{u}_0 \\
\bm{u}_1 \\
\vdots \\
\bm{u}_{n_t-1}
\end{bmatrix}
=
\begin{bmatrix}
\bm{u}_0 \\
\bm{g}_1 \\
\vdots \\
\bm{g}_{n_t-1}
\end{bmatrix}
=:
\bm{b}.
\end{align}

MGRIT\cite{Falgout_etal_2014} iteratively computes the solution of \eqref{eq:A0_system} by employing multigrid reduction techniques in the time direction.
To this end, let us associate with \eqref{eq:one-step} a grid of $n_t$ equispaced points in time: $0 = t_0 < t_1 < \ldots < t_{n_t-1} = T$, with $t_n = n \delta t$ and time step $\delta t$.
Let these time points constitute a ``fine grid,'' and let a coarsening factor $m \in \mathbb{N}$ induce a ``coarse grid,'' consisting of every $m$th fine-grid point. 
The set of points appearing exclusively on the fine grid are called F-points, while those shared by both fine and coarse grids are C-points. 
For simplicity, we assume that $t_0$ is a C-point, and that $n_t$ is divisible by $m$.

A single MGRIT iteration on \eqref{eq:A0_system} consists of pre-relaxation, a coarse-grid correction, and then post-relaxation, which we now detail.
Suppose we have an approximation $\bm{w} \approx \bm{u}$, with $\bm{r} = \bm{b} - A_0  \bm{w}$ denoting its algebraic residual. 
Pre- and post-relaxation on $A_0 \bm{w} \approx \bm{b}$ are built by composing F- and C-relaxation sweeps, which update $\bm{w}$ to set the resulting $\bm{r}$ to zero at F- and C-points, respectively.
%
F-relaxation can be thought of as time-stepping $\bm{w}$ from each C-point across the interval of $m-1$ F-points that follow it, called the CF-interval in the following, and C-relaxation can be thought of as time-stepping $\bm{w}$ from the last F-point in each CF-interval to its neighbouring C-point in the next CF-interval. 
F- and C-relaxations are local operations and are, therefore, highly parallelizable.
In this work, post-relaxation is fixed as a single F-relaxation, while we denote pre-relaxation by $\textrm{F(CF)}^{\nu}$, $\nu \in \mathbb{N}_0$, meaning a single F-relaxation followed by $\nu$ sweeps of CF-relaxation (each composed of a sweep of C-relaxation followed by one of F-relaxation).  Common choices are $\nu=0$ or $1$, written as F- and FCF-relaxation, respectively, but larger values of $\nu$ can also be considered.\cite{Gander_etal_2018,
Southworth_2019,
Hessenthaler_etal_2020}

After pre-relaxation, the coarse-grid correction step solves the reduced system $A_1 \bm{e}_{\Delta} = \bm{r}_{\Delta}$, which has a factor of $m$ fewer time points.
Here, $\bm{r}_{\Delta}$ is the fine-grid residual vector injected to the C-points,
\begin{align} \label{eq:A1_system}
A_1 
:=
\begin{bmatrix}
I & \\
-\Psi & I \\
& \ddots & \ddots \\
& & -\Psi & I
\end{bmatrix}
\approx
\begin{bmatrix}
I & \\
-\Phi^m & I \\
& \ddots & \ddots \\
& & -\Phi^m & I
\end{bmatrix}
\end{align}
is the coarse-grid operator with $\Psi$ denoting the \textit{coarse-grid time-stepping operator} that approximates $\Phi^m$, and $\bm{e}_{\Delta}$ is an approximate error at C-points.
In the two-grid setting analyzed in this paper, this reduced system is solved exactly via sequential time-stepping on the coarse grid, but in the multilevel setting it is solved approximately by recursively applying MGRIT, noting that $A_1$ has the same block bidiagonal structure as $A_0$ in \eqref{eq:A0_system}.
Upon computing the approximate error $\bm{e}_{\Delta}$, it is added to C-point values of $\bm{w}$, and then the post-relaxation is performed. 

When $\Psi = \Phi^m$, $A_1 \bm{e}_{\Delta} = \bm{r}_{\Delta}$ is exactly the C-point Schur complement of the residual equation $A_0 \bm{e} = \bm{r}$, in which $\bm{e} = \bm{u} - \bm{w}$ is the algebraic error, and MGRIT converges exactly in a single iteration; thus, we dub $\Phi^m$ the \textit{ideal coarse-grid time-stepping operator}.
No parallel speed-up can be obtained by using $\Psi = \Phi^m$, however, since the sequential coarse-grid solve is as expensive as solving the original fine-grid problem. 
The convergence of MGRIT is naturally determined by the accuracy of the approximation $\Psi \approx \Phi^m$, where one needs to strike a balance between the complexity or the cost of applying $\Psi$, and the accuracy of the approximation of $\Phi^m$.
In the PDE context, the most common approach for computing $\Psi$ is to rediscretize the fine-grid problem using the enlarged coarse-grid time step $m \delta t$, which tends to work well for diffusion-dominated problems. For advection-dominated problems, however, this typically leads to extremely poor convergence, for which we provide theoretical insight in \cref{sec:char_comp}, based on results derived in \cref{sec:error_prop,sec:LFA}.

\subsection{Assumptions and notation}
\label{sec:assumptions-notation}

The analysis in this paper relies on the following assumptions on the time-stepping operators.
\begin{assumption}[Simultaneous diagonalizability] \label{ass:Phi_Psi}
The fine- and coarse-grid time-stepping operators $\Phi \in \mathbb{R}^{n_x \times n_x}$ and $\Psi \in \mathbb{R}^{n_x \times n_x}$ are simultaneously diagonalizable by a unitary matrix ${\cal U}$,
\begin{align}
\label{eq:Phi_U}
\Phi = {\cal U} \diag \big( \lambda_1, \ldots, \lambda_{n_x} \big) {\cal U}^*,
\quad
\Psi = {\cal U} \diag \big( \mu_1, \ldots, \mu_{n_x} \big) {\cal U}^*,
\end{align}
with $\lambda_i$ and $\mu_i$ denoting the $i$th eigenvalue of $\Phi$ and $\Psi$, respectively.
\end{assumption}
Simultaneous diagonalizability allows the convergence analysis of MGRIT to be simplified immensely, and it has appeared in previous analyses for this reason. \cite{Friedhoff_MacLachlan2015,
Dobrev_etal_2017,
Southworth_2019,
Hessenthaler_etal_2020,
DeSterck_etal_2019}
We note that this assumption still permits a large class of model problems.
For example, suppose that after spatial semi-discretization the linear PDE $\frac{\partial u}{\partial t} = \mathcal{L}(u) + f$ becomes the ODE system $\frac{\d \bm{u}}{\d t} = L \bm{u} + \bm{f}$ with $L \in \mathbb{R}^{n_x \times n_x}$ a matrix with time-independent entries, and $\bm{f}$ a solution-independent source term.
Then if (potentially different) Runge-Kutta time integrators are used on the fine and coarse grids, $\Phi$ and $\Psi$ will be rational functions of $L$. 
If the matrix $L$ is normal (i.e., unitarily diagonalizable), then $\Phi$ and $\Psi$ will be simultaneously diagonalized by the unitary eigenvectors of $L$.
Examples of normal $L$ are those resulting from, for example, standard finite-difference spatial discretizations of constant-coefficient heat equations subject to periodic or Dirichlet boundary conditions.
Similarly, $L$ corresponding to typical spectral or finite-difference spatial discretizations of constant-coefficient linear advection problems would be normal so long as spatial boundaries are periodic, since in these cases $L$ would be circulant; if the advection problem is subject to inflow/outflow spatial boundaries, the unitary diagonalizability assumption is less likely to be satisfied, since such $L$ would typically not be diagonalizable, let alone unitarily diagonalizable (e.g., first- and second-order upwind finite differences result in triangular $L$).

In addition to the above assumption, we also place an assumption on the stability of $\Phi$ and $\Psi$, as follows. 
\begin{assumption}[Stability] \label{ass:stability}
The fine- and coarse-grid time-stepping operators are $\ell^2$-stable, with $\Vert \Phi \Vert_2, \Vert  \Psi \Vert_2 < 1$.
Or equivalently, $|\lambda_i| ,|\mu_i| < 1, \, \forall i \in \{1, \ldots, n_x\}$.
\end{assumption}
\begin{remark}[Unit eigenvalues] \label{rem:constant_mode}
By the above assumption, the LFA theory we present does not apply to non-dissipative time-stepping operators, i.e., those satisfying $|\lambda_{i_*}|, |\mu_{i_*}| \geq 1$ for at least one ${i_*} \in \{1, \ldots, n_x\}$. 
One technical caveat here is that the LFA does apply to pairs of non-dissipative time-stepping operators provided that they treat non-dissipated eigenmodes $i_*$ exactly the same, i.e., $( \lambda_{i_*} )^m = \mu_{i_*} \geq 1$.
The reason is that errors in the direction of these modes is exactly eliminated in one MGRIT iteration (see Section \ref{sec:error_prop_time-only} and then \cref{app:sm:lem:const_mode}, \cref{sm:lem:const_mode}), and, so, their error propagation need not be analyzed by our LFA theory, nor does their presence influence the correctness of the LFA on the remaining eigenmodes.
For example, later we consider PDEs with periodic spatial boundary conditions resulting in $( \lambda_{i_*} )^m = \mu_{i_*} =1$ with $i_*$ corresponding to the constant eigenvector.
\end{remark}

For our analysis, it is also necessary to introduce the lower shift matrix $L_n$, and the Vandermonde-style function $v \colon \mathbb{C}^{n \times n} \to \mathbb{C}^{mn \times n}$ defined by
\begin{align} \label{eq:Ln_v_defs}
L_n := 
\begin{bmatrix}
0 \\
1 & 0 \\
& \ddots & \ddots \\
&  & 1 & 0
\end{bmatrix}
\in \mathbb{R}^{n \times n},
\quad
v(X) 
:=
\begin{bmatrix}
I \\
X \\
\vdots \\
X^{m-1}
\end{bmatrix}.
\end{align}
%

\section{Error propagation}
\label{sec:error_prop}

Suppose $\bm{u}^{(p)}$ is the approximation to the solution of $A_0\bm{u}=\bm{b}$ after $p$ MGRIT iterations, and that $\bm{e}^{(p)} = \bm{u} - \bm{u}^{(p)}$ is the algebraic error in that approximation. Then, the error propagator ${\cal E}$ transforms the initial error $\bm{e}^{(0)}$ as
$
\bm{e}^{(p)} = {\cal E}^p \bm{e}^{(0)}.
$
Thus, the spectral properties of ${\cal E}^p$ characterize the convergence of the iteration. 
For example, the norm and spectral radius of ${\cal E}$ describe the short-term and asymptotic convergence behaviors of the method, respectively.
%
%
The error propagator for a two-grid multigrid method takes the form (see Trottenberg~et~al.,\cite{Trottenberg_etal_2001} Section 2.2.3)
\begin{align} \label{eq:E_standard}
{\cal E} = S_{\rm post} {\cal K} S_{\rm pre}, 
\quad \textrm{where }
{\cal K}
=
I - P A^{-1}_1 R A_0,
\end{align}
where $A_0, A_1$ represent the fine- and coarse-grid matrices, and $S_{\rm pre}$ and $S_{\rm post}$ denote error propagators for pre- and post relaxation, respectively.
The error propagator ${\cal K}$ is that of the coarse-grid correction, in which $R$ and $P$ denote the restriction and interpolation operators, respectively. 
We note that there are several equivalent formulations of the MGRIT error propagator.
We follow References~\citenum{DeSterck_etal_2019}, Section 4.1.3, \& \citenum{Friedhoff_MacLachlan2015}, by considering the propagator on the fine grid, although similar results can be obtained by considering only coarse-grid degrees-of-freedom (DOFs), as in References~\citenum{Dobrev_etal_2017,
Southworth_2019,
DeSterck_etal_2019,
Hessenthaler_etal_2020}.
Similarly, we take $P$ in the above to be injection interpolation, while other works use so-called ideal interpolation, in which the interpolation $P$ also incorporates the post-relaxation from the MGRIT iteration.\cite{Falgout_etal_2014,
Friedhoff_MacLachlan2015,
Gander_etal_2018,
Hessenthaler_etal_2020,
Howse_etal_2019,
Southworth_2019}

As noted above, and as described in~\cref{sec:MGRIT}, in this work we fix the interpolation and restriction operators in \eqref{eq:E_standard} as injection, writing $P$ for injection from the coarse-grid points into the fine grid, and $P^\top$ for its adjoint. We also fix the post-relaxation to be $S^{\rm F}$, with pre-relaxation as $\big( S^{\rm CF} \big)^{\nu} S^{\rm F}$ for some $\nu \in \mathbb{N}_0$, yielding
\begin{align} \label{eq:E_mgrit}
{\cal E} = S^{\rm F} {\cal K} \big( S^{\rm CF} \big)^{\nu} S^{\rm F}, 
\quad \textrm{where }
{\cal K}
=
I - P A^{-1}_1 P^\top A_0.
\end{align}
Expressions for $P$, $S^{\rm F}$, and $S^{\rm CF}$, are derived, for example, in References~\citenum{DeSterck_etal_2019,
Dobrev_etal_2017,
Friedhoff_MacLachlan2015,
Hessenthaler_etal_2020,
Southworth_2019}, so we omit them here, but include their derivations in \Cref{sm:sec:interp_and_relax}.

\subsection{Temporal MGRIT error propagation}
\label{sec:error_prop_time-only}

Since $\Phi$ and $\Psi$ are simultaneously unitarily diagonalizable (see \cref{ass:Phi_Psi}), their eigenvectors ${\cal U}$ can be used to transform \eqref{eq:E_mgrit} into a block diagonal matrix.
More specifically, one has
\begin{align} \label{eq:E_similarity_trans}
{\cal E}
\;
\underset{\rm unitary \, similarity\, transform} \mapsto
\;
\widecheck{{\cal E}}
=
\underset{1 \leq i \leq n_x}\diag \left( {\cal E}_i \right) \in \mathbb{C}^{n_x n_t \times n_x n_t}.
\end{align}
The unitary matrix involved in this transform can be written as the composition $(I_{n_t} \otimes {\cal U} ) {\cal P}$, in which ${\cal P}$ is a permutation matrix reordering space-time vectors from the original ordering where all spatial DOFs at a single time point are blocked together, to one in which all temporal DOFs belonging at a single spatial point are blocked together.\cite{Dobrev_etal_2017,
Friedhoff_MacLachlan2015,
DeSterck_etal_2019,
Hessenthaler_etal_2020,
Southworth_2019}
Since a unitary similarity transform of a matrix preserves its $\ell^2$-norm and eigenvalues, \eqref{eq:E_similarity_trans} simplifies the task of computing the norm and spectral radius of ${\cal E} \in \mathbb{C}^{n_x n_t \times n_x n_t}$ into one of computing norms and spectral radii of $n_x$ error propagators ${\cal E}_i \in \mathbb{C}^{n_t\times n_t}$.
The $i$th diagonal block in \eqref{eq:E_similarity_trans} takes the form
\begin{align} \label{eq:Ei_def}
\textrm{MGRIT error propagation matrix for mode $i$:} \quad
{\cal E}_i 
=
S^{\rm F}_i 
\left( 
I_{n_t}
-
P_i
A_{1,i}^{-1} 
P_i^\top
A_{0,i}
\right)
\big(
S^{\rm CF}_i 
\big)^{\nu}
S^{\rm F}_i 
\in \mathbb{C}^{n_t \times n_t},
\end{align}
and it characterizes the evolution of an error vector in the direction of the $i$th eigenvector of $\Phi$ and $\Psi$.
In other words, the effect of the similarity transform  \eqref{eq:E_similarity_trans} has been to decouple the space-time equation $\bm{e}^{(1)} = {\cal E} \bm{e}^{(0)}$ into $n_x$ time-only problems, with the $i$th such problem representing error propagation of the $i$th spatial mode or eigenvector.

The constituent terms in \eqref{eq:Ei_def} are found by applying the similarity transform from \eqref{eq:E_similarity_trans} to the components of ${\cal E}$ in \eqref{eq:E_mgrit}.
These spatial components can be written as
\begin{subequations}
\begin{alignat}{2}
\label{eq:A0i_def}
&\textrm{Fine-grid matrix for mode $i$:} \quad\quad\quad
A_{0,i}
&&= 
I_{n_t}
-
\lambda_i L_{n_t} \in \mathbb{C}^{n_t \times n_t},
\\
\label{eq:A1i_def}
&\textrm{Coarse-grid matrix for mode $i$:} \quad\quad
A_{1,i} 
&&= 
I_{n_t/m} 
-
\mu_i L_{n_t/m} \in \mathbb{C}^{n_t/m \times n_t/m},
\\
\label{eq:Pi_def}
&\textrm{Interpolation matrix for mode $i$:} \quad\quad
P_i 
&&=
I_{n_t/m} \otimes \bm{e}_1 \in \mathbb{C}^{n_t \times n_t/m}, 
\\
\label{eq:SFi_def}
&\textrm{F-relaxation matrix for mode $i$:} \quad\quad
S^{\rm F}_i 
&&= 
I_{n_t/m} \otimes \left[  v(\lambda_i) \bm{e}_1^\top \right] \in \mathbb{C}^{n_t \times n_t},
\\
\label{eq:SCFi_def}
&\textrm{CF-relaxation matrix for mode $i$:} \quad
S^{\rm CF}_i 
&&= 
L_{n_t/m} \otimes \left[ \lambda_i v(\lambda_i) \bm{e}_m^\top \right] \in \mathbb{C}^{n_t \times n_t}.
\end{alignat}
\end{subequations}
Here, $\bm{e}_1, \bm{e}_m \in \mathbb{R}^m$ are the canonical (column-oriented) basis vectors in the first and $m$th directions, respectively.
Recall the matrix $L_n$ and function $v$ are defined in \eqref{eq:Ln_v_defs}.

\section{Local Fourier analysis}
\label{sec:LFA}

In this section, LFA is used to analyze the temporal MGRIT error propagators ${\cal E}_i$ in \eqref{eq:Ei_def}.
The framework for the analysis is described in \Cref{sec:LFAintro}, key intermediate computations for the theory are given in \Cref{sec:LFAeigenmatrices}, and the main theoretical results are given in \Cref{sec:LFA-estimates}.
Note that the convergence theory developed in this section applies to general $\Phi$ and $\Psi$ satisfying the assumptions described earlier, and specific implications of this theory for advection-dominated PDEs will be described in \cref{sec:char_comp}.

\subsection{Introduction and preliminaries}
\label{sec:LFAintro}

To analyze the two-grid problem with LFA, we consider it posed on a pair of semi-infinite temporal grids, and we ignore the influence and effects of boundary conditions by replacing the initial condition at $t = 0$ and the outflow condition at the final time with a periodic boundary condition. 
To this end, with $\ell$ denoting the level in the multigrid hierarchy, we associate the semi-infinite temporal grids
\begin{align} \label{eq:G_ell}
\bm{G}_{\ell} 
\coloneqq
\left\{ t_k = k m^{\ell} \delta t \colon k \in \mathbb{N}_0 \right\}, \quad \ell \in \{0, 1\}.
\end{align}
On the grids $\bm{G}_{\ell}$, we consider the infinite-dimensional extension of matrix ${\cal E}_i$ in \eqref{eq:Ei_def} and of the matrices in \eqref{eq:A0i_def}--\eqref{eq:SCFi_def} that compose it. 
In addition, we consider the following Fourier modes on grids \eqref{eq:G_ell} with continuously varying frequency $\theta$:
\begin{align} \label{eq:Fourier_modes_def}
\varphi_{\ell}(\theta, t) \coloneqq \exp\left( \frac{\im \theta t}{m^{\ell} \delta t} \right), 
\quad t \in \bm{G}_{\ell}, 
\quad \theta \in \Theta_{\ell}, 
\end{align}
with
\begin{align} \label{eq:Theta_ell_def}
\Theta_{\ell} = 
\begin{cases}
\displaystyle{\left[-\frac{\pi}{m}, 2 \pi -\frac{\pi}{m} \right)}, \quad &\ell = 0, \\
\left[-\pi, \pi \right) \quad &\ell = 1.
\end{cases}
\end{align}
Any intervals of length $2 \pi$ could be used for $\Theta_{\ell}$, but these choices provide some notational simplifications.\cite{DeSterck_etal_2019} 
We adopt the shorthand $\bm{\varphi}_{\ell}(\theta)$ for denoting the vector that is populated with the Fourier mode \eqref{eq:Fourier_modes_def} sampled at all time points $t \in \bm{G}_{\ell}$.
The modes $\bm{\varphi}_{\ell}(\theta)$ lie at the heart of LFA because they are formally eigenfunctions of any infinite-dimensional Toeplitz operator that acts on the grid $\bm{G}_{\ell}$.\cite{Trottenberg_etal_2001,Wienands_Joppich_2005}

On $\bm{G}_{\ell}$, we introduce the scaled Hermitian inner product of two grid functions $a_{\ell}, b_{\ell} \colon \bm{G}_{\ell} \to \mathbb{C}$ as
\begin{align} \label{eq:inner_prod}
\langle \bm{a}_{\ell}, \bm{b}_{\ell} \rangle 
:= 
\lim \limits_{n_t \to \infty} \frac{1}{n_t} \sum_{k = 0}^{n_t-1} \bar{a}_{\ell} (t_k) b_{\ell} (t_k),
\end{align}
where $\bar{a}(t_k)$ denotes the complex conjugate of $a (t_k)$. 
Note that the Fourier modes \eqref{eq:Fourier_modes_def} are orthonormal with respect to this inner product.

We partition the frequency space $\Theta_{0}$ into two disjoint sets according to
\begin{align} \label{eq:Theta_low_def}
\Theta^{\rm low} \coloneqq \left[ -\frac{\pi}{m}, \frac{\pi}{m} \right), 
\quad  
\Theta^{\rm high} \coloneqq \left[ \frac{\pi}{m}, 2\pi - \frac{\pi}{m}\right).
\end{align}
Observe that for any $\theta \in \Theta^{\rm low}$, we have $\varphi_0 \left(\theta + \tfrac{2 \pi \alpha}{m}, t\right) = \tfrac{1}{\sqrt{m}} \varphi_1 \left(m \theta, t\right)$ for all $t \in \bm{G}_{1}$ and ${\alpha \in \{0, \ldots, m-1\}}$.
These $m$ fine-grid functions, $\varphi_0 \left(\theta + \tfrac{2 \pi \alpha}{m}, t\right)$, are known as \textit{harmonics} of one another. 
The fact that the harmonics are indistinguishable from one another when sampled on the coarse-grid points is the motivation for the partitioning in \eqref{eq:Theta_low_def} (see De~Sterck~et~al.,\cite{DeSterck_etal_2019} Section 4.1.2).
A second useful fact is that constant-stencil interpolation and restriction operators map the function $\varphi_1(m\theta,t)$ for $\theta\in\Theta^{\rm low}$ onto the associated harmonic functions and vice-versa, leading to the definition of the $m$-dimensional spaces of harmonics.
\begin{definition}[$m \delta t$-harmonics] 
\label{def:mh-harmonics}
For a given $\theta \in \Theta^{\rm low}$, the associated $m$-dimensional space of harmonics is 
\begin{align} \label{eq:mh-harmonics}
{\cal H}^{\theta}_{\delta t}
\coloneqq
\underset{0 \leq \alpha < m}{\rm span}
\left\{
\bm{\varphi}_0 \left(\theta + \frac{2 \pi \alpha}{m} \right) 
\right\}.
\end{align}
\end{definition}

As we will show, ${\cal H}^{\theta}_{\delta t}$ is also an invariant subspace of both C- and F-relaxation, although each Fourier mode is not an eigenfunction.  Thus, for a given $\theta \in \Theta^{\rm low}$, the action of the various MGRIT components \eqref{eq:A0i_def}--\eqref{eq:SCFi_def} on Fourier modes \eqref{eq:Fourier_modes_def} is characterized by
\begin{subequations}
\begin{alignat}{2}
\label{eq:A0i_map}
&\textrm{Fine-grid matrix for mode $i$:} \quad\quad\quad
A_{0,i}
&&\colon 
{\cal H}^{\theta}_{\delta t} \to {\cal H}^{\theta}_{\delta t},
\\
\label{eq:A1i_map}
&\textrm{Coarse-grid matrix for mode $i$:} \quad\quad
A_{1,i}
&&\colon 
{\rm span} \{{\varphi}_1(m \theta) \} \to {\rm span} \{{\varphi}_1(m \theta) \},
\\ 
\label{eq:Pi_map}
&\textrm{Interpolation matrix for mode $i$:} \quad\quad
P_i 
&&\colon 
{\rm span} \{{\varphi}_1(m \theta) \} \to {\cal H}^{\theta}_{\delta t},\\
&\textrm{F-relaxation matrix for mode $i$:} \quad\quad
S^{\rm F}_i 
&&\colon 
{\cal H}^{\theta}_{\delta t} \to {\cal H}^{\theta}_{\delta t},\\
\label{eq:SCFi_map}
&\textrm{CF-relaxation matrix for mode $i$:} \quad
S^{\rm CF}_i 
&&\colon 
{\cal H}^{\theta}_{\delta t} \to {\cal H}^{\theta}_{\delta t}.
\end{alignat}
\end{subequations}
Note that the spans in \eqref{eq:A1i_map} and \eqref{eq:Pi_map} are over a one-dimensional set.
Since the MGRIT error propagator ${\cal E}_i$ is composed of the above operators (see \eqref{eq:Ei_def}), it is invariant on the space of $m \delta t$-harmonics:
\begin{align}
\textrm{MGRIT error propagation matrix for mode $i$:} \quad
{\cal E}_i \colon {\cal H}^{\theta}_{\delta t} 
\to 
{\cal H}^{\theta}_{\delta t} 
\quad \textrm{for all } 
\theta \in \Theta^{\rm low}.
\end{align}
Therefore, by grouping together harmonic Fourier modes, the infinite-dimensional error propagator ${\cal E}_i$ can be block diagonalized. Specifically, the transformed operator has one diagonal block $\wh{{\cal E}}_i(\theta) \in \mathbb{C}^{m \times m}$ associated with each $\theta \in \Theta^{\rm low}$, where, for a given $\theta \in \Theta^{\rm low}$, $\wh{{\cal E}}_i(\theta)$ is the representation of ${\cal E}_i$ on the harmonic space ${\cal H}^{\theta}_{\delta t}$.
That is,
\begin{align} \label{eq:Ei_similarity}
{\cal E}_i 
\;
\underset{\rm unitary\, similarity\, transform} \mapsto
\;
\underset{\theta \in \Theta^{\rm low}} 
\diag 
\big( 
\wh{{\cal E}}_i(\theta)
\big).
\end{align}
We call $\wh{{\cal E}}_i(\theta)$ the \textit{Fourier symbol} of ${\cal E}_i$ associated with the harmonic space ${\cal H}^{\theta}_{\delta t}$ (or just the Fourier symbol of ${\cal E}_i$ for short).

Let $V(\theta)$ be a matrix with $m$ columns given by the $m$ harmonic Fourier modes from \eqref{eq:mh-harmonics}:
\begin{align} \label{eq:V_basis_def}
V(\theta)
:=
\begin{bmatrix}
\bm{\varphi}_0(\theta)
\quad
\bm{\varphi}_0 \left(\theta + \frac{2\pi}{m}  \right)
\quad
\ldots
\quad
\bm{\varphi}_0 \left(\theta + \frac{2\pi(m-1)}{m}  \right)
\end{bmatrix}.
\end{align}
Then, the Fourier symbol arising in the similarity transform \eqref{eq:Ei_similarity} associated with ${\cal H}^{\theta}_{\delta t}$ satisfies
$
{\cal E}_i V(\theta) 
=
V(\theta) 
\wh{{\cal E}}_i(\theta)
$.
Since $V$ has orthonormal columns, the Fourier symbol itself can be written
$
\wh{{\cal E}}_i(\theta)
=
V^*(\theta) {\cal E}_i V(\theta) \in \mathbb{C}^{m \times m}
$.
Since the $\ell^2$-norm and the eigenvalues of a matrix are preserved under a unitary similarity transform, from \eqref{eq:Ei_similarity} we have
\begin{align} 
\label{eq:Ei_norm_rho_equivalence}
\Vert {\cal E}_i \Vert_2
=
\underset{\theta \in \Theta^{\rm low}} \sup
\big\Vert 
\wh{{\cal E}}_i(\theta)
\big\Vert_2,
\quad
\rho ( {\cal E}_i )
=
\underset{\theta \in \Theta^{\rm low}} \sup
\rho \big(
\wh{{\cal E}}_i(\theta)
\big).
\end{align}
Thus, the computation of the norm and spectral radius of the infinite-dimensional ${\cal E}_i$ has been reduced to the computation of these quantities on an infinite number of finite-dimensional matrices, $\wh{{\cal E}}_i(\theta) \in \mathbb{C}^{m \times m}$, $\theta \in \Theta^{\rm low}$.

From the definition of ${\cal E}_i$ in \eqref{eq:Ei_def}, the error propagator $\wh{{\cal E}}_i(\theta)$ takes the form 
\begin{align}
\label{eq:Ei_eigen_components}
\textrm{Fourier symbol of MGRIT error propagation matrix for mode $i$:} \quad
\wh{{\cal E}}_i(\theta)
&=
\wh{S}^{\rm F}_i (\theta)
\,
\wh{{\cal K}}_i(\theta)
\,
\big(
\wh{S}^{\rm CF}_i (\theta)
\big)^{\nu}
\,
\wh{S}^{\rm F}_i (\theta)
\in 
\mathbb{C}^{m \times m},
\end{align}
where $\wh{{\cal K}}_i(\theta)$ is the Fourier symbol of the coarse-grid correction,
\begin{align}
\label{eq:Ki_eigen_components}
\wh{{\cal K}}_i(\theta)
&=
I_m
-
\wh{P}_i (\theta)
\,
\big[ \wh{A}_{1,i} (m \theta) \big]^{-1}
\,
\wh{P}_i^\top (\theta)
\,
\wh{A}_{0,i} (\theta)
\in 
\mathbb{C}^{m \times m}.
\end{align}
The component matrices used in \eqref{eq:Ei_eigen_components} and \eqref{eq:Ki_eigen_components} are the Fourier symbols of the infinite-dimensional extensions of the multigrid components defined in \eqref{eq:A0i_def}--\eqref{eq:SCFi_def}.
From \eqref{eq:A0i_map}--\eqref{eq:SCFi_map}, these Fourier symbols may be expressed as
\begin{subequations}
\begin{alignat}{2}
\label{eq:A0i_eigen_def}
&\textrm{Fourier symbol of fine-grid matrix for mode $i$:} \quad\quad\quad
\wh{A}_{0,i}(\theta)
&&=
V^*(\theta) {A}_{0,i} V(\theta) \in \mathbb{C}^{m \times m},\\
\label{eq:A1i_eigen_def}
&\textrm{Fourier symbol of coarse-grid matrix for mode $i$:} \quad\quad
\wh{A}_{1,i}(m \theta)
&&=
\bm{\varphi}_1^*(m \theta) {A}_{1,i} \bm{\varphi}_1(m \theta) \in \mathbb{C},\\
\label{eq:Pi_eigen_def}
&\textrm{Fourier symbol of interpolation matrix for mode $i$:} \quad\quad
\wh{P}_i (\theta)
&&=
V^*(\theta) {P}_i \bm{\varphi}_1(m \theta) \in \mathbb{C}^{m},\\
\label{eq:SFi_eigen_def}
&\textrm{Fourier symbol of F-relaxation matrix for mode $i$:} \quad\quad
\wh{S}^{\rm F}_i (\theta)
&&= 
V^*(\theta) {S}^{\rm F}_i V(\theta) \in \mathbb{C}^{m \times m},\\
\label{eq:SCFi_eigen_def}
&\textrm{Fourier symbol of CF-relaxation matrix for mode $i$:} \quad
\wh{S}^{\rm CF}_i (\theta)
&&= 
V^*(\theta) {S}^{\rm CF}_i V(\theta) \in \mathbb{C}^{m \times m}.
\end{alignat}
\end{subequations}
%

\subsection{Derivations of Fourier symbols}
\label{sec:LFAeigenmatrices}

Deriving expressions for the Fourier symbols \eqref{eq:Ei_eigen_components}--\eqref{eq:SCFi_eigen_def} is a straightforward, but tedious calculation. Here, we state the main results, but defer their proofs to \cref{app:eigenmatrix-proofs}.
We note that the LFA theory in De~Sterck~et~al.\cite{DeSterck_etal_2019} also derived Fourier symbols for MGRIT; however, De~Sterck~et~al.\cite{DeSterck_etal_2019} used both a different basis than that in~\eqref{eq:V_basis_def} and an alternative formulation of the error-propagation operator, so the expressions derived here do not match those there.
Furthermore, the analysis in De~Sterck~et~al.\cite{DeSterck_etal_2019} focused on numerical evaluation of the LFA convergence estimates, while the focus here will be to algebraically manipulate the Fourier symbols in order to give closed-form convergence estimates.

\subsubsection{Fourier symbols of fine- and coarse-grid operators, and interpolation}
\label{sec:LFAeigenmatrices_interp}

Since $A_{0,i}$ in \eqref{eq:A0i_def} and $A_{1,i}$ in \eqref{eq:A1i_def} are infinite-dimensional Toeplitz operators, the Fourier modes $\bm{\varphi}_{\ell}(\theta)$ are their eigenfunctions and, so, their Fourier symbols are simply
\begin{align}
\label{eq:A0and1i_eigen}
\wh{A}_{i,0}(\theta) 
=
\underset{0 \leq \alpha < m} \diag \left( \wt{A}_{0,i} \left(\theta + \frac{2 \pi \alpha}{m} \right) \right)
\in \mathbb{C}^{m \times m},
\quad
\wh{A}_{1,i}(m\theta) 
=
\wt{A}_{1,i}(m \theta) \in \mathbb{C}.
\end{align}
Here, $\wt{A}_{\ell,i}(\theta)$ is the eigenvalue or Fourier symbol of $A_{i}$ associated with the eigenfunction $\bm{\varphi}_{\ell}(\theta)$. 
Due to the lower bidiagonal structure of $A_{i}$, these Fourier symbols are given by
\begin{align} \label{eq:Ai_symbols}
\wt{A}_{0,i}(\theta) = 1 - \lambda_i e^{-\im \theta} \in \mathbb{C},
\quad
\wt{A}_{1,i}(m \theta) = 1 - \mu_i e^{-\im m \theta} \in \mathbb{C}.
\end{align}

We next consider the Fourier symbol of interpolation \eqref{eq:Pi_eigen_def}, which can be derived by considering its transpose, which is restriction by injection.
Recall that for any ${\theta \in \Theta^{\rm low}}$, $\varphi_0 \left(\theta + \tfrac{2 \pi \alpha}{m}, t\right) = \tfrac{1}{\sqrt{m}} \varphi_1 \left(m \theta, t\right)$ for time points $t \in \bm{G}_{1}$, where ${\alpha \in \{0, \ldots, m-1\}}$.
Thus, injection of the values of $\varphi_0 \left(\theta + \tfrac{2 \pi \alpha}{m}, t\right)$ at time points $t \in \bm{G}_{1}$ maps any fine-grid harmonic to the associated coarse-grid Fourier mode $\varphi_1 \left(m \theta, t\right)$, with its amplitude scaled by $\tfrac{1}{\sqrt{m}} $.
Thus, the Fourier symbol of injection restriction is $\tfrac{1}{\sqrt{m}} \bm{1}^\top$, where $\bm{1} \in \mathbb{R}^m$ denotes a column vector of ones.
Note that when interpolation is the adjoint of restriction, its Fourier symbol is the adjoint of that of restriction up to some constant scaling (see, e.g., Remark 4.4.3 in Trottenberg~et~al.\cite{Trottenberg_etal_2001}), with injection representing the case in which the scaling is unity. 
Therefore the Fourier symbol of injection interpolation is simply
\begin{align} \label{eq:Pi_eigen}
\wh{P}_i (\theta) = \frac{1}{\sqrt{m}} \bm{1}.
\end{align}
%

\subsubsection{Relaxation Fourier symbols}
\label{sec:LFAeigenmatrices_relax}

We now compute the relaxation Fourier symbols \eqref{eq:SFi_eigen_def} and \eqref{eq:SCFi_eigen_def}. 
These Fourier symbols are more complicated than those in the previous section because they intermix harmonics in a non-trivial way.

\begin{lemma}[F-relaxation Fourier symbol]
\label{lem:eigmat-F-relax}
The Fourier symbol \eqref{eq:SFi_eigen_def} of F-relaxation may be written as the following rank-1 matrix
\begin{align} \label{eq:SFi_eigen}
\wh{S}^{\rm F}_i (\theta) = c(\theta) 
\big[ \wh{A}_{0,i} (\theta) \big]^{-1}
\bm{1}
\bm{1}^\top,
\end{align}
in which $\wh{A}_{0,i} (\theta)$ is given in \eqref{eq:A0and1i_eigen}, and $c(\theta)$ is the function
\begin{align} \label{eq:c_def}
c(\theta) := \frac{1}{m} \left[1 - \big(\lambda_i e^{-\im \theta} \big)^m \right].
\end{align}
\end{lemma}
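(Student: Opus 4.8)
The plan is to compute the Fourier symbol directly from its definition $\wh{S}^{\rm F}_i(\theta) = V^*(\theta) S^{\rm F}_i V(\theta)$ in \eqref{eq:SFi_eigen_def}, working entry-by-entry. Writing $\theta_\alpha := \theta + 2\pi\alpha/m$ for the $m$ harmonic frequencies, and recalling that the columns of $V(\theta)$ in \eqref{eq:V_basis_def} are the modes $\bm{\varphi}_0(\theta_\alpha)$ which are orthonormal in the inner product \eqref{eq:inner_prod}, the $(\beta,\alpha)$ entry of the symbol is $\langle \bm{\varphi}_0(\theta_\beta), S^{\rm F}_i \bm{\varphi}_0(\theta_\alpha) \rangle$. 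The first step is to make the action of $S^{\rm F}_i$ explicit. Since $S^{\rm F}_i = I_{n_t/m} \otimes [v(\lambda_i) \bm{e}_1^\top]$ from \eqref{eq:SFi_def} is block diagonal over CF-intervals, on each interval it reads off the C-point value via $\bm{e}_1^\top$ and spreads it across the $m$ interval points with the weights $v(\lambda_i) = (1, \lambda_i, \ldots, \lambda_i^{m-1})^\top$. Indexing a fine-grid point as $t_k$ with $k = mj + r$, where $j$ is the CF-interval index and $r \in \{0, \ldots, m-1\}$ is the intra-interval position, and using $\varphi_0(\theta_\alpha, t_k) = e^{\im \theta_\alpha k}$, this gives the component $(S^{\rm F}_i \bm{\varphi}_0(\theta_\alpha))_{mj+r} = \lambda_i^r e^{\im \theta_\alpha m j}$.

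Next I would evaluate the inner product. Substituting the action above into \eqref{eq:inner_prod}, the defining sum factorizes into a sum over intervals $j$ and a sum over intra-interval positions $r$, schematically $\lim_{n_t \to \infty} \frac{1}{n_t} \sum_j e^{\im(\theta_\alpha - \theta_\beta)mj} \sum_{r=0}^{m-1} (\lambda_i e^{-\im \theta_\beta})^r$. The crucial observation, and the one that drives the whole result, is that the harmonic frequencies differ by integer multiples of $2\pi/m$, so that $(\theta_\alpha - \theta_\beta)m = 2\pi(\alpha - \beta)$ and hence $e^{\im(\theta_\alpha - \theta_\beta)mj} = 1$ for every integer $j$. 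The interval sum therefore collapses to $\tfrac{1}{m}$ in the limit, independently of $\alpha$ and $\beta$. The remaining intra-interval sum is a finite geometric series which, using $|\lambda_i| < 1$ from \cref{ass:stability} so that $\lambda_i e^{-\im \theta_\beta} \neq 1$, evaluates to $\big(1 - (\lambda_i e^{-\im \theta_\beta})^m\big) / \big(1 - \lambda_i e^{-\im \theta_\beta}\big)$.

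Finally I would assemble the pieces. Because $e^{-\im m \theta_\beta} = e^{-\im m\theta} e^{-\im 2\pi \beta} = e^{-\im m\theta}$, the numerator satisfies $1 - (\lambda_i e^{-\im \theta_\beta})^m = 1 - (\lambda_i e^{-\im \theta})^m = m\,c(\theta)$, which is the same for all $\beta$ and, crucially, does not depend on $\alpha$ at all. Thus every column of the symbol is identical, which is exactly the rank-one structure $\bm{1}\bm{1}^\top$ asserted in \eqref{eq:SFi_eigen}. The denominator $1 - \lambda_i e^{-\im \theta_\beta} = \wt{A}_{0,i}(\theta_\beta)$ is precisely the $\beta$th diagonal entry of $\wh{A}_{0,i}(\theta)$ from \eqref{eq:A0and1i_eigen}, so the $\beta$th row carries the factor $1/\wt{A}_{0,i}(\theta_\beta)$, assembling into the diagonal matrix $[\wh{A}_{0,i}(\theta)]^{-1}$. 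Collecting the scalar $c(\theta)$ then yields $\wh{S}^{\rm F}_i(\theta) = c(\theta) [\wh{A}_{0,i}(\theta)]^{-1} \bm{1}\bm{1}^\top$. The only genuinely delicate point in the argument is the handling of the semi-infinite inner product \eqref{eq:inner_prod} together with the recognition that the cross-harmonic phase $e^{\im(\theta_\alpha - \theta_\beta)mj}$ is identically one; everything downstream is the geometric-series bookkeeping already encoded in the definition \eqref{eq:c_def} of $c(\theta)$.
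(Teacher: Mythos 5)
Your proposal is correct and follows essentially the same route as the paper's proof: both compute the $(p,q)$ entry of $\wh{S}^{\rm F}_i(\theta)$ from the inner product \eqref{eq:SFi_eigen_def}, split the time index over CF-intervals, use the fact that harmonic frequencies differ by multiples of $2\pi/m$ so the interval sum collapses to $1/m$, evaluate the intra-interval geometric series, and conclude rank-one structure from the column-independence of the result. The only (cosmetic) difference is that you factorize the summand so the geometric ratio $\lambda_i e^{-\im\theta_\beta}$ appears directly in the row frequency, whereas the paper works with the column-frequency factor $\zeta_i(\theta,q)$ and observes the dependence migrate to the row index after simplification; your bookkeeping also makes explicit the $\lambda_i$ that is inadvertently dropped from the denominator in the paper's Equation~\eqref{eq:SFi_eigen_calc_end}.
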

\begin{proof}
See \Cref{sec:proof:eigmat-F-relax}.
\end{proof}

The fact that $\wh{S}^{\rm F}_i (\theta)$ is dense reflects that F-relaxation couples together the $m$ harmonics in ${\cal H}^{\theta}_{\delta t}$ (see \cref{def:mh-harmonics}).
This contrasts with simple relaxation methods typically used in the multigrid solution of Poisson problems, for example, for which the Fourier symbol of relaxation is diagonal (or block diagonal), representing that harmonics are not mixed (or partially mixed) by relaxation.\cite{Trottenberg_etal_2001}

Recall that F-relaxation updates F-point values to have zero residuals while leaving C-point values unchanged. 
Thus, after an initial F-relaxation successive F-relaxations have no effect since F-point residuals are already zero.
The following useful result shows that the Fourier symbol $\wh{S}^{\rm F}_i(\theta)$ for F-relaxation inherits this property.
\begin{corollary}[Idempotence of F-relaxation]
\label{cor:SF_idempotence}
The Fourier symbol $\wh{S}^{\rm F}_i (\theta)$ for F-relaxation, as given by \eqref{eq:SFi_eigen}, is idempotent:
$
\wh{S}^{\rm F}_i (\theta) \wh{S}^{\rm F}_i (\theta) = \wh{S}^{\rm F}_i (\theta).
$
Furthermore, $\wh{S}^{\rm F}_i (\theta)$ has a single eigenvalue of unity, and $m-1$ eigenvalues that are zero.
\end{corollary}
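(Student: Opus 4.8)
The plan is to exploit the rank-one structure of $\wh{S}^{\rm F}_i(\theta)$ and reduce the idempotence claim to a single scalar identity. From \eqref{eq:SFi_eigen} I would write $\wh{S}^{\rm F}_i(\theta) = \bm{a}\,\bm{1}^\top$ with the column vector $\bm{a} := c(\theta)\,[\wh{A}_{0,i}(\theta)]^{-1}\bm{1}$. Then $\wh{S}^{\rm F}_i(\theta)\,\wh{S}^{\rm F}_i(\theta) = \bm{a}\,(\bm{1}^\top\bm{a})\,\bm{1}^\top = (\bm{1}^\top\bm{a})\,\wh{S}^{\rm F}_i(\theta)$, because $\bm{1}^\top\bm{a}$ is a scalar. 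Hence idempotence holds if and only if $\bm{1}^\top\bm{a} = 1$, and the whole statement about squaring collapses to evaluating the quadratic form $\bm{1}^\top\bm{a} = c(\theta)\,\bm{1}^\top[\wh{A}_{0,i}(\theta)]^{-1}\bm{1}$.

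Next I would compute this scalar explicitly. Since $\wh{A}_{0,i}(\theta)$ is diagonal by \eqref{eq:A0and1i_eigen}, with entries $\wt{A}_{0,i}(\theta + 2\pi\alpha/m) = 1 - \lambda_i e^{-\im(\theta + 2\pi\alpha/m)}$ from \eqref{eq:Ai_symbols}, the inverse is diagonal and the quadratic form becomes $\bm{1}^\top[\wh{A}_{0,i}(\theta)]^{-1}\bm{1} = \sum_{\alpha=0}^{m-1}(1 - z\zeta^\alpha)^{-1}$, where I set $z := \lambda_i e^{-\im\theta}$ and $\zeta := e^{-2\pi\im/m}$ (so that $\zeta^\alpha$ runs over the $m$th roots of unity). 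The crux of the proof, and the step I expect to require the most care, is the root-of-unity summation identity $\sum_{\alpha=0}^{m-1}(1 - z\zeta^\alpha)^{-1} = m/(1 - z^m)$.

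To establish that identity I would use \cref{ass:stability}, which guarantees $|\lambda_i| < 1$ and hence $|z| < 1$, so each term admits the absolutely convergent geometric expansion $(1 - z\zeta^\alpha)^{-1} = \sum_{j \ge 0} z^j \zeta^{\alpha j}$. Summing over $\alpha$ and interchanging the (absolutely convergent) sums, the orthogonality relation $\sum_{\alpha=0}^{m-1}\zeta^{\alpha j} = m$ when $m \mid j$ and $0$ otherwise collapses the double sum to $m\sum_{l \ge 0} z^{ml} = m/(1 - z^m)$; alternatively one may read this off the partial-fraction decomposition of $1/(1 - z^m) = \prod_{\alpha}(1 - z\zeta^\alpha)$. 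Substituting back and invoking $c(\theta) = \tfrac{1}{m}\big(1 - (\lambda_i e^{-\im\theta})^m\big) = \tfrac{1}{m}(1 - z^m)$ from \eqref{eq:c_def} yields $\bm{1}^\top\bm{a} = \tfrac{1}{m}(1 - z^m)\cdot m/(1 - z^m) = 1$, which proves idempotence.

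Finally, for the eigenvalue count I would argue structurally. The matrix $\wh{S}^{\rm F}_i(\theta)$ is the product of the invertible diagonal matrix $[\wh{A}_{0,i}(\theta)]^{-1}$ with the rank-one matrix $\bm{1}\bm{1}^\top$, so it has rank one and therefore at least $m-1$ zero eigenvalues. Idempotence forces every eigenvalue into $\{0,1\}$, and for an idempotent matrix the number of unit eigenvalues equals its rank, which is one here; equivalently, the single nonzero eigenvalue equals the trace $\operatorname{tr}(\bm{a}\,\bm{1}^\top) = \bm{1}^\top\bm{a} = 1$. Hence $\wh{S}^{\rm F}_i(\theta)$ has exactly one eigenvalue equal to unity and $m-1$ eigenvalues equal to zero, as claimed.
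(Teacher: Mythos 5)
Your proof is correct and takes essentially the same route as the paper's: reduce idempotence via the rank-one structure to the scalar identity $\bm{1}^\top c(\theta)\big[\wh{A}_{0,i}(\theta)\big]^{-1}\bm{1} = 1$, evaluate that scalar using roots-of-unity orthogonality, and obtain the eigenvalue count from rank one plus idempotence. The only cosmetic difference is that the paper evaluates each entry of $c(\theta)\big[\wh{A}_{0,i}(\theta)\big]^{-1}\bm{1}$ as a \emph{finite} geometric sum (reusing \eqref{eq:geo_sum_identity} from the proof of \cref{lem:eigmat-F-relax}), whereas you expand $(1-z\zeta^\alpha)^{-1}$ as an \emph{infinite} series, which additionally invokes $|\lambda_i|<1$ from \cref{ass:stability}; both collapse by the same orthogonality relation, so the arguments are interchangeable.
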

\begin{proof}
See \cref{sec:proof:idempotence-of-F-relax}.
\end{proof}

Having identified the Fourier symbol for F-relaxation in \cref{lem:eigmat-F-relax}, we now consider the Fourier symbol for CF-relaxation and that of pre-relaxation as a whole.
\begin{lemma}[Pre-relaxation Fourier symbol]
\label{lem:eigmat-prerelax}
The Fourier symbol \eqref{eq:SCFi_eigen_def} for CF-relaxation may be expressed as
\begin{align} \label{eq:SCFi_eigen}
\wh{S}^{\rm CF}_i (\theta) 
=
\wh{S}^{\rm F}_i (\theta) 
\Big[
I - \wh{A}_{0,i}(\theta)
\Big],
\end{align}
where $\wh{S}^{\rm F}_i (\theta)$ is given by \eqref{eq:SFi_eigen}, and $\wh{A}_{0,i} (\theta)$ is given in \eqref{eq:A0and1i_eigen}.
Furthermore, the Fourier symbol for the entire pre-relaxation operator in \eqref{eq:Ei_eigen_components} may be expressed as
\begin{align} \label{eq:pre-relax_eigen}
\big(
\wh{S}^{\rm CF}_i (\theta)
\big)^{\nu}
\,
\wh{S}^{\rm F}_i (\theta)
=
\big( \lambda_i e^{- \im \theta} \big)^{m \nu} \, \wh{S}^{\rm F}_i (\theta), 
\quad 
\nu \in \mathbb{N}_0.
\end{align}
\end{lemma}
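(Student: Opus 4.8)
The plan is to establish the two displayed identities in sequence: first the closed form \eqref{eq:SCFi_eigen} for the CF-relaxation symbol, and then to use it---together with the idempotence and rank-one structure of the F-relaxation symbol from \cref{lem:eigmat-F-relax} and \cref{cor:SF_idempotence}---to collapse the full pre-relaxation symbol \eqref{eq:pre-relax_eigen}. For \eqref{eq:SCFi_eigen} I would use that CF-relaxation is, by definition, a C-sweep followed by an F-sweep, so that at the level of error propagators $S^{\rm CF}_i = S^{\rm F}_i S^{\rm C}_i$. Because each factor maps the harmonic space ${\cal H}^{\theta}_{\delta t}$ into itself, the Fourier symbols compose multiplicatively, giving $\wh{S}^{\rm CF}_i(\theta) = \wh{S}^{\rm F}_i(\theta)\,\wh{S}^{\rm C}_i(\theta)$. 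It then suffices to identify the C-relaxation symbol as $\wh{S}^{\rm C}_i(\theta) = I_m - \wh{A}_{0,i}(\theta)$, which I would obtain by projecting the tensor-structured $S^{\rm C}_i$ onto the basis $V(\theta)$ exactly as was done for F-relaxation, using the diagonal form of $\wh{A}_{0,i}(\theta)$ in \eqref{eq:A0and1i_eigen}. A direct alternative is to compute $\wh{S}^{\rm CF}_i(\theta) = V^*(\theta) S^{\rm CF}_i V(\theta)$ straight from the expression \eqref{eq:SCFi_def} and then factor the outcome into $\wh{S}^{\rm F}_i(\theta)\big[I_m - \wh{A}_{0,i}(\theta)\big]$.

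The second identity is the payoff, and it is short once the first is in hand. Write $S := \wh{S}^{\rm F}_i(\theta)$ and $A := \wh{A}_{0,i}(\theta)$, and recall from \eqref{eq:SFi_eigen} that $S = c(\theta)\,A^{-1}\bm{1}\bm{1}^\top$ with $A$ diagonal. By \eqref{eq:SCFi_eigen}, $\wh{S}^{\rm CF}_i(\theta) = S\,(I_m - A)$, so the quantity in \eqref{eq:pre-relax_eigen} is $\big(S(I_m-A)\big)^{\nu} S$. The crux is the single-sweep case. Using idempotence $SS = S$ from \cref{cor:SF_idempotence},
\begin{align*}
S(I_m - A)S = SS - SAS = S - SAS,
\end{align*}
and the rank-one form makes $SAS$ a scalar multiple of $S$: since $\bm{1}^\top A A^{-1}\bm{1} = \bm{1}^\top\bm{1} = m$,
\begin{align*}
SAS = c(\theta)^2\,A^{-1}\bm{1}\,\big(\bm{1}^\top A A^{-1}\bm{1}\big)\,\bm{1}^\top = m\,c(\theta)^2\,A^{-1}\bm{1}\bm{1}^\top = m\,c(\theta)\,S.
\end{align*}
Hence $S(I_m-A)S = \big(1 - m\,c(\theta)\big)S$, and substituting the definition \eqref{eq:c_def} of $c(\theta)$ gives $1 - m\,c(\theta) = \big(\lambda_i e^{-\im\theta}\big)^m$, so that $\wh{S}^{\rm CF}_i(\theta)\,\wh{S}^{\rm F}_i(\theta) = \big(\lambda_i e^{-\im\theta}\big)^m\,\wh{S}^{\rm F}_i(\theta)$.

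With the single-sweep identity and $\kappa := \big(\lambda_i e^{-\im\theta}\big)^m$ in place, I would finish \eqref{eq:pre-relax_eigen} by induction on $\nu$. The base case $\nu = 0$ is immediate, and the step follows from $(\wh{S}^{\rm CF}_i)^{\nu+1}\wh{S}^{\rm F}_i = \wh{S}^{\rm CF}_i\big[(\wh{S}^{\rm CF}_i)^{\nu}\wh{S}^{\rm F}_i\big] = \kappa^{\nu}\,\wh{S}^{\rm CF}_i\,\wh{S}^{\rm F}_i = \kappa^{\nu+1}\,\wh{S}^{\rm F}_i$, applying the inductive hypothesis and then the single-sweep identity. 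I expect the only real obstacle to be the first identity \eqref{eq:SCFi_eigen}: carefully tracking the coarse-grid shift $L_{n_t/m}$ and the canonical vectors $\bm{e}_1,\bm{e}_m$ appearing in \eqref{eq:SFi_def}--\eqref{eq:SCFi_def} through the projection onto $V(\theta)$ is the delicate bookkeeping. By contrast, once \eqref{eq:SCFi_eigen} and idempotence are available, the collapse to \eqref{eq:pre-relax_eigen} is purely algebraic and uses nothing beyond $AA^{-1}=I$ and the rank-one structure of $S$.
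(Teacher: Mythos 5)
Your treatment of the second identity \eqref{eq:pre-relax_eigen} is correct and is essentially the paper's own algebra: both arguments hinge on the rank-one form $\wh{S}^{\rm F}_i(\theta) = c(\theta)\big[\wh{A}_{0,i}(\theta)\big]^{-1}\bm{1}\bm{1}^\top$ together with the scalar identity $1 - m\,c(\theta) = \big(\lambda_i e^{-\im\theta}\big)^m$; the paper computes $\big[\wh{S}^{\rm CF}_i(\theta)\big]^{\nu}$ in closed form and then multiplies by $\wh{S}^{\rm F}_i(\theta)$, while you organize the same computation as an induction on $\nu$ via the single-sweep identity and the idempotence of \cref{cor:SF_idempotence}. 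That difference is cosmetic.

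The genuine gap is in your primary route to \eqref{eq:SCFi_eigen}. The decomposition $S^{\rm CF}_i = S^{\rm F}_i S^{\rm C}_i$ and the multiplicativity of symbols on the invariant space ${\cal H}^{\theta}_{\delta t}$ are both fine, but your identification $\wh{S}^{\rm C}_i(\theta) = I_m - \wh{A}_{0,i}(\theta)$ is false. That matrix is diagonal, so it would assert that C-relaxation does not mix harmonics; but C-relaxation acts only at C-points, where the $m$ harmonics are indistinguishable, so it necessarily couples them. Concretely, writing $S^{\rm C}_i = I - Q\,A_{0,i}$, where $Q$ is the diagonal indicator operator that keeps C-point entries and zeroes F-point entries, the Fourier symbol of $Q$ is $\tfrac{1}{m}\bm{1}\bm{1}^\top$ (not $I_m$), so that
\begin{align*}
\wh{S}^{\rm C}_i(\theta) = I_m - \tfrac{1}{m}\,\bm{1}\bm{1}^\top\,\wh{A}_{0,i}(\theta).
\end{align*}
For example, with $m = 2$ the image of $\bm{\varphi}_0(\theta)$ under C-relaxation is $\tfrac{1}{2}\big(1+\lambda_i e^{-\im\theta}\big)\bm{\varphi}_0(\theta) + \tfrac{1}{2}\big(\lambda_i e^{-\im\theta}-1\big)\bm{\varphi}_0(\theta+\pi)$, which is not a multiple of $\bm{\varphi}_0(\theta)$, contradicting a diagonal symbol. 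Your plan does survive this correction, but only because of a further cancellation you never invoke: since $\wh{S}^{\rm F}_i(\theta)\,\bm{1}\bm{1}^\top = m\,\wh{S}^{\rm F}_i(\theta)$, one gets $\wh{S}^{\rm F}_i(\theta)\,\wh{S}^{\rm C}_i(\theta) = \wh{S}^{\rm F}_i(\theta) - \wh{S}^{\rm F}_i(\theta)\,\wh{A}_{0,i}(\theta) = \wh{S}^{\rm F}_i(\theta)\big[I_m - \wh{A}_{0,i}(\theta)\big]$. In other words, \eqref{eq:SCFi_eigen} is an identity for the product, not a statement that $I_m - \wh{A}_{0,i}(\theta)$ is the symbol of the C-sweep. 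Your fallback of computing $V^*(\theta)S^{\rm CF}_iV(\theta)$ directly from \eqref{eq:SCFi_def} and factoring is exactly what the paper does---it finds $\big[\wh{S}^{\rm CF}_i(\theta)\big]_{p,q} = \zeta_i(\theta,q)\big[\wh{S}^{\rm F}_i(\theta)\big]_{p,q}$ with $\zeta_i(\theta,q) = 1 - \wt{A}_{0,i}\big(\theta + \tfrac{2\pi q}{m}\big)$, i.e.\ column scaling by the diagonal of $I_m - \wh{A}_{0,i}(\theta)$---but you deferred that computation as bookkeeping rather than carrying it out, so as written the first half of your proof rests on the incorrect C-relaxation symbol.
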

\begin{proof}
See \Cref{sec:proof:eigmat-prerelax}.
\end{proof}
%

\subsubsection{Error propagator Fourier symbol}
\label{sec:LFAeigenmatrices_error}

Having computed convenient representations for the Fourier symbols that compose the Fourier symbol for the error propagator in \eqref{eq:Ei_eigen_components}, we now compute this Fourier symbol itself.
\begin{theorem}[Error propagator Fourier symbol]
\label{thm:eigmat-error-prop}
The error propagator Fourier symbol \eqref{eq:Ei_eigen_components} may be written as
\begin{align} \label{eq:Ei_eigen}
\wh{{\cal E}}_i(\theta)
=
f(\theta)
\wh{S}^{\rm F}_i (\theta),
\end{align}
where $\wh{S}^{\rm F}_i (\theta)$ is the Fourier symbol for F-relaxation given in \eqref{eq:SFi_eigen}, and $f(\theta)$ is
\begin{align}
\label{eq:f_def}
f(\theta)
:=
\big( \lambda_i e^{-\im \theta} \big)^{m \nu}
\frac{\lambda_i^m - \mu_i}{e^{\im m \theta} - \mu_i}.
\end{align}
\end{theorem}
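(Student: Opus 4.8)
The goal is to simplify the composite Fourier symbol $\wh{{\cal E}}_i(\theta) = \wh{S}^{\rm F}_i \, \wh{{\cal K}}_i \, (\wh{S}^{\rm CF}_i)^{\nu} \, \wh{S}^{\rm F}_i$ into the rank-one form $f(\theta)\,\wh{S}^{\rm F}_i(\theta)$. The plan is to work from the inside out, absorbing the pre-relaxation factor first and then tackling the coarse-grid correction. By \cref{lem:eigmat-prerelax}, equation \eqref{eq:pre-relax_eigen}, the entire pre-relaxation block collapses to the scalar multiple $(\lambda_i e^{-\im\theta})^{m\nu}\,\wh{S}^{\rm F}_i(\theta)$. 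So I would first write
\begin{align*}
\wh{{\cal E}}_i(\theta)
=
\big(\lambda_i e^{-\im\theta}\big)^{m\nu}\,
\wh{S}^{\rm F}_i(\theta)\,\wh{{\cal K}}_i(\theta)\,\wh{S}^{\rm F}_i(\theta),
\end{align*}
pulling out the scalar prefactor that will eventually become the leading term of $f(\theta)$. The remaining task is to evaluate $\wh{S}^{\rm F}_i\,\wh{{\cal K}}_i\,\wh{S}^{\rm F}_i$.

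Next I would expand $\wh{{\cal K}}_i$ using \eqref{eq:Ki_eigen_components}, namely $\wh{{\cal K}}_i = I_m - \wh{P}_i\,[\wh{A}_{1,i}]^{-1}\,\wh{P}_i^\top\,\wh{A}_{0,i}$, and distribute. The first piece gives $\wh{S}^{\rm F}_i\,I_m\,\wh{S}^{\rm F}_i = \wh{S}^{\rm F}_i$ by the idempotence established in \cref{cor:SF_idempotence}. For the second piece I would substitute the known closed forms: $\wh{S}^{\rm F}_i = c(\theta)\,[\wh{A}_{0,i}]^{-1}\bm{1}\bm{1}^\top$ from \eqref{eq:SFi_eigen}, the interpolation symbol $\wh{P}_i = \tfrac{1}{\sqrt m}\bm{1}$ from \eqref{eq:Pi_eigen}, the scalar $\wh{A}_{1,i}(m\theta) = 1 - \mu_i e^{-\im m\theta}$ from \eqref{eq:Ai_symbols}, and the diagonal $\wh{A}_{0,i}$ from \eqref{eq:A0and1i_eigen}. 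The key simplifying observations are that $\wh{A}_{0,i}$ cancels against the $[\wh{A}_{0,i}]^{-1}$ sitting inside the rightmost $\wh{S}^{\rm F}_i$, and that the inner products of the form $\bm{1}^\top \wh{P}_i$ and $\wh{P}_i^\top \bm{1}$ collapse to scalars, since $\bm{1}^\top\bm{1} = m$. After these cancellations the entire second term should again be proportional to $[\wh{A}_{0,i}]^{-1}\bm{1}\bm{1}^\top$, i.e., proportional to $\wh{S}^{\rm F}_i$.

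The main obstacle — and the step deserving the most care — is bookkeeping the scalar factors that accumulate from $c(\theta)$, the $\tfrac{1}{\sqrt m}$ factors in $\wh{P}_i$, the $m$ from $\bm{1}^\top\bm{1}$, and crucially the quantity $\bm{1}^\top[\wh{A}_{0,i}]^{-1}\bm{1} = \sum_{\alpha=0}^{m-1} \big(1 - \lambda_i e^{-\im(\theta + 2\pi\alpha/m)}\big)^{-1}$. This geometric-type sum over harmonics must be evaluated in closed form; I expect it to simplify via the identity $\prod_{\alpha=0}^{m-1}(z - \omega^\alpha) = z^m - 1$ (roots of unity) applied to $\tfrac{1}{m}\sum_\alpha (1 - \lambda_i e^{-\im\theta}\omega^{-\alpha})^{-1}$, yielding something proportional to $[1 - (\lambda_i e^{-\im\theta})^m]^{-1}$, i.e., the reciprocal of $m\,c(\theta)$. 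This is precisely what allows the $c(\theta)$ factors to cancel cleanly. Once this sum is resolved, collecting the surviving scalar coefficients of $\wh{S}^{\rm F}_i$ should produce
\begin{align*}
\wh{S}^{\rm F}_i\,\wh{{\cal K}}_i\,\wh{S}^{\rm F}_i
=
\frac{\lambda_i^m - \mu_i}{e^{\im m\theta} - \mu_i}\,\wh{S}^{\rm F}_i,
\end{align*}
after using $\wt{A}_{1,i}(m\theta) = 1 - \mu_i e^{-\im m\theta}$ and factoring out $e^{-\im m\theta}$ to match the stated denominator. Reinstating the pulled-out prefactor $(\lambda_i e^{-\im\theta})^{m\nu}$ then gives exactly $f(\theta)$ as defined in \eqref{eq:f_def}, completing the argument.
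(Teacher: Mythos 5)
Your proposal is correct and follows essentially the same route as the paper's proof: collapse pre-relaxation to the scalar $(\lambda_i e^{-\im\theta})^{m\nu}$ via \cref{lem:eigmat-prerelax}, expand $\wh{{\cal K}}_i(\theta)$ using $\wh{P}_i(\theta)=\tfrac{1}{\sqrt{m}}\bm{1}$, exploit the rank-one form of $\wh{S}^{\rm F}_i(\theta)$ together with \cref{cor:SF_idempotence}, and finish with the scalar identity $1 - m c(\theta)\big[\wh{A}_{1,i}(m\theta)\big]^{-1} = \frac{\lambda_i^m-\mu_i}{e^{\im m\theta}-\mu_i}$. One harmless inaccuracy: the harmonic sum $\bm{1}^\top\big[\wh{A}_{0,i}(\theta)\big]^{-1}\bm{1}$ that you flag as the main obstacle never actually appears in the coarse-grid-correction term, since $\wh{A}_{0,i}(\theta)$ cancels directly against the $\big[\wh{A}_{0,i}(\theta)\big]^{-1}$ inside the rightmost $\wh{S}^{\rm F}_i(\theta)$, leaving only $\bm{1}^\top\bm{1}=m$; that sum is needed only inside the proof of \cref{cor:SF_idempotence}, which you already invoke, and there it indeed equals $1/c(\theta)$ exactly as you predict.
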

\begin{proof}
See \cref{sec:proof:eigmat-error-prop}.
\end{proof}

\subsection{LFA convergence estimates}
\label{sec:LFA-estimates}

In this section, we present our main theoretical results on the LFA estimates for error propagation, which are based on the \textit{simple} representation of the error propagator Fourier symbol given in \Cref{thm:eigmat-error-prop}.
We begin with the norm of this matrix.
\begin{theorem}[Error propagator Fourier symbol norm]
\label{thm:Ei_eigen_norm}
The $\ell^2$-norm of the Fourier symbol for the error propagator given in \cref{thm:eigmat-error-prop} is 
\begin{align}
\label{eq:Ei_eigen_norm}
\big\Vert
\wh{{\cal E}}_i(\theta)
\big\Vert_2
=
|\lambda_i|^{m \nu}
\frac{|\lambda_i^m - \mu_i|}{|e^{\im m \theta} - \mu_i|}
\sqrt{ \frac{1 - |\lambda_i|^{2m}}{1 - |\lambda_i|^2} }.
\end{align}
\end{theorem}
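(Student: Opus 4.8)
The plan is to start from the factored representation $\wh{{\cal E}}_i(\theta) = f(\theta)\,\wh{S}^{\rm F}_i(\theta)$ supplied by \cref{thm:eigmat-error-prop} (equation~\eqref{eq:Ei_eigen}), so that the norm splits multiplicatively as $\big\Vert\wh{{\cal E}}_i(\theta)\big\Vert_2 = |f(\theta)|\,\big\Vert\wh{S}^{\rm F}_i(\theta)\big\Vert_2$. The scalar factor is immediate: since $|e^{-\im\theta}| = 1$, the definition~\eqref{eq:f_def} gives $|f(\theta)| = |\lambda_i|^{m\nu}\,|\lambda_i^m - \mu_i|\,/\,|e^{\im m\theta} - \mu_i|$, which already accounts for the first two factors of~\eqref{eq:Ei_eigen_norm}. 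It therefore remains only to show that $\big\Vert\wh{S}^{\rm F}_i(\theta)\big\Vert_2 = \sqrt{(1 - |\lambda_i|^{2m})/(1 - |\lambda_i|^2)}$.

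Next I would exploit the rank-one structure from~\eqref{eq:SFi_eigen}: writing $\bm{u} := c(\theta)\,[\wh{A}_{0,i}(\theta)]^{-1}\bm{1}$, we have $\wh{S}^{\rm F}_i(\theta) = \bm{u}\,\bm{1}^\top$, and the spectral norm of such a rank-one matrix is $\Vert\bm{u}\Vert_2\,\Vert\bm{1}\Vert_2 = \sqrt{m}\,\Vert\bm{u}\Vert_2$. Because $\wh{A}_{0,i}(\theta)$ is the diagonal matrix of~\eqref{eq:A0and1i_eigen} with entries $\wt{A}_{0,i}(\theta + 2\pi\alpha/m) = 1 - \lambda_i e^{-\im(\theta + 2\pi\alpha/m)}$, and recalling $|c(\theta)|^2$ from~\eqref{eq:c_def}, this reduces the task to the scalar sum $\Vert\bm{u}\Vert_2^2 = |c(\theta)|^2\sum_{\alpha=0}^{m-1}|1 - \lambda_i e^{-\im(\theta + 2\pi\alpha/m)}|^{-2}$.

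The crux is evaluating this sum in closed form, and I expect this to be the main obstacle. Setting $z := \lambda_i e^{-\im\theta}$ and $\omega := e^{-2\pi\im/m}$, the diagonal entries become $1 - z\omega^\alpha$ and $|c(\theta)|^2 = |1 - z^m|^2/m^2$, so everything hinges on the identity
\begin{align*}
\sum_{\alpha=0}^{m-1}\frac{1}{|1 - z\omega^\alpha|^2} = \frac{m\,(1 - |z|^{2m})}{(1 - |z|^2)\,|1 - z^m|^2}.
\end{align*}
My preferred route invokes $|z| = |\lambda_i| < 1$ from the stability assumption (\cref{ass:stability}) to expand each factor as a convergent geometric series, $1/(1 - z\omega^\alpha) = \sum_{j\ge 0}(z\omega^\alpha)^j$ together with its conjugate, yielding $|1 - z\omega^\alpha|^{-2} = \sum_{j,k\ge 0} z^j\bar z^k\,\omega^{\alpha(j-k)}$. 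Summing over $\alpha$ and applying the root-of-unity filter $\sum_{\alpha=0}^{m-1}\omega^{\alpha(j-k)} = m$ when $j \equiv k \pmod m$ and $0$ otherwise, the surviving terms group by a common residue $r \in \{0,\dots,m-1\}$ and nonnegative multiples of $m$, factoring as $m\,\big(\sum_{r=0}^{m-1}|z|^{2r}\big)\,\big|\sum_{a\ge 0}z^{am}\big|^2 = m\,\tfrac{1 - |z|^{2m}}{1 - |z|^2}\,\tfrac{1}{|1 - z^m|^2}$, which is precisely the claimed identity. (An alternative uses the factorization $1 - z^m = \prod_{\alpha}(1 - z\omega^\alpha)$ with a partial-fraction expansion, but the geometric-series argument is cleaner and makes the stability hypothesis do the work.)

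Finally, substituting this identity gives $\Vert\bm{u}\Vert_2^2 = \tfrac{|1 - z^m|^2}{m^2}\cdot\tfrac{m(1 - |z|^{2m})}{(1 - |z|^2)|1 - z^m|^2} = \tfrac{1 - |z|^{2m}}{m(1 - |z|^2)}$, so that $\big\Vert\wh{S}^{\rm F}_i(\theta)\big\Vert_2 = \sqrt{m}\,\Vert\bm{u}\Vert_2 = \sqrt{(1 - |\lambda_i|^{2m})/(1 - |\lambda_i|^2)}$ upon using $|z| = |\lambda_i|$. Multiplying by $|f(\theta)|$ computed in the first step then delivers~\eqref{eq:Ei_eigen_norm}.
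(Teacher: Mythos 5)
Your proof is correct and follows the same overall strategy as the paper's: factor out the scalar $f(\theta)$ so that $\big\Vert\wh{{\cal E}}_i(\theta)\big\Vert_2 = |f(\theta)|\,\big\Vert\wh{S}^{\rm F}_i(\theta)\big\Vert_2$, exploit the rank-one structure $\wh{S}^{\rm F}_i(\theta) = \bm{u}\bm{1}^\top$ with $\bm{u} = c(\theta)\big[\wh{A}_{0,i}(\theta)\big]^{-1}\bm{1}$, and reduce everything to evaluating $\Vert\bm{u}\Vert_2^2$ by root-of-unity orthogonality. (Your use of the rank-one identity $\Vert\bm{u}\bm{1}^\top\Vert_2 = \Vert\bm{u}\Vert_2\Vert\bm{1}\Vert_2$ is a slightly cleaner shortcut than the paper's hand computation of $\rho\big((\wh{S}^{\rm F}_i(\theta))^*\wh{S}^{\rm F}_i(\theta)\big)$, but the two amount to the same reduction.) Where you genuinely diverge is the central summation. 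The paper first folds $c(\theta)$ into the entries of $\bm{u}$: since $(z\omega^p)^m = z^m$ for $z = \lambda_i e^{-\im\theta}$ and $\omega = e^{-2\pi\im/m}$, each entry is the \emph{finite} geometric sum $\tfrac{1}{m}\sum_{r=0}^{m-1}(z\omega^p)^r$ (this is \eqref{eq:geo_sum_identity}); the squared norm is then a finite triple sum in which the filter $\sum_{p}\omega^{p(s-r)}$ collapses to $m\delta_{r,s}$ because $|s-r|<m$, yielding $\tfrac{1}{m}\sum_{r}|\lambda_i|^{2r}$ through purely finite algebra (stability enters only when this finite sum is written in closed form, which merely needs $|\lambda_i|\neq 1$). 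You instead keep $|c(\theta)|^2$ outside and evaluate $\sum_{\alpha}|1-z\omega^\alpha|^{-2}$ by expanding each factor as an \emph{infinite} geometric series, which requires $|\lambda_i|<1$ both for convergence and for interchanging the $\alpha$-sum with the double series (absolute convergence---worth stating explicitly); the surviving residue classes $j\equiv k \pmod{m}$ then factor as $\big(\sum_{r}|z|^{2r}\big)\big|\sum_{a\ge 0}z^{am}\big|^2$, and the resulting $|1-z^m|^{-2}$ cancels against $|c(\theta)|^2$. Both routes are sound: the paper's stays within finite sums and so its intermediate steps remain valid up to the boundary $|\lambda_i|\to 1$, while yours trades that for a tidy standalone summation identity whose proof makes the stability hypothesis of \cref{ass:stability} do the work.
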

\begin{proof}
Using \eqref{eq:Ei_eigen}, the squared norm of $\wh{{\cal E}}_i(\theta)$ can be expressed as
$
\big\Vert
\wh{{\cal E}}_i(\theta)
\big\Vert_2^2
=
\rho \big(
\wh{{\cal E}}_i^*(\theta)
\wh{{\cal E}}_i(\theta)
\big)
=
|f(\theta)|^2
\rho
\Big( 
\big( \wh{S}^{\rm F}_i (\theta) \big)^*
\wh{S}^{\rm F}_i (\theta)
\Big).
$
Substituting $\wh{S}^{\rm F}_i (\theta) = c(\theta) \big[ \wh{A}_{0,i}(\theta)\big]^{-1} \bm{1} \bm{1}^{\top}$ from \eqref{eq:SFi_eigen}, this becomes
\begin{align} 
\big\Vert
\wh{{\cal E}}_i(\theta)
\big\Vert_2^2
&=
|f(\theta)|^2
\rho\Big(
\Big[
\bm{1} \bm{1}^\top \bar{c}(\theta) \big[ \wh{A}^*_{0,i}(\theta) \big]^{-1}
\Big]
\Big[
c(\theta) \big[ \wh{A}_{0,i}(\theta) \big]^{-1}
\bm{1} \bm{1}^\top
\Big]
\Big),\\
\label{eq:Ei_eigen_norm_square_intermediate0}
&=
|f(\theta)|^2
\rho\Big(
\Big[
\bm{1}^\top \bar{c}(\theta) \big[ \wh{A}^*_{0,i}(\theta) \big]^{-1}
c(\theta) \big[ \wh{A}_{0,i}(\theta) \big]^{-1}
\bm{1}
\Big]
\bm{1} \bm{1}^\top
\Big),\\
\label{eq:Ei_eigen_norm_square_intermediate}
&=
m
|f(\theta)|^2
\,
\Big(
\bm{1}^\top \bar{c}(\theta) \big[ \wh{A}^*_{0,i}(\theta) \big]^{-1}
c(\theta) \big[ \wh{A}_{0,i}(\theta) \big]^{-1}
\bm{1}
\Big).
\end{align}
To arrive at \eqref{eq:Ei_eigen_norm_square_intermediate}, note that the term in the closed parenthesis multiplying $\bm{1} \bm{1}^\top$ in \eqref{eq:Ei_eigen_norm_square_intermediate0} is a non-negative scalar, and that $\rho(\bm{1} \bm{1}^\top) = \bm{1}^\top \bm{1} = m$.
Now we consider the inner product in \eqref{eq:Ei_eigen_norm_square_intermediate}.
Using the identity of \eqref{eq:geo_sum_identity} for the $p$th element, $p \in \{0, \ldots, m-1\}$, of $c(\theta) \big[ \wh{A}_{0,i}(\theta) \big]^{-1}
\bm{1}$, we have
\begin{align} \label{eq:geo_identity_real}
\Big[ c(\theta) \big[ \wh{A}_{0,i}(\theta) \big]^{-1}
\bm{1} \Big]_p
&=
\frac{1}{m}
\sum
\limits_{r = 0}^{m-1}
\big[
\lambda_i 
\exp\big( -\im \big( \theta + \tfrac{2\pi p}{m} \big) \big)
\big]^r.
\end{align}
Using \eqref{eq:geo_identity_real}, the inner product of concern can thus be written as
\begin{align}
\nonumber
\bm{1}^\top \bar{c}(\theta) \big[ \wh{A}^*_{0,i}(\theta) \big]^{-1}
c(\theta) \big[ \wh{A}_{0,i}(\theta) \big]^{-1}
\bm{1}
&=
\sum \limits_{p = 0}^{m-1}
\Bigg(
\Big[ \bm{1}^\top \bar{c}(\theta) \big[ \wh{A}^*_{0,i}(\theta) \big]^{-1}
\Big]_p
\Big[ c(\theta) \big[ \wh{A}_{0,i}(\theta) \big]^{-1}
\bm{1} \Big]_p
\Bigg),
\\
&=
\frac{1}{m^2}
\sum \limits_{p = 0}^{m-1}
\Bigg(
\sum \limits_{r = 0}^{m-1}
\big[
\lambda_i 
\exp\big( -\im \big( \theta + \tfrac{2\pi p}{m} \big) \big)
\big]^r
\sum \limits_{s = 0}^{m-1}
\big[
\bar{\lambda}_i 
\exp\big( \im \big( \theta + \tfrac{2\pi p}{m} \big) \big)
\big]^s
\Bigg), 
\\
&=
\frac{1}{m^2}
\sum \limits_{p = 0}^{m-1}
\sum \limits_{r = 0}^{m-1}
\sum \limits_{s = 0}^{m-1}
\lambda_i^r \bar{\lambda}_i^s
e^{\im  \theta (s - r )}
\exp \Big(  \tfrac{2\pi \im p(s - r)}{m} \Big),
\\
\label{eq:inner_prod_intermediate}
&= 
\frac{1}{m^2}
\sum \limits_{r = 0}^{m-1}
\sum \limits_{s = 0}^{m-1}
\lambda_i^r \bar{\lambda}_i^s
e^{\im  \theta (s-r)}
\bigg(
\sum \limits_{p = 0}^{m-1}
\exp\Big(  \tfrac{2\pi \im p(s - r)}{m} \Big)
\bigg).
\end{align}
Slightly rewriting the geometric sum in parentheses in \eqref{eq:inner_prod_intermediate} it becomes
\begin{align} \label{eq:trailing_geo_sum}
\sum \limits_{p = 0}^{m-1}
\Big[\exp\Big(  \tfrac{2\pi \im (s - r)}{m} \Big)\Big]^p
=
\begin{cases}
m, \quad \textrm{if } (s-r) \bmod m = 0,\\
0, \quad \textrm{else}.
\end{cases}
\end{align}
In \eqref{eq:inner_prod_intermediate}, $r,s \in \{0, \ldots, m-1\}$, and, thus, $s-r \in \{ 1-m, \ldots, m-1\}$. 
Therefore, the only time that $s-r$ is an integer multiple of $m$ is when $s-r = 0$, or $r = s$. 
As such, \eqref{eq:trailing_geo_sum} is equal to $m \delta _{r,s}$, where $\delta$ is the Kronecker delta function.
Substituting this result into \eqref{eq:inner_prod_intermediate} gives
\begin{align} \label{eq:inner_product_in_norm}
\bm{1}^\top \bar{c}(\theta) \big[ \wh{A}^*_{0,i}(\theta) \big]^{-1}
c(\theta) \big[ \wh{A}_{0,i}(\theta) \big]^{-1}
\bm{1}
=
\frac{1}{m^2}
\sum \limits_{r = 0}^{m-1}
\sum \limits_{s = 0}^{m-1}
\lambda_i^r \bar{\lambda}_i^s
e^{\im  \theta (s-r)} m \delta_{r,s}
=
\frac{1}{m}
\sum \limits_{r = 0}^{m-1}
\lambda_i^r \bar{\lambda}_i^r
=
\frac{1}{m} \frac{1 - |\lambda_i|^{2m}}{1 - |\lambda_i|^2},
\end{align}
with the final equality following from the prior expression being a geometric sum in $|\lambda_i|^2$.

The claimed result \eqref{eq:Ei_eigen_norm} follows by substituting \eqref{eq:inner_product_in_norm} into \eqref{eq:Ei_eigen_norm_square_intermediate} along with the value of $|f(\theta)|^2$ coming from \eqref{eq:f_def}, and then taking the square root of the result. 
\end{proof}

Recall from \eqref{eq:Ei_norm_rho_equivalence} that the norm of ${\cal E}_i$ is given by the maximum norm of its Fourier symbols $\wh{{\cal E}}_i(\theta)$ over $\theta \in \Theta^{\rm low}$.
\begin{theorem}[Error propagator norm]
\label{thm:Ei_norm}
The $\ell^2$-norm of the error propagator ${\cal E}_i$ defined in \eqref{eq:Ei_def} is
\begin{align}
\label{eq:Ei_norm}
\Vert
{{\cal E}}_i
\Vert_2
=
\underset{\theta \in \Theta^{\rm low}} \sup
\big\Vert
\wh{{\cal E}}_i(\theta)
\big\Vert_2
=
|\lambda_i|^{m \nu}
\frac{|\lambda_i^m - \mu_i|}{1 - |\mu_i|}
\sqrt{ \frac{1 - |\lambda_i|^{2m}}{1 - |\lambda_i|^2} }.
\end{align}
Furthermore, of all harmonic spaces ${\cal H}_{\delta t}^{\theta}$ with $\theta \in \Theta^{\rm low}$ from \cref{def:mh-harmonics}, the one with the least error reduction is that associated with frequency $\theta = \theta^{\dagger}_i$, where 
\begin{align} \label{eq:theta_dagger}
\theta^{\dagger}_i := \underset{\theta \in \Theta^{\rm low}}\argmax \,
\big\Vert
\wh{{\cal E}}_i(\theta)
\big\Vert_2
=
\frac{1}{m} \arg \mu_i,
\end{align}
in which $\arg \mu_i$ denotes the argument of the complex number $\mu_i$.
\end{theorem}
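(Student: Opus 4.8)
The plan is to start from the closed-form expression for $\|\widehat{\cal E}_i(\theta)\|_2$ in \cref{thm:Ei_eigen_norm} together with the norm--supremum equivalence \eqref{eq:Ei_norm_rho_equivalence}, and then carry out the resulting single-variable maximization over $\theta \in \Theta^{\rm low}$ explicitly. The crucial structural observation is that in \eqref{eq:Ei_eigen_norm} every factor is independent of $\theta$ except $|e^{\im m \theta} - \mu_i|^{-1}$. Hence
$$
\sup_{\theta \in \Theta^{\rm low}} \big\Vert \wh{{\cal E}}_i(\theta) \big\Vert_2 = |\lambda_i|^{m\nu} \, |\lambda_i^m - \mu_i| \sqrt{\frac{1 - |\lambda_i|^{2m}}{1 - |\lambda_i|^2}} \cdot \sup_{\theta \in \Theta^{\rm low}} \frac{1}{|e^{\im m \theta} - \mu_i|},
$$
so the entire problem reduces to minimizing $g(\theta) := |e^{\im m \theta} - \mu_i|$ over $\theta \in \Theta^{\rm low}$.

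Next I would substitute $\phi = m\theta$ and note that as $\theta$ ranges over $\Theta^{\rm low} = [-\pi/m, \pi/m)$, the variable $\phi$ ranges over $[-\pi, \pi)$, so that $e^{\im \phi}$ traverses the entire unit circle exactly once. Minimizing $g$ is therefore the geometric problem of finding the point of the unit circle closest to the fixed interior point $\mu_i$, which lies strictly inside the disc since $|\mu_i| < 1$ by \cref{ass:stability}. Expanding $g(\theta)^2 = 1 + |\mu_i|^2 - 2|\mu_i| \cos(\arg \mu_i - m\theta)$ makes this rigorous: the cosine is maximized, and hence $g$ minimized, precisely when $m\theta = \arg \mu_i$, i.e.\ at $\theta = \tfrac{1}{m} \arg \mu_i$. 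Because $\arg \mu_i$ denotes the principal argument lying in $[-\pi, \pi)$, this minimizer indeed lies in $\Theta^{\rm low}$, and it is exactly the claimed $\theta^\dagger_i$; at this point $g(\theta^\dagger_i)^2 = (1 - |\mu_i|)^2$, so $g(\theta^\dagger_i) = 1 - |\mu_i|$.

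Finally, I would substitute the minimum value $|e^{\im m \theta^\dagger_i} - \mu_i| = 1 - |\mu_i|$ back into \eqref{eq:Ei_eigen_norm}, which immediately yields the stated formula \eqref{eq:Ei_norm}; and since the supremum over $\theta$ is attained at $\theta^\dagger_i$, this same frequency is the $\argmax$ in \eqref{eq:theta_dagger} and labels the harmonic space ${\cal H}^{\theta}_{\delta t}$ that is reduced least. I do not expect any serious obstacle here: the argument is an elementary single-variable optimization once the $\theta$-independent factors have been pulled out. The only points demanding a little care are (i) confirming that the range of $\phi = m\theta$ is the full period $[-\pi, \pi)$, so that the global minimum of $g$ on the circle is actually accessible within $\Theta^{\rm low}$, and (ii) checking that the principal value of $\arg \mu_i$ places $\theta^\dagger_i$ inside the half-open interval rather than at an excluded endpoint; both are handled by the explicit interval bookkeeping above.
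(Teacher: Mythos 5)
Your proposal is correct and takes essentially the same route as the paper's proof: both isolate the sole $\theta$-dependent factor $|e^{\im m\theta}-\mu_i|^{-1}$ in \eqref{eq:Ei_eigen_norm}, rescale $\vartheta = m\theta$ so the problem becomes minimizing the distance from the unit circle to the interior point $\mu_i$ (attained at the point with argument $\arg\mu_i$, giving the value $1-|\mu_i|$), and substitute back. Your law-of-cosines expansion of $|e^{\im m\theta}-\mu_i|^2$ simply makes explicit the ``simple geometric argument'' the paper invokes, and your endpoint bookkeeping for the half-open interval is a minor refinement of the same argument.
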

\begin{proof}
The first equality in \eqref{eq:Ei_norm} was already given as \eqref{eq:Ei_norm_rho_equivalence}.
Let us first consider the slowest converging harmonic space, and then return to the second equality in \eqref{eq:Ei_norm}.
From the expression for $ \big\Vert\wh{{\cal E}}_i(\theta) \big\Vert_2$ given in \eqref{eq:Ei_eigen_norm}, the only dependence on frequency $\theta$ is via the term $\tfrac{1}{|e^{\im m \theta} - \mu_i|}$.
Therefore, we have
\begin{align} 
\theta^{\dagger}_i := 
\underset{\theta \in \Theta^{\rm low}}\argmax \,
\big\Vert
\wh{{\cal E}}_i(\theta)
\big\Vert_2
=
\underset{\theta \in \Theta^{\rm low}}\argmax \,
\frac{1}{\big| e^{\im m \theta} - \mu_i \big|}
=
\underset{\theta \in \Theta^{\rm low}}\argmin \,
\big| e^{\im m \theta} - \mu_i \big|.
\end{align}
Furthermore, since $\Theta^{\rm low}$ is the continuous frequency space spanning $\big[-\tfrac{\pi}{m}, \tfrac{\pi}{m} \big)$ (see \eqref{eq:Theta_low_def}), introducing  the new variable $\vartheta = m \theta$ gives
\begin{align}
\underset{\theta \in \Theta^{\rm low}}\min \,
\big| e^{\im m \theta} - \mu_i \big|
=
\underset{\vartheta  \in [-\pi, \pi)}\min \,
\big| e^{\im \vartheta} - \mu_i \big|.
\end{align}
This is simply the shortest distance from the unit circle to the complex number $\mu_i$ that lies inside it (recall that $|\mu_i| < 1$ under \Cref{ass:stability}).
By a simple geometric argument, this distance is minimized by the point on the unit circle having the same argument as $\mu_i$; that is, the minimum over $\vartheta \in [-\pi, \pi)$ is achieved at $\vartheta = \arg \mu_i$.
Since $\theta = \vartheta / m$, the minimizing frequency over $\theta \in \Theta^{\rm low}$ is simply $\theta^{\dagger}_i = \frac{1}{m} \arg \mu_i$.

To evaluate this minimum distance, write $\mu_i$ in polar form as $\mu_i = |\mu_i| e^{\im \arg \mu_i}$ and substitute it into the above equation to yield
\begin{align}
\underset{\theta \in \Theta^{\rm low}}\min \,
\big| e^{\im m \theta} - \mu_i \big|
=
\big| e^{\im \arg \mu_i } - |\mu_i| e^{\im \arg \mu_i} \big|
=
| 1 - |\mu_i| | \big| e^{\im \arg \mu_i } \big|
=
1 - |\mu_i|,
\end{align}
with the last equality following since $|\mu_i| < 1$.
Finally, the result \eqref{eq:Ei_norm} follows by evaluating $\big\Vert
\wh{{\cal E}}_i(\theta^{\dagger}_i)
\big\Vert_2$ from \eqref{eq:Ei_eigen_norm} using the fact that $\big| e^{\im m \theta^{\dagger}_i} - \mu_i \big| = 1 - |\mu_i|$.
\end{proof}

Given the structure of the Fourier symbol $\wh{{\cal E}}_i(\theta)$ in \Cref{thm:eigmat-error-prop}, and the result from \Cref{thm:Ei_norm}, it is straightforward to compute the spectral radius of ${{\cal E}}_i$.
\begin{corollary}[Error propagator spectral radius]
\label{cor:Ei_spectral_radius}
The spectral radius of the error propagator Fourier symbol $\wh{{\cal E}}_i(\theta)$ given in \cref{thm:eigmat-error-prop} is 
\begin{align} \label{eq:Ei_eigen_spectral_radius}
\rho \big(
\wh{{\cal E}}_i(\theta)
\big)
=
|\lambda_i |^{m \nu}
\frac{|\lambda_i^m - \mu_i|}{|e^{\im m \theta} - \mu_i|}.
\end{align}
Furthermore, the spectral radius of the error propagator ${\cal E}_i$ given in \eqref{eq:Ei_def} is
\begin{align} \label{eq:Ei_spectral_radius}
\rho ( {\cal E}_i )
=
\underset{\theta \in \Theta^{\rm low}} \sup
\rho \big(
\wh{{\cal E}}_i(\theta)
\big)
=
\rho \big(
\wh{{\cal E}}_i(\theta_i^\dagger)
\big)
=
|\lambda_i |^{m \nu}
\frac{|\lambda_i^m - \mu_i|}{1 - |\mu_i|}.
\end{align}
\end{corollary}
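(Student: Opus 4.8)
The plan is to exploit the remarkably simple rank-one factorization of the error propagator Fourier symbol established in \cref{thm:eigmat-error-prop}, namely $\wh{{\cal E}}_i(\theta) = f(\theta) \wh{S}^{\rm F}_i (\theta)$, together with the spectral characterization of $\wh{S}^{\rm F}_i (\theta)$ provided by \cref{cor:SF_idempotence}. Since the eigenvalues of a scalar multiple $f(\theta) M$ are exactly $f(\theta)$ times the eigenvalues of $M$, the spectrum of $\wh{{\cal E}}_i(\theta)$ can be read off immediately from that of $\wh{S}^{\rm F}_i (\theta)$, with no further matrix computation required.

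First I would recall from \cref{cor:SF_idempotence} that $\wh{S}^{\rm F}_i (\theta)$ has a single unit eigenvalue and $m-1$ zero eigenvalues. Scaling by $f(\theta)$ therefore yields one eigenvalue equal to $f(\theta)$ and $m-1$ eigenvalues equal to zero, so that $\rho \big( \wh{{\cal E}}_i(\theta) \big) = |f(\theta)|$. Substituting the expression for $f(\theta)$ from \eqref{eq:f_def} and using $|e^{-\im \theta}| = 1$ (so that $\big| (\lambda_i e^{-\im \theta})^{m\nu} \big| = |\lambda_i|^{m\nu}$) gives the claimed Fourier-symbol formula \eqref{eq:Ei_eigen_spectral_radius}.

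For the spectral radius of ${\cal E}_i$ itself, I would invoke the equivalence in \eqref{eq:Ei_norm_rho_equivalence}, which reduces the problem to maximizing $\rho \big( \wh{{\cal E}}_i(\theta) \big)$ over $\theta \in \Theta^{\rm low}$. The only $\theta$-dependence in \eqref{eq:Ei_eigen_spectral_radius} enters through the factor $1/\big| e^{\im m\theta} - \mu_i \big|$, which is precisely the same dependence that appears in the Fourier-symbol norm \eqref{eq:Ei_eigen_norm}. Consequently the maximizer is again $\theta = \theta^{\dagger}_i = \tfrac{1}{m}\arg \mu_i$, and at this frequency the geometric argument already carried out in the proof of \cref{thm:Ei_norm} gives $\big| e^{\im m \theta^{\dagger}_i} - \mu_i \big| = 1 - |\mu_i|$. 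Evaluating \eqref{eq:Ei_eigen_spectral_radius} at $\theta^{\dagger}_i$ then produces the final expression \eqref{eq:Ei_spectral_radius}.

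Since every piece of technical work --- the rank-one factorization, the idempotence and eigenvalue structure of $\wh{S}^{\rm F}_i (\theta)$, and the minimization of $\big| e^{\im m\theta} - \mu_i \big|$ over $\Theta^{\rm low}$ --- has already been completed in the preceding results, there is no genuine obstacle. The corollary amounts to reading off the spectral radius from the structure of the symbol and reusing the maximizing frequency from \cref{thm:Ei_norm}, which is exactly why it admits only a short argument.
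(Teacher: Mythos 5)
Your proposal is correct and follows essentially the same route as the paper's own proof: both read off $\rho\big(\wh{{\cal E}}_i(\theta)\big) = |f(\theta)|$ from the factorization $\wh{{\cal E}}_i(\theta) = f(\theta)\wh{S}^{\rm F}_i(\theta)$ together with the eigenvalue structure in \cref{cor:SF_idempotence}, and both then invoke \eqref{eq:Ei_norm_rho_equivalence} and reuse the minimizing frequency $\theta^{\dagger}_i$ from the proof of \cref{thm:Ei_norm} to obtain \eqref{eq:Ei_spectral_radius}. No gaps.
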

\begin{proof}
From the expression for $\wh{{\cal E}}_i(\theta)$ given in \eqref{eq:Ei_eigen}, its spectral radius is simply 
\begin{align}
\rho \big(
\wh{{\cal E}}_i(\theta)
\big)
=
\rho \big( f(\theta) \wh{S}^{\rm F}_i (\theta) \big)
=
|f(\theta)| \rho \big( \wh{S}^{\rm F}_i (\theta) \big)
=
|f(\theta)|,
\end{align}
with the final equality following as an immediate consequence of \Cref{cor:SF_idempotence}.
Substituting $f(\theta)$ from its definition given in \eqref{eq:f_def} gives the claimed result of \eqref{eq:Ei_eigen_spectral_radius}.

The first equality in \eqref{eq:Ei_spectral_radius} was already given as \eqref{eq:Ei_norm_rho_equivalence}.
The second equality follows from $\theta_i^\dagger$ being the maximizer of $|e^{\im m \theta} - \mu_i|$, as shown in the proof of \Cref{thm:Ei_norm}.
\end{proof}

\begin{remark}[Rigorous Fourier analysis for time-periodic problems]
While we have derived an LFA theory for the initial-value problem, our theory can be applied exactly or rigorously to a time-periodic MGRIT solver that employs a time-periodic coarse-grid equation (see, e.g., the solvers in References~\citenum{Gander_etal_2013,Hessenthaler_etal_2022}).
The reason for this is because the operators appearing in the time-periodic analogue of the error propagator \eqref{eq:Ei_def} are (block) circulant.
Therefore, the periodic Fourier modes $\bm{\varphi}_{\ell}(\theta)$---with $\theta$ discretely sampled at $n_t$ equidistant frequencies---are eigenfunctions for any finite $n_t$, and not only formally in the limit as $n_t \to \infty$ (as for the initial-value problem).
See Section 3.4.4 of Trottenberg~et~al.\cite{Trottenberg_etal_2001} for a discussion on the link between rigorous and local Fourier analysis.
Also note that our LFA theory bears a resemblance to the Fourier theory from Gander~et~al.\cite{Gander_etal_2013} for a time-periodic Parareal algorithm employing a time-periodic coarse-grid correction.
\end{remark}

\section{Comparison to existing literature}
\label{sec:literature_comparison}

The LFA results \eqref{eq:Ei_norm} and \eqref{eq:Ei_spectral_radius} from \cref{sec:LFA} bear a close resemblance to MGRIT convergence results in 
References~\citenum{DeSterck_etal_2019,
Dobrev_etal_2017, 
Hessenthaler_etal_2020,
Southworth_2019} obtained by other means.
One salient difference is that References~
\citenum{Dobrev_etal_2017, 
Hessenthaler_etal_2020,
Southworth_2019} analyze error propagation only over the coarse grid, following the argument that the use of ideal interpolation (or injection followed by F-relaxation) results in error dominated by its coarse-grid representation (see also \cref{sec:error_prop}).  
Further, References~\citenum{Dobrev_etal_2017, 
Hessenthaler_etal_2020,
Southworth_2019} consider the case of a finite temporal grid (finite $n_t$) and an initial-value problem. 
Our results apply rigorously (i.e., exactly) to time-periodic problems for finite $n_t$.
Our results show also that LFA convergence estimates for the initial-value problem are similar to existing convergence bounds for the initial-value problem obtained by other means.
In addition, as described earlier, our results extend the LFA theory from De~Sterck~et~al.\cite{DeSterck_etal_2019} in which Fourier symbols were derived, but they were not used in such a way as to derive analytical convergence estimates like those in \cref{sec:LFA}.

This close resemblance to References~\citenum{DeSterck_etal_2019,
Dobrev_etal_2017, 
Hessenthaler_etal_2020,
Southworth_2019} described above is perhaps best seen by considering Southworth~et~al.\cite{Southworth_etal_2021}---a companion article to Reference~\citenum{Southworth_2019}---because it gives results for fine-grid error propagation.
Specifically, adopting our notation, and imposing \Cref{ass:Phi_Psi}, Corollary 3 of  Southworth~et~al.\cite{Southworth_etal_2021} states that with FCF-relaxation (i.e., $\nu = 1$) the $\ell^2$-norm of the MGRIT error propagator \eqref{eq:E_mgrit} for finite $n_t$ is
\begin{align} 
\label{eq:Southworth_etal_2021_FCF}
\Vert {\cal E} \Vert_2 
= 
\underset{1 \leq i \leq n_x} 
\max |\lambda_i|^{m} \frac{|\lambda_i^m - \mu_i|}{1 - |\mu_i| + {\cal O}(1/N_c)} \sqrt{\frac{1 - |\lambda_i|^{2m}}{1 - |\lambda_i|^2}}.
\end{align}
Note that the formulation of ${\cal E}$ used by Southworth~et~al.\cite{Southworth_etal_2021} is different than ours given by \eqref{eq:E_mgrit}; however, they both represent the error propagator of the MGRIT algorithm and are, therefore, equivalent (accounting for different conventions for the sizes of the fine and coarse grids).
In \eqref{eq:Southworth_etal_2021_FCF}, $N_c$ is the number of time points on the coarse grid, which is slightly different from the quantity of $n_t/m$ that we have used.
It is important to stress that \eqref{eq:Southworth_etal_2021_FCF} represents a genuine equality for finite $n_t$, up to the ${\cal O}(1/N_c)$ terms, unlike the LFA approximations considered in \cref{sec:LFA} which hold for the initial-value problem in the limit of infinite $n_t$.

Recalling our LFA approximation of $\Vert
{{\cal E}}_i
\Vert_2$ given by \eqref{eq:Ei_norm}, and its relation to $\Vert {\cal E} \Vert_2$ given in \eqref{eq:E_similarity_trans}, with FCF-relaxation our LFA theory gives the following \textit{approximation} for finite $n_t$ (for clarity, we write the LFA result following from \eqref{eq:Ei_norm} with $\nu = 1$ explicitly as an approximation here),
\begin{align} \label{eq:E_norm_Southworth_compare}
\Vert {\cal E} \Vert_2
=
\underset{1 \leq i \leq n_x} \max
\Vert
{{\cal E}}_i
\Vert_2
\approx
\underset{1 \leq i \leq n_x} \max
\,
\underset{\theta \in \Theta^{\rm low}} \sup
\big\Vert
\wh{{\cal E}}_i(\theta)
\big\Vert_2
=
\underset{1 \leq i \leq n_x} \max
|\lambda_i|^{m}
\frac{|\lambda_i^m - \mu_i|}{1 - |\mu_i|}
\sqrt{ \frac{1 - |\lambda_i|^{2m}}{1 - |\lambda_i|^2} }.
\end{align}
Comparing \eqref{eq:E_norm_Southworth_compare} and \eqref{eq:Southworth_etal_2021_FCF}, they differ only by the small perturbation of ${\cal O}(1/N_c)$ appearing in \eqref{eq:Southworth_etal_2021_FCF}.
Moreover, as $n_t \to \infty$, and thus $N_c = {\cal O}(n_t/m) \to \infty$, the two expressions are equivalent. 
In other words, since \eqref{eq:Southworth_etal_2021_FCF} is valid for any value of $n_t$, it is consistent with our LFA approximation \eqref{eq:E_norm_Southworth_compare} holding exactly for $\Vert {\cal E} \Vert_2$ as $n_t \to \infty$.
This consistency provides independent verification that our LFA theory of \cref{sec:LFA} is correct.

A key distinction of our theory compared to the those mentioned above is that it can naturally connect convergence issues for MGRIT to those of spatial multigrid methods for steady-state advection-dominated problems. 
Specifically, our theory can be used to describe convergence of space-time Fourier modes, while the above theories can only be used to describe convergence of spatial Fourier modes. It is precisely this retention of temporal Fourier information that makes our theory directly comparable with fully Fourier-based convergence theories in the steady-state spatial multigrid case. This comparison is studied in \cref{sec:char_comp}.

It is also interesting to note that our LFA approximation for the spectral radius of the error propagator \eqref{eq:Ei_spectral_radius} is the same as the bounds in Dobrev~et~al.\cite{Dobrev_etal_2017} Theorem 3.3 for the $\ell^2$-norm of the coarse-grid error propagator, provided one takes $n_t \to \infty$.
In practice, MGRIT possesses a well-known exactness property that, in exactly $k = \frac{1}{\nu+1}\frac{n_t}{m}$ iterations, MGRIT converges to the exact solution of the fine-grid problem computed by sequentially time-stepping the initial condition across the time domain.
As such, the spectral radius of the MGRIT error propagator is zero (for example, by Gelfand's formula) and does not provide useful information about pre-asymptotic convergence of the algorithm.
Nonetheless, in References~\citenum{Dobrev_etal_2017,DeSterck_etal_2023_SL,DeSterck_etal_2023_MOL}, \eqref{eq:Ei_spectral_radius} was shown to be an accurate prediction to the convergence factor measured in practice before the exactness property of MGRIT becomes dominant.  
Our LFA estimate of the spectral radius (which assumes periodicity-in-time and, thus, applies rigorously to cases outside of this exactness property) can, thus, be a reasonable predictor of the MGRIT convergence factor for these ``middle iterations'' of a time-parallel solve, which often determine the overall efficiency of the MGRIT methodology.

\section{Characteristic components}
\label{sec:char_comp}

We now use the LFA theory of \cref{sec:LFA} to shed light on the poor convergence of MGRIT for advection-dominated problems.
General theoretical arguments and connections to spatial multigrid methods are presented in \cref{sec:char_comp_theory}.
Building on this, \cref{sec:char_comp_SL} theoretically analyzes MGRIT convergence for a class of semi-Lagrangian discretizations.
Supporting numerical results are given in \cref{sec:char_comp_num}, and a discussion on the implications of these results as well as potential remedies for the slow convergence are given in \cref{sec:char_comp_discussion}.

This section makes use of a number of theoretical results that are rather lengthy to derive. In order not to distract from the key messages and findings of this section, many of these theoretical results and their proofs have been placed in \cref{app:eig-est,app:conv-fac-constant} rather than in the main text.

\subsection{General theoretical arguments and connection to spatial multigrid methods}
\label{sec:char_comp_theory}

We now consider the constant-coefficient, one-dimensional advection-diffusion problem
\begin{align} \label{eq:A_differential}
{\cal A} u := 
\frac{\partial u}{\partial t} + \alpha \frac{\partial u}{\partial x} - \beta \frac{\partial^2 u}{\partial x^2} = 0, 
\quad
(x,t) \in (-1, 1) \times (0, T],
\quad
u(x, 0) = u_0(x),
\end{align}
with $\alpha >0$, $\beta \geq 0$, and $u(x,t)$ subject to periodic boundary conditions in space.
Specifically, we analyze MGRIT convergence for discretizations of this problem by employing rigorous Fourier analysis in space and the LFA theory from \cref{sec:LFA}.
To this end, suppose that the space-time discretizations $A_0$ and $A_1$ given in \eqref{eq:A0_system} and \eqref{eq:A1_system}, respectively, correspond to discretizations of ${\cal A}$ on space-time meshes using $n_x$ points in the $x$-direction separated by a distance of $h$.

Next, consider the space-time discretizations $A_0$ and $A_1$ on the semi-infinite space-time meshes $\bm{M}_{0}$ and $\bm{M}_{1}$, respectively, which are defined by
\begin{align}
\bm{M}_{\ell} := \Big\{ (x,t) = (jh, k m^{\ell} \delta t) \colon j \in 
\Big\{ -\frac{n_x}{2}, \ldots, \frac{n_x}{2}-1
\Big\}, k \in \mathbb{N}_0 \Big\}, 
\quad \ell \in \{0, 1\}.
\end{align}
On $\bm{M}_{\ell}$, we also consider the space-time Fourier modes
\begin{align} \label{eq:Fourier_modes_space-time_def}
\varrho_{\ell} (\omega, \theta) 
:= 
\chi(\omega)  \varphi_{\ell}(\theta),
\quad (\omega, \theta) \in [-\pi, \pi) \times \Theta_{\ell},
\end{align}
in which $\chi$ is a spatial Fourier mode, and $\varphi_{\ell}$ is the temporal Fourier mode from \eqref{eq:Fourier_modes_def}:
\begin{align} \label{eq:Fouier_modes_space_def}
\chi(\omega) := \exp \left( \frac{\im \omega x}{h} \right),
\quad
\varphi_{\ell}(\theta) := \exp \left( \frac{\im \theta t}{m^{\ell} \delta t} \right),
\quad
\quad (x,t) \in \bm{M}_{\ell}.
\end{align}
The spatial frequency $\omega$ discretizes $[-\pi, \pi)$ with $n_x$ points separated by a distance $h$, and, as previously, $\theta$ varies continuously in $\Theta_{\ell}$ (defined in \eqref{eq:Theta_ell_def}). 
We refer to the smoothest Fourier modes on a given spatial mesh, that is, those with frequency $\omega = {\cal O}(h)$, as being \textit{asymptotically smooth}.
%

%
%
Let $\lambda(\omega)$ and $\mu(\omega)$ be the Fourier symbols associated with spatial frequency $\omega$ of the fine- and coarse-grid time-stepping operators $\Phi$ and $\Psi$, respectively.
Observe the slight change in notation from earlier sections: The eigenvalues $\lambda(\omega)$ and $\mu(\omega)$, and functions of them, are now described by the argument $\omega$ rather than with the subscript notation $\lambda_i$ and $\mu_i$.
Then, the Fourier symbols $\wt{A}_{\ell}(\omega,\theta)$ of $A_{\ell}$ are given by (see also \eqref{eq:Ai_symbols})
\begin{align} \label{eq:char_comp_symbols_space-time}
\wt{A}_{0}(\omega,\theta) = 1 - \lambda(\omega) e^{- \im \theta}, 
\quad
\wt{A}_{1}(\omega,m \theta) = 1 - \mu(\omega) e^{- \im m \theta}.
\end{align}

The infinite-dimensional MGRIT error propagator ${\cal E}$ in \eqref{eq:E_mgrit} can be block diagonalized, analogously to how it was first block diagonalized in space in \cref{sec:error_prop_time-only}, and then in time in \cref{sec:LFA}.
Specifically, under a similarity transform we have
\begin{align} \label{eq:E_diag_char}
{\cal E} 
\;
\underset{\rm similarity\, transform} \mapsto
\;
\underset{(\omega,\theta) \in [-\pi, \pi) \times \Theta^{\rm low}}\diag \Big( \wh{{\cal E}}(\omega, \theta) \Big), 
\end{align}
where, for a fixed frequency pair $(\omega, \theta)$, $\wh{{\cal E}}(\omega, \theta) \in \mathbb{C}^{m \times m}$ is the Fourier symbol associated with the $m$ space-time modes \eqref{eq:Fourier_modes_space-time_def} having frequencies $\big(\omega, \theta + \frac{2 \pi p}{m} \big)$, $p \in \{0, \ldots, m-1\}$ (see \Cref{def:mh-harmonics}).
The set of low frequencies $\Theta^{\rm low}$ is as in \eqref{eq:Theta_low_def}.

The LFA theory from \Cref{sec:LFA} can be used to compute the spectral radius of the diagonal blocks $\wh{{\cal E}}(\omega, \theta)$ as follows.
Note that an analogous expression for the norm can be found, but the spectral radius is sufficient for our purposes.
\begin{lemma}[Spectral radius of space-time error propagator]
\label{lem:rho_E_space-time} 
For a fixed pair of frequencies $(\omega, \theta)$, the spectral radius of the associated diagonal block in \eqref{eq:E_diag_char} is
\begin{align} \label{eq:rho_E_space-time}
\rho \big( 
\wh{{\cal E}}(\omega, \theta)
\big )
=
| \lambda(\omega) |^{m \nu}
\left|
\frac{\wt{A}_{1}(\omega, m \theta) - \wt{A}^{\rm ideal}_{1}(\omega, m \theta)}{\wt{A}_{1}(\omega, m \theta)}
\right|,
\quad
(\omega,\theta) \in [-\pi, \pi) \times \Theta^{\rm low},
\end{align}
with $\wt{A}^{\rm ideal}_{1}(\omega, m \theta) = 1 - \big[\lambda(\omega)\big]^m e^{- \im m \theta}$ the Fourier symbol of the coarse-grid space-time discretization that uses the ideal coarse-grid time-stepping operator $\Psi = \Phi^m$.
\end{lemma}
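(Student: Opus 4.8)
The plan is to reduce the claim to \cref{cor:Ei_spectral_radius}, which already gives the spectral radius of the temporal Fourier symbol in terms of the scalar eigenvalues $\lambda_i$ and $\mu_i$. The only genuine work is to justify that the space-time diagonalization \eqref{eq:E_diag_char} is precisely the temporal LFA of \cref{sec:LFA} applied mode-by-mode in space, and then to rewrite the resulting expression using the symbols in \eqref{eq:char_comp_symbols_space-time}.

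First I would observe that, because \eqref{eq:A_differential} has constant coefficients and periodic spatial boundary conditions, the spatial discretizations underlying $\Phi$ and $\Psi$ are (block) circulant and are therefore diagonalized by the spatial Fourier modes $\chi(\omega)$. Consequently each $\chi(\omega)$ acts as a common eigenvector of $\Phi$ and $\Psi$ with eigenvalues $\lambda(\omega)$ and $\mu(\omega)$, so the spatial Fourier basis plays exactly the role of the unitary ${\cal U}$ in \cref{ass:Phi_Psi}. The composite similarity transform in \eqref{eq:E_diag_char} then factors: diagonalizing in space first reduces ${\cal E}$ to the family of temporal propagators ${\cal E}_i$ of \eqref{eq:Ei_def}---one for each spatial frequency $\omega$, with $(\lambda_i, \mu_i)$ replaced by $(\lambda(\omega), \mu(\omega))$---and the subsequent temporal block diagonalization of \cref{sec:LFA} produces exactly the blocks $\wh{{\cal E}}(\omega, \theta)$.

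With this identification in hand, I would invoke \eqref{eq:Ei_eigen_spectral_radius} verbatim, substituting $\lambda_i \to \lambda(\omega)$ and $\mu_i \to \mu(\omega)$, to obtain
\begin{align*}
\rho \big( \wh{{\cal E}}(\omega,\theta) \big)
=
|\lambda(\omega)|^{m\nu}
\frac{|\lambda(\omega)^m - \mu(\omega)|}{|e^{\im m\theta} - \mu(\omega)|}.
\end{align*}
It then remains to recast this in terms of the coarse-grid symbols. Using \eqref{eq:char_comp_symbols_space-time} and the definition of $\wt{A}^{\rm ideal}_{1}$, a one-line computation gives $\wt{A}_{1}(\omega, m\theta) - \wt{A}^{\rm ideal}_{1}(\omega, m\theta) = (\lambda(\omega)^m - \mu(\omega)) e^{-\im m\theta}$, so the numerator matches after using $|e^{-\im m\theta}| = 1$. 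For the denominator, factoring $e^{\im m\theta}$ out of $e^{\im m\theta} - \mu(\omega)$ and using $|e^{\im m\theta}| = 1$ gives $|e^{\im m\theta} - \mu(\omega)| = |1 - \mu(\omega)e^{-\im m\theta}| = |\wt{A}_{1}(\omega, m\theta)|$, which completes the rewriting.

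I expect the main obstacle to be expository rather than technical: carefully arguing that the two nested diagonalizations---space via rigorous Fourier analysis, time via LFA---commute and compose into the single transform \eqref{eq:E_diag_char}, so that \cref{cor:Ei_spectral_radius} may be applied with the spatial symbols $\lambda(\omega), \mu(\omega)$ in place of the eigenvalues $\lambda_i, \mu_i$. Once that correspondence is made precise, the remaining modulus manipulations are entirely routine.
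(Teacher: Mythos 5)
Your proposal is correct and follows essentially the same route as the paper: the paper's proof is the one-line statement that the result ``follows directly from rearranging \eqref{eq:Ei_eigen_spectral_radius} of \cref{cor:Ei_spectral_radius}'', with the identification of the spatial Fourier modes as the common eigenvectors of $\Phi$ and $\Psi$ (your ``expository'' step) handled in the discussion preceding the lemma in \cref{sec:char_comp_theory}. Your modulus manipulations, $\wt{A}_{1}(\omega,m\theta)-\wt{A}^{\rm ideal}_{1}(\omega,m\theta)=\big(\lambda(\omega)^m-\mu(\omega)\big)e^{-\im m\theta}$ and $\big|e^{\im m\theta}-\mu(\omega)\big|=\big|\wt{A}_{1}(\omega,m\theta)\big|$, are exactly the rearrangement the paper has in mind.
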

\begin{proof}
This follows directly from rearranging \eqref{eq:Ei_eigen_spectral_radius} of \Cref{cor:Ei_spectral_radius}.
\end{proof}

From \eqref{eq:rho_E_space-time}, there are two mechanisms governing MGRIT convergence.
First, the factor $| \lambda(\omega) |^{m \nu}$ arising from $\nu$ sweeps of CF-relaxation in the pre-relaxation (recall \cref{sec:MGRIT}). 
This mechanism will quickly damp error modes that rapidly decay under time-stepping, but will have little effect for error modes that decay slowly under time-stepping, when ${| \lambda(\omega) | \approx 1}$.
Since MGRIT uses time-stepping as its local relaxation scheme, these modes must, instead, be targeted by the global coarse-grid correction, which corresponds to the second mechanism in \eqref{eq:rho_E_space-time}.
The effectiveness of this coarse-grid correction for a given space-time mode is determined by the corresponding \textit{relative accuracy} of $A_1$ with respect to $A^{\rm ideal}_{1}$.
This realization allows us to draw a connection between the convergence of 
MGRIT for PDE \eqref{eq:A_differential} and classical spatial multigrid for steady-state advection-diffusion problems, as we now discuss.
Consider the steady-state advection-diffusion PDE ${\cal L} u := (\alpha_x, \alpha_y) \cdot \nabla u - \beta \Delta u = 0$ in two dimensions, with fine- and coarse-grid discretizations given by $L_h$ and $L_H$, respectively. Here, $(\alpha_x, \alpha_y)$ is a two-dimensional wave-speed, and $\beta \geq 0$ is the diffusivity.
Let $(\omega_x, \omega_y)$ be a two-component spatial Fourier frequency and consider the effect of coarse-grid correction on asymptotically smooth modes, i.e., $(\omega_x, \omega_y) = ({\cal O}(h), {\cal O}(h))$.
Under reasonable assumptions on the relaxation scheme and the intergrid transfer operators, it can be shown that the two-grid convergence factor of an asymptotically smooth mode is proportional to\cite{Brandt_Yavneh_1993,Yavneh_1998}
\begin{align} \label{eq:LH_relative}
\left| \frac{\wt{L}_H( \omega_x, \omega_y ) - \wt{L}_h(\omega_x, \omega_y)}{\wt{L}_H(\omega_x, \omega_y)} \right|, 
\quad
(\omega_x, \omega_y) = ({\cal O}(h), {\cal O}(h)).
\end{align}
Notice that \eqref{eq:LH_relative} closely resembles \eqref{eq:rho_E_space-time}.
For a given $(\omega_x, \omega_y)$, the fraction \eqref{eq:LH_relative} determines the \textit{relative accuracy} of $L_H$ with respect to $L_h$, while its numerator determines the \textit{absolute accuracy} of $L_H$ with respect to $L_h$.\cite{Yavneh_1998} 
Suppose $L_H$ is derived by rediscretizing $L_h$, then, since both $L_H$ and $L_h$ are consistent with ${\cal L}$, the numerator of \eqref{eq:LH_relative} should be small as $(\omega_x, \omega_y) \to (0,0)$.
When $\beta \gg {\cal O}(h)$, so that $L_h$ and $L_H$ are close to elliptic, the denominator in \eqref{eq:LH_relative} is typically some ${\cal O}(1)$ constant, independent of $(\omega_x, \omega_y)$. Thus, two-grid convergence is fast for all asymptotically smooth modes.
However, in the strongly non-elliptic case $\beta \to 0^+$ the denominator of \eqref{eq:LH_relative} is not an ${\cal O}(1)$ constant independent of $(\omega_x, \omega_y)$.
Instead, the denominator of \eqref{eq:LH_relative} approximately vanishes for modes $(\alpha_x, \alpha_y ) \cdot \big( \frac{\omega_x}{h}, \frac{\omega_y}{h} \big) \approx 0$, which are the so-called \textit{characteristic components}.\cite{Yavneh_1998} Overall two-grid convergence is dramatically slowed by the inadequate coarse-grid correction of these components. 
Characteristic components are Fourier modes that vary slowly---relative to the grid spacing---along the direction of characteristics (as defined by the advective component of the PDE), but that are free to vary in the direction normal to characteristics. 
Characteristic components that are also asymptotically smooth (as described above) are those which vary slowly also in the direction normal to characteristics.
In the $\beta \to 0^+$ limit, on asymptotically smooth characteristic components, $\wt{L}_H$ vanishes up to small terms related to the truncation error of $L_H$, since $L_H$ is a consistent discretization of ${\cal L}$, and $\wt{\cal L}$ itself vanishes on these components.
On non-characteristic components, $\wt{L}_H$ would typically be some ${\cal O}(1)$ constant.
In other words, the relative accuracy of $L_H$ on asymptotically smooth characteristic components is much less than that for all other asymptotically smooth components.
The poor convergence of characteristic components in spatial multigrid was first described in Section 5.1 of Brandt,\cite{Brandt_1981} is covered in great detail by Yavneh,\cite{Yavneh_1998} and has been discussed in many other contexts.\cite{Brandt_Yavneh_1993,
Oosterlee_Washio_2000,
Trottenberg_etal_2001,
Wan_Chan_2003,
Bank_etal_2006,
Yavneh_Weinzierl_2012}
Based on the resemblance of \eqref{eq:rho_E_space-time} to \eqref{eq:LH_relative} and the above discussion, it is reasonable to expect that \textit{if using a rediscretized coarse-grid operator, MGRIT convergence for discretizations of PDE \eqref{eq:A_differential} will deteriorate in the advection-dominated limit, at least in part, due to a poor coarse-grid correction of asymptotically smooth characteristic components}.
Here we add the qualifier ``at least in part'' because there may also exist non-asymptotically smooth modes that are not efficiently damped by the solver (as numerical results in \cref{sec:char_comp_num} will reveal).
In the spatial, steady-state setting, when low-order discretizations are used, of all modes, only asymptotically smooth characteristic components are slow to converge, while the picture is more complicated for higher-order discretizations (see Figure 4 of Oosterlee \& Washio\cite{Oosterlee_Washio_2000}).

To make the above discussion on MGRIT convergence deterioration more precise: $A_1$ is a consistent space-time discretization of the PDE \eqref{eq:A_differential} up to some constant scaling factor, say, $\zeta$, and, thus, its Fourier symbol for modes $\omega = {\cal O}(h)$ and $\theta = {\cal O}(\delta t)$ must be consistent with that of the continuous differential operator ${\cal A}$ up to a scaling of $\zeta$.
Therefore, we have the Taylor expansion,
\begin{align}
\wt{A}_1(\omega, m \theta) \Big|_{\omega = {\cal O}(h)} 
= 
\zeta \wt{{\cal A}}(\omega, m \theta) + \textrm{h.o.t}
=
\im \zeta 
\left(
\frac{\omega \alpha}{h}
+
\frac{m \theta}{m \delta t} 
\right)
+
\zeta \beta \frac{\omega^2}{h^2}
+ \textrm{h.o.t}.
\end{align}
Here, the higher-order terms arise from the truncation error of $A_1$.
Thus, the advective space-time component of $\wt{{\cal A}}(\omega, m \theta)$ vanishes on space-time characteristic components; that is, in terms of  modes \eqref{eq:Fourier_modes_space-time_def}, are those satisfying 
\begin{align}
\big(1, \alpha \big) \cdot \left( \frac{\theta}{\delta t}, \frac{\omega}{h} \right)
=
\frac{\theta}{\delta t} + \alpha \frac{\omega}{h} \approx 0
\quad
\Longrightarrow
\quad
\theta \approx - \frac{\omega \alpha \delta t}{h}. 
\end{align}
For the discretization $\wt{A}_1(\omega, m \theta)$, we therefore have
\begin{align} \label{eq:A1tilde_on_char_comp}
\left.
\wt{A}_1\left(\omega, m \theta \approx - \frac{\omega \alpha m \delta t}{h} \right) 
\right|_{\omega = {\cal O}(h)} 
\approx
\zeta \beta \frac{\omega^2}{h^2}
+ \textrm{h.o.t}.
\end{align}
Since $\omega = {\cal O}(h)$, the right-hand side of \eqref{eq:A1tilde_on_char_comp} will be bounded away from zero so long as $\beta$ itself is. However, as $\beta \to 0^+$, the right-hand side of \eqref{eq:A1tilde_on_char_comp} vanishes up to terms related to the truncation error of $A_1$, and the relative error factor in \eqref{eq:rho_E_space-time} may be large. 

We emphasize that the result of \cref{lem:rho_E_space-time} is a key distinguishing factor our LFA-based convergence theory from existing non-LFA MGRIT convergence theories.\cite{Dobrev_etal_2017, 
Hessenthaler_etal_2020,
Southworth_2019} Specifically, since \eqref{eq:rho_E_space-time} retains both spatial and temporal Fourier information (results in References \citenum{Dobrev_etal_2017, 
Hessenthaler_etal_2020,
Southworth_2019} retain only spatial Fourier information; see also Section~\ref{sec:literature_comparison}), we have been able to compare it directly to fully Fourier-based convergence estimates developed in the spatial, steady-state setting, thereby showing that MGRIT convergence issues manifest for the same reasons as in the spatial setting.

\subsection{Semi-Lagrangian discretizations of linear advection}
\label{sec:char_comp_SL}

In this section, we investigate the general arguments made in the previous section as they apply specifically to semi-Lagrangian discretizations of the constant-wave-speed advection problem obtained by setting $\beta = 0$ in PDE \eqref{eq:A_differential}.
In De~Sterck~et~al.,\cite{DeSterck_etal_2023_MOL} we consider a similar analysis for method-of-lines discretizations of advection problems that use finite-difference discretizations in space and Runge-Kutta temporal discretizations. 

A general understanding of semi-Lagrangian discretizations for linear advection problems is useful for interpreting the results in this section, although it is not necessary. 
In essence, semi-Lagrangian methods approximate the solution at spatial mesh points at time $t_n + \delta t$ by integrating backwards along characteristics of the PDE to time $t_n$, and then approximating the PDE solution at the feet of these characteristics using polynomial interpolation (the feet of the characteristics do not, in general, intersect spatial mesh points, which is where the current approximation is known).
Excellent introductions to these discretizations can be found in References~ \citenum{Durran_2010,Falcone_Ferretti_2014}, or, alternatively, see our previous works in References~\citenum{DeSterck_etal_2023_SL,KrzysikThesis2021} for the specific formulations we consider here.
We write the time-stepping operator for a semi-Lagrangian discretization as $\Phi = {\cal S}^{(\delta t)}_p \in \mathbb{R}^{n_x \times n_x}$, indicating that the discretization has a (global) order of accuracy $p \in \mathbb{N}$, and that it uses a time-step size of $\delta t$.
So long as the spatial boundary conditions are periodic, note that ${\cal S}^{(\delta t)}_p \in \mathbb{R}^{n_x \times n_x}$ can be written as a circulant matrix regardless of the size of $\delta t > 0$. 
Since circulant matrices are unitarily diagonalized by the discrete Fourier transform, if $\Phi = {\cal S}^{(\delta t)}_p$ and $\Psi = {\cal S}^{(m \delta t)}_p$ these operators will be simultaneously unitarily diagonalizable, thus satisfying Assumption \ref{ass:Phi_Psi}.
Here, we consider only odd $p$, since in De~Sterck~et~al.\cite{DeSterck_etal_2023_SL} we were able to develop efficient MGRIT solvers for odd $p$ and not even $p$.
The key distinction between odd and even $p$ is whether the dominant term in the truncation error acts dissipatively or dispersively, respectively.\cite{DeSterck_etal_2023_SL}
It is important to note that the calculations and results given here for odd $p$ do not trivially extend to the case of even $p$, which have additional structure in a certain sense; we provide further details about this later.

We write the CFL number of the discretization ${\cal S}^{(\delta t)}_p$ as $c := \frac{\alpha \delta t}{h} > 0$. 
A key quantity in our analysis here is the fractional part of the CFL number, which we denote by 
\begin{align} \label{eq:epsilon_def}
\varepsilon^{(\delta t)} := \frac{\alpha \delta t}{h}  - \left\lfloor \frac{\alpha \delta t}{h} \right\rfloor \in [0, 1).
\end{align}
We note that semi-Lagrangian discretizations for this linear advection problem are unconditionally stable with respect to $c$.
However, the discretizations admit a form of translational symmetry in $c$ that means it will only be necessary for us to study MGRIT convergence for $c \in (0, 1]$.\footnote{If $p$ is odd, and ${\cal S}_p^{(\delta t)}$ is a discretization for some CFL number $c = \hat{c} \in (0, 1]$, then the discretization for the CFL number $c = \hat{c}+k$, $k \in \mathbb{N}$, can be written as $L^{(k)} {\cal S}_p^{(\delta t)} $, where $L^{(k)} \in \mathbb{R}^{n_x \times n_x}$ is a circulant matrix with ones on its $k$th subdiagonal. 
Note that $\big[ L^{(k)} {\cal S}_p^{(\delta t)} \big]^m  = L^{(m k)} \big[  {\cal S}_p^{(\delta t)} \big]^m$.
Analogously, if ${\cal S}_p^{(m \delta t)}$ is the rediscretized operator (i.e., the discretization for CFL number $m \hat{c}$), then the discretization for the CFL number $m (\hat{c} + k) = m \hat{c}+m k$ can be written as $L^{(m k)} {\cal S}_p^{(m \delta t)}$.
Multiplication by $L^{(m k)}$ causes a rotation of eigenvalues, but not a change in their magnitude.
Applying this fact to, for example, $\rho ({\cal E}_i)$ in \cref{cor:Ei_spectral_radius}, it is easy to see that MGRIT convergence is independent of $k$.} 
For this reason, we also sometimes abuse notation by referring to $\varepsilon^{(\delta t)}$ as the fine-grid CFL number.
From Krzysik\cite{KrzysikThesis2021} p. 103, note that the fractional component of the coarse-grid CFL number satisfies $\varepsilon^{(m \delta t)} := \frac{\alpha m \delta t}{h}  - \left\lfloor \frac{\alpha m \delta t}{h} \right\rfloor = m \varepsilon^{( \delta t)} - \left\lfloor m \varepsilon^{( \delta t)} \right\rfloor \in [0, 1).$
This brings us to the following key assumption needed in the forthcoming analysis.
\begin{assumption} \label{ass:no-mesh-intersect}
The fine-grid CFL number $c := \frac{\alpha \delta t}{h}$ is not an integer, and, furthermore, it is such that the associated coarse-grid CFL number of $m c$ is not an integer.
In other words, the domain of $\varepsilon^{(\delta t)}$ in \eqref{eq:epsilon_def} is restricted as
\begin{align} \label{eq:Upsilon_def}
\varepsilon^{(\delta t)} 
\in 
\Upsilon_m :=
(0, 1) 
\setminus 
\{ \varepsilon \colon
m \varepsilon - \lfloor m \varepsilon \rfloor = 0 \}.
\end{align}
\end{assumption}
The reason for this assumption is that if the CFL number on a given level is an integer, then characteristics on that level will intersect with the mesh, and all eigenvalues of the associated semi-Lagrangian discretization will have unit magnitude, which is not covered by our LFA theory as per \cref{ass:stability}.
We stress that this assumption is a technical necessity, and that it does not carry with it any practical implication in the sense that the MGRIT convergence factor is a continuous function of the fractional component of coarse-grid CFL number, $\varepsilon^{(m \delta t)}$. That is to say, the MGRIT convergence factor when $\varepsilon^{(m \delta t)} = 0$ will not qualitatively differ from that when $\varepsilon^{(m \delta t)} = \varpi$ for some $0 < \varpi \ll 1$.

With notation and assumptions now specified, we present our two results for this section.
First, \cref{thm:cgc_frac_SL} confirms that asymptotically smooth characteristic components do receive a poor coarse-grid correction compared to all other asymptotically smooth modes.
Second, \cref{THM:RHO-LWR-BOUND-SL} shows that this poor coarse-grid correction prevents the possibility of robust two-grid convergence with respect to problem parameters.
\begin{theorem}[Order zero coarse-grid correction] \label{thm:cgc_frac_SL}
Suppose $\Phi = {\cal S}_{p}^{(\delta t)}$, and $\Psi = {\cal S}_{p}^{(m \delta t)}$, with $p$ odd.
Suppose that \cref{ass:no-mesh-intersect} holds.
Then, asymptotically smooth Fourier modes receive a coarse-grid correction that is order $p+1$ small in $\omega$ if they are not characteristic components, while asymptotically smooth characteristic components receive an order zero coarse-grid correction in $\omega$,
\begin{align} \label{eq:cgc_frac_SL}
\left.
\frac{\big|
\widetilde{A}_{1}(\omega, m \theta) - \widetilde{A}^{\rm ideal}_{1}(\omega, m \theta)
\big|}
{\big| \widetilde{A}_{1}(\omega, m \theta) \big|}
\right|_{\omega = {\cal O}(h)}
=
\begin{cases}
{\cal O}(\omega^{p+1}), 
\quad & \displaystyle{\theta \not\approx - \frac{\omega \alpha \delta t}{h}},
\\[2ex]
{\cal O}(1),
\quad & \displaystyle{\theta \approx - \frac{\omega \alpha \delta t}{h}}.
\end{cases}
\end{align}
\end{theorem}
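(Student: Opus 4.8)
The plan is to treat the numerator and denominator of the ratio in \eqref{eq:cgc_frac_SL} separately and then combine them on the two regimes $\theta\not\approx-\omega\alpha\delta t/h$ and $\theta\approx-\omega\alpha\delta t/h$. First I would simplify the numerator: substituting the symbols $\wt{A}_1(\omega,m\theta)=1-\mu(\omega)e^{-\im m\theta}$ and $\wt{A}^{\rm ideal}_1(\omega,m\theta)=1-[\lambda(\omega)]^m e^{-\im m\theta}$ from \eqref{eq:char_comp_symbols_space-time} and \cref{lem:rho_E_space-time}, the factor $e^{-\im m\theta}$ cancels and the numerator collapses to the $\theta$-independent quantity $|[\lambda(\omega)]^m-\mu(\omega)|$. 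This is the crucial simplification, because it means the difference between the characteristic and non-characteristic cases is carried entirely by the denominator. The key structural fact is that both $[\lambda(\omega)]^m$ ($m$ fine-grid steps) and $\mu(\omega)$ (one rediscretized coarse step) are order-$p$ consistent approximations of the exact coarse-grid advective amplification $e^{-\im\omega m c}$, with $c=\alpha\delta t/h$, so their difference is governed by their leading truncation errors alone.

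Next I would make the leading truncation error explicit. Writing each semi-Lagrangian symbol in the form $e^{-\im\omega m c}\big(1+\gamma(\varepsilon)(\im\omega)^{p+1}+{\cal O}(\omega^{p+2})\big)$, where $\varepsilon$ is the relevant fractional CFL number and $\gamma$ is the leading error coefficient of the degree-$p$ interpolation, and exponentiating to account for the $m$-fold fine-grid composition, I would obtain a leading numerator coefficient proportional to $m\gamma(\varepsilon^{(\delta t)})-\gamma(\varepsilon^{(m\delta t)})$ multiplying $(\im\omega)^{p+1}$. Under \cref{ass:no-mesh-intersect} the two fractional CFLs lie strictly in $(0,1)$, which I would use to argue that this coefficient does not vanish, so the numerator is precisely of order $\omega^{p+1}$. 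These symbol expansions are the technical core of the argument, and I would derive them (or cite them from \cref{app:eig-est}) rather than reproduce the interpolation algebra in the main text.

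For the denominator $|1-\mu(\omega)e^{-\im m\theta}|$ I would split into the two stated cases. For a non-characteristic frequency, $\theta\not\approx-\omega c$, the product $\mu(\omega)e^{-\im m\theta}$ stays bounded away from $1$ for $\omega={\cal O}(h)$, so the denominator is order $1$ and the ratio inherits the ${\cal O}(\omega^{p+1})$ behaviour of the numerator. On the characteristic direction $\theta\approx-\omega c$ the phase $e^{-\im m\theta}\approx e^{\im\omega m c}$ exactly cancels the leading phase $e^{-\im\omega m c}$ of $\mu(\omega)$, so $\mu(\omega)e^{-\im m\theta}\approx|\mu(\omega)|$ and the denominator reduces, to leading order, to $1-|\mu(\omega)|$; this is consistent with the minimizing frequency $\theta^{\dagger}_i=\tfrac{1}{m}\arg\mu_i$ of \cref{thm:Ei_norm}, since $\arg\mu(\omega)\approx-\omega m c$. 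Here the restriction to odd $p$ is essential: because $p+1$ is then even, the leading error term $\gamma(\varepsilon^{(m\delta t)})(\im\omega)^{p+1}$ is real, so it reduces $|\mu(\omega)|$ away from unity and forces $1-|\mu(\omega)|$ to be precisely of order $\omega^{p+1}$, matching the numerator and giving a ratio of order $1$.

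The step I expect to be the main obstacle is establishing the precise leading-order structure of the semi-Lagrangian symbols: both that the degree-$p$ interpolation produces a purely dissipative (real) leading error for odd $p$---which is what pins $1-|\mu(\omega)|$ at order $\omega^{p+1}$ rather than something smaller---and that the numerator coefficient $m\gamma(\varepsilon^{(\delta t)})-\gamma(\varepsilon^{(m\delta t)})$ is genuinely nonzero under \cref{ass:no-mesh-intersect}, so that both numerator and characteristic-case denominator sit at exactly the same order. For even $p$ the leading error is instead imaginary (dispersive), so $1-|\mu(\omega)|$ is of higher order and the delicate order matching on characteristic components breaks down; this is precisely why the theorem is stated only for odd $p$ and why the even-$p$ case requires separate treatment, as noted in the text.
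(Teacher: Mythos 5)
Your proposal is correct and takes essentially the same approach as the paper's proof: both collapse the numerator to the $\theta$-independent difference $\big|[\lambda(\omega)]^m - \mu(\omega)\big|$, expand it via the semi-Lagrangian eigenvalue estimates of \cref{app:eig-est-SL} to obtain a nonzero multiple of $\omega^{p+1}$, and split the denominator into the non-characteristic (order one) and characteristic (order $\omega^{p+1}$, pinned at that order by the real, dissipative leading error for odd $p$) cases. The only difference is presentational: the paper estimates the denominator through the explicit identity $\big|\wt{A}_{1}(\omega,m\theta)\big|^2 = 2(1-\cos\eta)(1-\xi)+\xi^2$, whereas you use a phase-cancellation argument reducing it to $1-|\mu(\omega)|$ on characteristic components; these amount to the same estimate.
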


\begin{proof}
This proof works by estimating the numerator and denominator of \eqref{eq:cgc_frac_SL} for asymptotically smooth Fourier modes by using the semi-Lagrangian eigenvalue estimates developed in \cref{app:eig-est-SL}.

We first consider the square of the numerator in \eqref{eq:cgc_frac_SL}.
From \eqref{eq:char_comp_symbols_space-time}, recall $\widetilde{A}_{1}(\omega, m \theta) = 1 - \mu(\omega) e^{-\im m \theta}$ and, from \cref{lem:rho_E_space-time}, $\widetilde{A}_{1}^{\rm ideal}(\omega, m \theta) = 1 - \big[ \lambda(\omega) \big]^m e^{-\im m \theta}$.
Since $\Psi = {\cal S}_{p}^{(m \delta t)}$, we take $\mu(\omega)=s^{(m \delta t)}_p(\omega)$, the eigenvalue of the coarse-grid semi-Lagrangian discretization, which we will estimate using \eqref{eq:SL_eig_est}. 
Since $\Phi = {\cal S}_{p}^{(\delta t)}$, we take $\big[ \lambda(\omega) \big]^m = \big[ s_p^{(\delta t)}(\omega) \big]^m$, for which we directly use the estimate of \eqref{eq:SL_ideal_eig_est}. 
The square of the numerator of \eqref{eq:cgc_frac_SL} then becomes
\begin{align} 
\big|
\widetilde{A}_{1}(\omega, m \theta) - \widetilde{A}^{\rm ideal}_{1}(\omega, m \theta)
\big|^2
&=
\Big|
s^{(m \delta t)}_p(\omega) 
-
\big[ s_p^{(\delta t)}(\omega) \big]^m
\Big|^2
\\
&=
\left|
\exp \left( - \frac{\im \omega \alpha m \delta t}{h} \right)
\right|^2
\left|
\Big[
f_{p+1} \big( \varepsilon^{(m \delta t)} \big) 
- m f_{p+1} \big( \varepsilon^{(\delta t)} \big)
\Big] d_{p+1}(\omega)
+ 
{\cal O}(\omega^{p+2})
\right|^2.
\end{align}
%
%
Now substitute $d_{p+1}(\omega) = (-1)^{\frac{p+1}{2}} \omega^{p+1} + {\cal O}(\omega^{p+2})$, as per \eqref{eq:eig_est-D}, and then simplify to give 
\begin{align} \label{eq:rho_frac_SL_num}
\big|
\widetilde{A}_{1}(\omega, m \theta) & - \widetilde{A}^{\rm ideal}_{1}(\omega, m \theta)
\big|^2
=
\Big( \omega^{p+1} 
\Big[  
f_{p+1} \big( \varepsilon^{(m \delta t)} \big) - m f_{p+1} \big( \varepsilon^{(\delta t)} 
\big)
\Big] \Big)^2
\big(1 + {\cal O}(\omega) \big).
\end{align}
Thus, from \eqref{eq:rho_frac_SL_num}, the numerator of \eqref{eq:cgc_frac_SL} satisfies
\begin{align} \label{eq:rho_frac_SL_num_aux}
\big|
\widetilde{A}_{1}(\omega, m \theta) - \widetilde{A}^{\rm ideal}_{1}(\omega, m \theta)
\big| = \omega^{p+1} \widecheck{C}_1 + {\cal O}(\omega^{p+2}),
\end{align}
for asymptotically smooth Fourier modes, with $\widecheck{C}_1$ some positive constant.

We now consider the square of the denominator in \eqref{eq:cgc_frac_SL}.
To do so, we again take $\mu(\omega) = s^{(m \delta t)}_p(\omega)$, which we will estimate by appealing to \eqref{eq:SL_eig_est}, to give 
\begin{align} 
\big|
\widetilde{A}_{1}(\omega, m \theta)
\big|^2
&=
\Big|
1 - s^{(m \delta t)}_p(\omega) e^{- \im m \theta}
\Big|^2,
\\
\label{eq:order-zero-cf-thm-xi_def}
&=
\bigg|
1 - \exp 
\bigg( 
- \im \underbrace{m \bigg[ \theta + \frac{\omega \alpha \delta t}{h} \bigg]}_{=: \eta} 
\bigg)
\Big(
1 - 
\underbrace{
f_{p+1} \big( \varepsilon^{(m \delta t)} \big) d_{p+1} (\omega) + {\cal O}(\omega^{p+2})
}_{=: \xi}
\Big)
\bigg|^2,
\\
&=
\big|
1 - (1 - \xi) \cos \eta
+ 
\im
(1 - \xi) \sin \eta
\big|^2
=
2(1 - \cos \eta) (1 - \xi) + \xi^2.
\end{align}
Now apply $d_{p+1}(\omega) = (-1)^{\frac{p+1}{2}} \omega^{p+1} + {\cal O}(\omega^{p+2})$, as per \eqref{eq:eig_est-D}, to write $1 - \xi = 1 + {\cal O}(\omega^{p+1})$, and $\xi^2 = \Big(
\omega^{p+1}
f_{p+1} \big( \varepsilon^{(m \delta t)} \big) 
\Big)^2
\big( 1 + {\cal O}(\omega) \big)$.
This gives the the following estimate 
\begin{align} \label{eq:rho_frac_SL_den}
\big|
\widetilde{A}_{1}(\omega, m \theta)
\big|^2
=
2 \left[
1 - \cos \left( m \left[ \theta + \frac{ \omega \alpha \delta t}{h}  \right] \right) 
\right]
\Big(
1 + {\cal O}(\omega^{p+1})
\Big)
+
\Big(
\omega^{p+1}
f_{p+1} \big( \varepsilon^{(m \delta t)} \big) 
\Big)^2
\big( 1 + {\cal O}(\omega) \big).
\end{align}
Thus, for asymptotically smooth modes, the denominator of \eqref{eq:cgc_frac_SL} satisfies \\
$\big|
\widetilde{A}_{1}\left(\omega, m \theta \not\approx - \frac{\omega \alpha m \delta t}{h} \right)
\big|
=
\widecheck{C}_2 + {\cal O}(\omega)$, and 
$\big|
\widetilde{A}_{1}\left(\omega, m \theta \approx - \frac{\omega \alpha m \delta t}{h} \right)
\big|
\geq \widecheck{C}_3 \omega^{p+1}$ for some positive constants $\widecheck{C}_2,\widecheck{C}_3$.
Using this in combination with \eqref{eq:rho_frac_SL_num_aux} gives the claimed result of \eqref{eq:cgc_frac_SL}.
\end{proof}

\begin{remark}[Additional structure for dispersive discretizations]
We briefly mentioned earlier that our results for even orders $p$ do not trivially follow from those for odd orders $p$. 
If $p$ is even, then the first term in the truncation error of the discretization is imaginary, rather than real as when $p$ is odd; specifically, the $d_{p+1}(\omega)$ symbol in \eqref{eq:order-zero-cf-thm-xi_def} is imaginary.
The significance of this is that the symbol of the continuous differential operator is imaginary also, and, so, there exist characteristic components for which the symbol of the continuous differential operator cancels the first truncation error term. 
That is, if we denote these modes by $\theta = \theta_*(\omega)$, then we have $\big|
\widetilde{A}_{1}(\omega, m \theta_*(\omega))
\big| = {\cal O}(\omega^{p+2})$. 
Following through the details of the above proof, we see that the coarse-grid correction for these modes is ${\cal O}(\omega^{-1})$, meaning that they are blown up by the solver.
It is unclear, however, to what extent, if any, these modes are present in simulations of the initial-value problem for finite $n_t$, or whether they only occur for the time-periodic problem in the limit of $n_t \to \infty$. 
For example, initial experiments suggest that such modes may only arise for initial-value problems that are much more resolved in time than in space.
In any event, this insight is interesting, and it possibly has a connection with why the approaches in De~Sterck~et~al.\cite{DeSterck_etal_2023_SL} did not prove successful for dispersive discretizations.
We leave more detailed study of this topic for future work.
\end{remark}

One might anticipate that the poor coarse-grid correction of characteristic components described in \cref{thm:cgc_frac_SL} impairs overall two-grid convergence, destroying the possibility of fast convergence. 
For example, in the spatial, steady-state case, the poor coarse-grid correction of asymptotically smooth characteristic components leads to a two-grid convergence factor of one half when using a basic discretization and standard multigrid components.\cite{Brandt_1981,Brandt_Yavneh_1993,Yavneh_1998}
Indeed, \cref{THM:RHO-LWR-BOUND-SL} below shows that, if rediscretizing the semi-Lagrangian method on the coarse grid, MGRIT convergence is not robust with respect to CFL number or coarsening factor, with the two-grid convergence factor 
$\max \limits_{(\omega, \theta) \in [-\pi, \pi) \times \Theta^{\rm low}}
\rho\big( 
\widehat{{\cal E}}(\omega, \theta)
\big )$ 
being significantly larger than unity for certain combinations of these parameters. 
This is consistent with our previous numerical results in References~\citenum{DeSterck_etal_2023_SL,KrzysikThesis2021}, where MGRIT often diverged for such problems. Recall that in ${\cal O}(n_t/m)$ iterations MGRIT converges to the exact solution by sequentially propagating the initial condition across the time domain---something not captured by our LFA estimate of the spectral radius (see the end of \cref{sec:literature_comparison}). 
The aforementioned ``divergence''---which is captured by the LFA estimate---is that occurring on these ``middle iterations'' before the local relaxation scheme has any significant global impact on convergence.
Finally, note that although the context is different, the below theorem is also consistent with the numerical results reported in the final paragraph of Section 6.3.2 of Schmitt~et~al.,\cite{Schmitt_etal_2018} wherein Parareal convergence degraded substantially in the hyperbolic limit of a nonlinear advection-diffusion problem when using a semi-Lagrangian discretization on the coarse grid and $m = {\cal O}(10^4)$.
\begin{theorem}[Convergence factor lower bound] \label{THM:RHO-LWR-BOUND-SL}
Suppose $\Phi = {\cal S}_{p}^{(\delta t)}$, and $\Psi = {\cal S}_{p}^{(m \delta t)}$, with $p$ odd.
Suppose that \cref{ass:no-mesh-intersect} holds.
Then, over all space-time Fourier modes, the MGRIT spectral radius \eqref{eq:rho_E_space-time} satisfies the following lower bound independent of the number of CF-relaxations $\nu$,
\begin{align} \label{eq:rho-SL-lwr-bnd}
\max \limits_{(\omega, \theta) \in [-\pi, \pi) \times \Theta^{\rm low}}
\rho\big( 
\widehat{{\cal E}}(\omega, \theta)
\big )
&\geq
\widecheck{\rho}_p \big( \varepsilon^{(\delta t)} \big)
\big( 1 + {\cal O}(h) \big),
\end{align}
where,
\begin{align} \label{eq:rho-check}
\widecheck{\rho}_p \big( \varepsilon^{(\delta t)} \big)
:=
m \frac{f_{p+1} \big( \varepsilon^{(\delta t)} \big)}
{f_{p+1} \big( \varepsilon^{(m \delta t)} \big) }
-
1,
\quad
\varepsilon^{(\delta t)}
\in
\Upsilon_m,
\end{align}
with $f_{p+1}$ the degree $p+1$ polynomial defined in \eqref{eq:fpoly_def}.

Furthermore, $\widecheck{\rho}_p \big( \varepsilon^{(\delta t)} \big)$ is larger than unity for an interval of fine-grid CFL numbers $\varepsilon^{(\delta t)}$ that quickly covers $\Upsilon_m$ in \eqref{eq:Upsilon_def} as $m$ increases.
More precisely, for $m \geq 2$,
\begin{align} \label{eq:rho-check-interval}
\widecheck{\rho}_p \big( \varepsilon^{(\delta t)} \big)
>
1
\quad
\textrm{when}
\quad
\varepsilon^{(\delta t)} \in \Upsilon_m \cap \left( \frac{2}{3 m}, 1 - \frac{2}{3 m} \right).
\end{align}
\end{theorem}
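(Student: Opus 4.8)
The plan is to prove the two assertions separately. For the lower bound \eqref{eq:rho-SL-lwr-bnd}, I would use that the maximum over all space-time modes dominates the value at any single mode, and evaluate $\rho(\widehat{{\cal E}})$ at the asymptotically smooth characteristic component $\theta = -\omega\alpha\delta t/h$ with $\omega = {\cal O}(h)$. Applying \cref{lem:rho_E_space-time} and reusing the numerator and denominator estimates \eqref{eq:rho_frac_SL_num} and \eqref{eq:rho_frac_SL_den} from the proof of \cref{thm:cgc_frac_SL}, I observe that at this frequency the phase $\eta = m[\theta + \omega\alpha\delta t/h]$ vanishes, so the $1-\cos\eta$ contribution to the denominator drops out and leaves $|\widetilde{A}_1|^2 = (\omega^{p+1}f_{p+1}(\varepsilon^{(m\delta t)}))^2(1+{\cal O}(\omega))$. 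The common factor $\omega^{p+1}$ then cancels against the numerator. The relaxation prefactor satisfies $|\lambda(\omega)|^{m\nu} = 1 + {\cal O}(h)$ because $|\lambda(\omega)| = 1 - {\cal O}(\omega^{p+1})$ on asymptotically smooth modes (from the eigenvalue estimate \eqref{eq:SL_eig_est}); this is exactly what renders the bound independent of $\nu$. Recognizing that the surviving ratio is $|m f_{p+1}(\varepsilon^{(\delta t)})/f_{p+1}(\varepsilon^{(m\delta t)}) - 1| = |\widecheck{\rho}_p(\varepsilon^{(\delta t)})| \geq \widecheck{\rho}_p(\varepsilon^{(\delta t)})$ then yields \eqref{eq:rho-SL-lwr-bnd}.

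For the second assertion, the target inequality $\widecheck{\rho}_p(\varepsilon) > 1$ is equivalent, via \eqref{eq:rho-check}, to $m f_{p+1}(\varepsilon) > 2 f_{p+1}(\varepsilon^{(m\delta t)})$ with $\varepsilon^{(m\delta t)} = m\varepsilon - \lfloor m\varepsilon\rfloor$. I would first record two structural facts about $f_{p+1}$ that follow from its definition \eqref{eq:fpoly_def}: it factors as $f_{p+1}(\varepsilon) = \varepsilon(1-\varepsilon)g_p(\varepsilon)$ with $g_p$ of constant sign on $[0,1]$, and it is symmetric, $f_{p+1}(1-\varepsilon) = f_{p+1}(\varepsilon)$ (for odd $p$ the leading sign is $(-1)^{p+1}=1$). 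Combining the symmetry of $f_{p+1}$ with the identity $m(1-\varepsilon) - \lfloor m(1-\varepsilon)\rfloor = 1 - \varepsilon^{(m\delta t)}$ (valid under \cref{ass:no-mesh-intersect}) gives $\widecheck{\rho}_p(1-\varepsilon) = \widecheck{\rho}_p(\varepsilon)$, so it suffices to treat $\varepsilon \in (\tfrac{2}{3m}, \tfrac12]$. I would then argue branch-by-branch: on $\varepsilon \in (\tfrac{k}{m}, \tfrac{k+1}{m})$ one has $\varepsilon^{(m\delta t)} = m\varepsilon - k$, and the inequality reduces to a rational inequality in $\varepsilon$ to be verified throughout $(\tfrac{2}{3m}, \tfrac12] \cap \Upsilon_m$.

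The main obstacle is precisely this branch analysis: controlling the ratio $f_{p+1}(\varepsilon)/f_{p+1}(\varepsilon^{(m\delta t)})$ uniformly, since the sawtooth $\varepsilon^{(m\delta t)}$ can push the denominator close to its maximizer while the numerator is only linearly small near the excluded endpoint $\varepsilon = \tfrac{2}{3m}$. The threshold $\tfrac{2}{3m}$ is genuinely tight: already for $p=1$, where $f_2(\varepsilon) = \tfrac12\varepsilon(1-\varepsilon)$, one computes on the first branch $\widecheck{\rho}_1(\varepsilon) = \tfrac{(m-1)\varepsilon}{1-m\varepsilon}$, which crosses unity exactly at $\varepsilon = \tfrac{1}{2m-1} \leq \tfrac{2}{3m}$ (with equality at $m=2$). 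Consequently the crude bound $f_{p+1}(\varepsilon^{(m\delta t)}) \leq \max_{[0,1]} f_{p+1}$ is too lossy, and the actual position of $\varepsilon^{(m\delta t)}$ on its branch must be retained. I expect the cleanest route for general odd $p$ is to reduce to this explicit $p=1$ computation by bounding the slowly varying factor $g_p(\varepsilon)/g_p(\varepsilon^{(m\delta t)})$, and it is here that the bulk of the casework — and the supporting material deferred to \cref{app:conv-fac-constant} — would be expended.
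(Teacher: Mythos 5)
Your treatment of the first claim, the lower bound \eqref{eq:rho-SL-lwr-bnd}, is correct and is essentially the paper's own proof: bound the maximum below by the value at an asymptotically smooth characteristic component, reuse the numerator and denominator estimates \eqref{eq:rho_frac_SL_num} and \eqref{eq:rho_frac_SL_den} from the proof of \cref{thm:cgc_frac_SL} (the $1-\cos$ term vanishing at $\theta = -\omega\alpha\delta t/h$), and absorb the relaxation factor $|\lambda(\omega)|^{m\nu} = 1+{\cal O}(h^{p+1})$. Your shortcut $|\widecheck{\rho}_p| \geq \widecheck{\rho}_p$ at the square-root step is legitimate and slightly more elementary than the paper, which instead proves $\widecheck{\rho}_p > 0$ by citing sign- and magnitude-comparison lemmas from Krzysik's thesis.

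The genuine gap is in the second claim, \eqref{eq:rho-check-interval}. Your reduction of $\widecheck{\rho}_p > 1$ to $m f_{p+1}(\varepsilon) > 2 f_{p+1}(\varepsilon^{(m\delta t)})$ and the symmetry reduction to $\varepsilon \in (\tfrac{2}{3m},\tfrac12]$ match the opening of \cref{app:conv-fac-constant}, but everything after that is deferred: you explicitly leave the branch-by-branch verification as ``the bulk of the casework,'' which is precisely the content of the theorem. Moreover, the route you sketch---establish $p=1$ explicitly and transfer to general odd $p$ by bounding the ratio $g_p(\varepsilon)/g_p(\varepsilon^{(m\delta t)})$---cannot be closed with any uniform multiplicative bound. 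First, at $m=2$ the claimed interval is tight: at $\varepsilon = \tfrac{1}{3}$ one has $\varepsilon^{(m\delta t)} = \tfrac{2}{3}$ and, by the symmetry of $f_{p+1}$ about $\tfrac12$, exact equality $m f_{p+1}(\varepsilon) = 2 f_{p+1}(\varepsilon^{(m\delta t)})$ holds for every odd $p$; since $g_p$ inherits the symmetry, \emph{both} of your factors equal $1$ simultaneously there, so near that endpoint the conclusion hinges on comparing the monotone (first-order) behavior of the two sides, which a uniform-ratio bound discards. Second, uniformly in $p$ the ratio $g_p(\varepsilon)/g_p(\varepsilon^{(m\delta t)})$ degrades: by the factored form \eqref{eq:fDelta_squares}, each factor $q^2 - z^2$, $q = 2,\ldots,\tfrac{p-1}{2}$, contributes worst-case slack $\tfrac{q^2-1}{q^2}$, so no constant independent of $p$ survives to beat the finite margin supplied by the $p=1$ factor. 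The paper's proof avoids both problems by working directly with shape properties of $f_{p+1}$ (symmetry, monotone increase on $[0,\tfrac12]$, concavity) and of the sawtooth composite $f_{p+1}(m\varepsilon - \lfloor m\varepsilon\rfloor)$ (period $\tfrac1m$, extrema at $\tfrac{k}{m}$ and $\tfrac{k}{m}+\tfrac{1}{2m}$), splitting $(\tfrac{2}{3m},\tfrac12]$ at $\varepsilon = \tfrac{3}{2m}$, evaluating both sides at the endpoints $\varepsilon = \tfrac{2}{3m}$ and $\tfrac{3}{2m}$ (\cref{lem:fDelta}), and handling $m=2$ by exactly the increasing-versus-decreasing comparison your format cannot deliver.

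A further, minor slip: the sign of $f_{p+1}$ on $(0,1)$ is $(-1)^{(p+1)/2}$, not $(-1)^{p+1}$; e.g., $f_2(z) = \tfrac12 z(z-1) < 0$, not $\tfrac12 z(1-z)$. This is harmless for $\widecheck{\rho}_p$ (the sign cancels in the ratio), but the inequality $m f_{p+1}(\varepsilon) > 2 f_{p+1}(\varepsilon^{(m\delta t)})$ reverses direction when $\tfrac{p+1}{2}$ is odd, so your branch inequalities must be phrased for $|f_{p+1}|$ or the two sign cases tracked separately, as the paper does.
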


\begin{proof}
Since asymptotically smooth characteristic components represent only a subset of all Fourier modes, we immediately get the following lower bound on the worst-case spectral radius in \eqref{eq:rho_E_space-time}
\begin{align} \label{eq:rho-SL-lwr-bnd-aux}
\max \limits_{(\omega, \theta) \in [-\pi, \pi) \times \Theta^{\rm low}}
\rho \big( 
\widehat{{\cal E}}(\omega, \theta)
\big )
&\geq
\left.
\rho \left(\widehat{{\cal E}} \left(\omega, \theta = - \frac{\omega \alpha \delta t}{h} \right)\right)\right|_{\omega = {\cal O}(h)}.
\end{align}
Now we evaluate the (square of the) right-hand side of this equation.
First, consider the factor $\big| \lambda(\omega) \big|^{m \nu}$ in the spectral radius \eqref{eq:rho_E_space-time} that arises from $\nu \in \mathbb{N}_0$ sweeps of CF-relaxation. 
Invoking the eigenvalue estimate for the ideal coarse-grid operator from \eqref{eq:SL_ideal_eig_est}, we have for asymptotically smooth Fourier modes 
\begin{align}
\big| \lambda(\omega) \big|^{m \nu}
=
\Big| \big[ s_p^{(\delta t)}(\omega) \big]^m \Big|^{\nu}
=
\bigg|
\exp\bigg(- \frac{\im \omega \alpha m \delta t}{h} \bigg)
\bigg|^{\nu}
\big| 1 + {\cal O}(\omega^{p+1}) \big|^{\nu}
=
1 + {\cal O}(h^{p+1}).
\end{align}
Notice that, as described in the previous section, this relaxation effectively does nothing for the convergence of asymptotically smooth Fourier modes.

Thus, by invoking the numerator \eqref{eq:rho_frac_SL_num}, and the denominator \eqref{eq:rho_frac_SL_den}, the square of the right-hand side of \eqref{eq:rho-SL-lwr-bnd-aux} is
\begin{align}
\begin{split}
\left.
\rho^2 \left(\widehat{{\cal E}} \left(\omega, \theta = - \frac{\omega \alpha \delta t}{h} \right)\right)\right|_{\omega = {\cal O}(h)}
&=
\big[ 1 + {\cal O}(h^{p+1}) \big]^2
\frac{\Big( \omega^{p+1} 
\Big[  
f_{p+1} \big( \varepsilon^{(m \delta t)} \big) - m f_{p+1} \big( \varepsilon^{(\delta t)} 
\big)
\Big] \Big)^2
\big(1 + {\cal O}(h) \big)}{\Big(
\omega^{p+1}
f_{p+1} \big( \varepsilon^{(m \delta t)} \big) 
\Big)^2
\big( 1 + {\cal O}(h) \big)},
\end{split}
\\
\label{eq:rho-SL-lwr-bnd-aux2}
&\quad=
\left(
m \frac{f_{p+1} \big( \varepsilon^{(\delta t)} \big)}
{f_{p+1} \big( \varepsilon^{(m \delta t)} \big) }
-
1
\right)^2
\frac{1 + {\cal O}(h)}{1 + {\cal O}(h)}
=
\Big( \widecheck{\rho}_p \big( \varepsilon^{(\delta t)} \big) \Big)^2 
\, 
\frac{1 + {\cal O}(h)}{1 + {\cal O}(h)}.
\end{align}
Next, use the geometric expansion $\frac{1 + {\cal O}(h)}{1 + {\cal O}(h)} = 1 + {\cal O}(h)$, then take the square root of both sides of \eqref{eq:rho-SL-lwr-bnd-aux2} and apply that $\sqrt{1 + {\cal O}(h)} = 1 + {\cal O}(h)$.
Plugging the result into \eqref{eq:rho-SL-lwr-bnd-aux} gives the claimed result of \eqref{eq:rho-SL-lwr-bnd}.
When taking the square root of \eqref{eq:rho-SL-lwr-bnd-aux2}, note that $\widecheck{\rho}_p \big( \varepsilon^{(m \delta t)} \big)  > 0$ for all $\varepsilon^{(\delta t)} \in \Upsilon_m$, which follows by combining $\big| m f_{p+1} \big( \varepsilon^{(\delta t)} \big) \big| > \big| {f_{p+1} \big( \varepsilon^{(m \delta t)} \big) } \big|$ (see Krzysik\cite{KrzysikThesis2021} Lemma 4.4),
and 
$\textrm{sign} 
\big( {f_{p+1} \big( \varepsilon^{(m \delta t)} \big) } \big) = \textrm{sign} \big ( m f_{p+1} \big( \varepsilon^{(\delta t)} \big) \big)$ (see Krzysik\cite{KrzysikThesis2021} Lemma B.1).
The remainder of the proof regarding when $ \widecheck{\rho}_p > 1$ is technical and can be found in \cref{app:conv-fac-constant}.
\end{proof}

Note that there may also exist $\varepsilon^{(\delta t)} \in \Upsilon_m$ outside of the interval considered in \eqref{eq:rho-check-interval} for which $\widecheck{\rho}_p \big( \varepsilon^{(\delta t)} \big) > 1$; see \cref{fig:rho_vs_CFL-SL-lwr-bnd}.
However, this interval is sufficient for demonstrating our primary point which is that the interval of $\varepsilon^{(\delta t)}$ for which $\widecheck{\rho}_p \big( \varepsilon^{(\delta t)} \big) > 1$ quickly covers $\Upsilon_m$ as $m$ grows.
%

\subsection{Numerical results}
\label{sec:char_comp_num}

%
\begin{figure}[t!]
\centerline{
\includegraphics[scale=0.425]{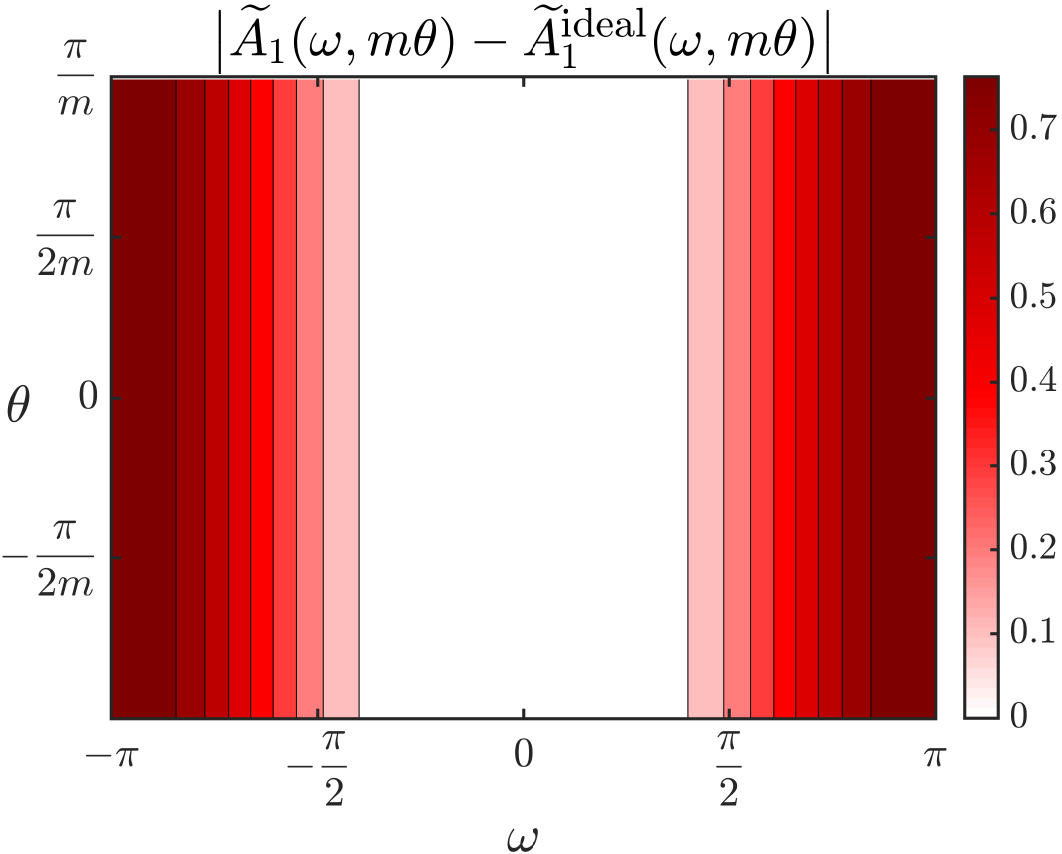}
\quad\quad
\includegraphics[scale=0.425]{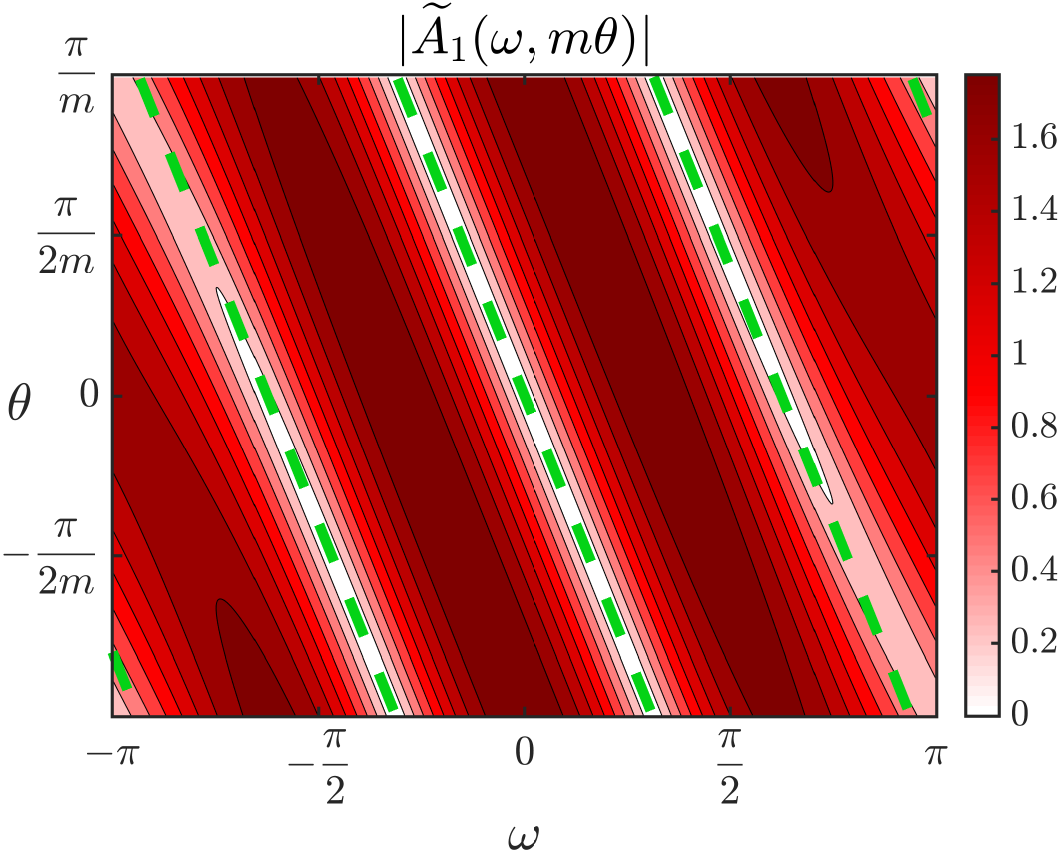}
}
\vspace{2ex}
\hspace{-1ex}
\centerline{
\includegraphics[scale=0.425]{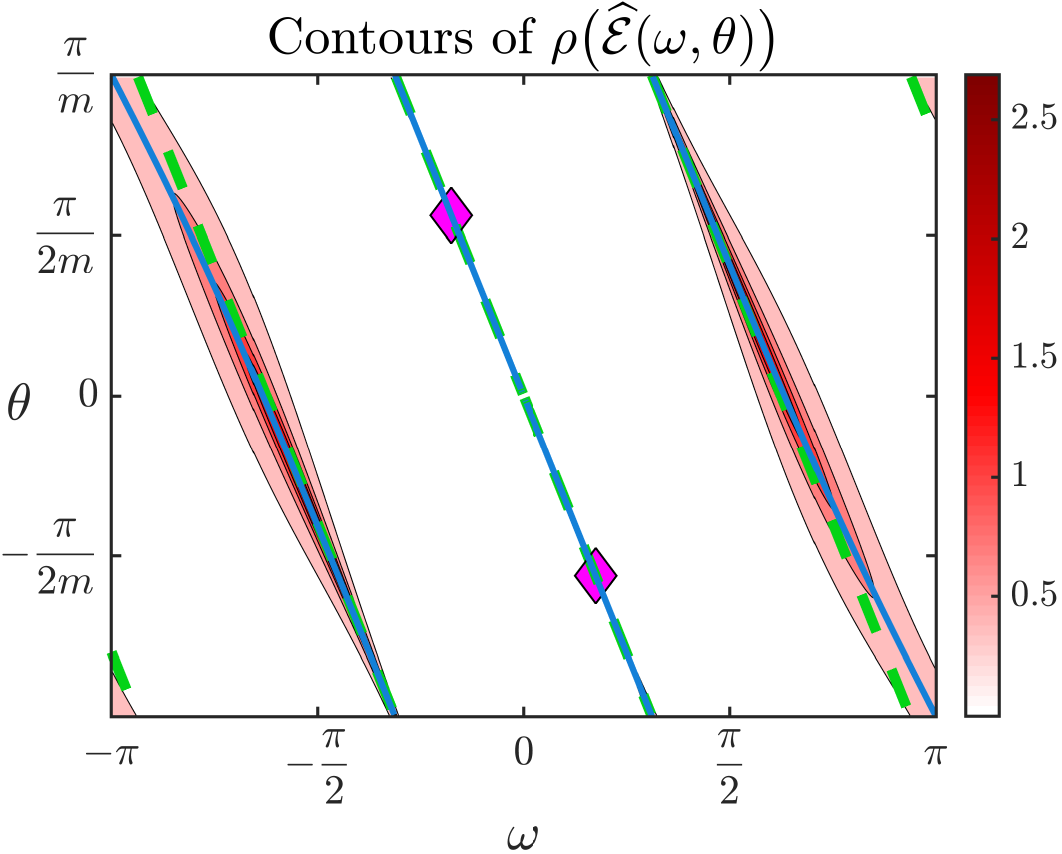}
\hspace{6ex}
\includegraphics[scale=0.425]{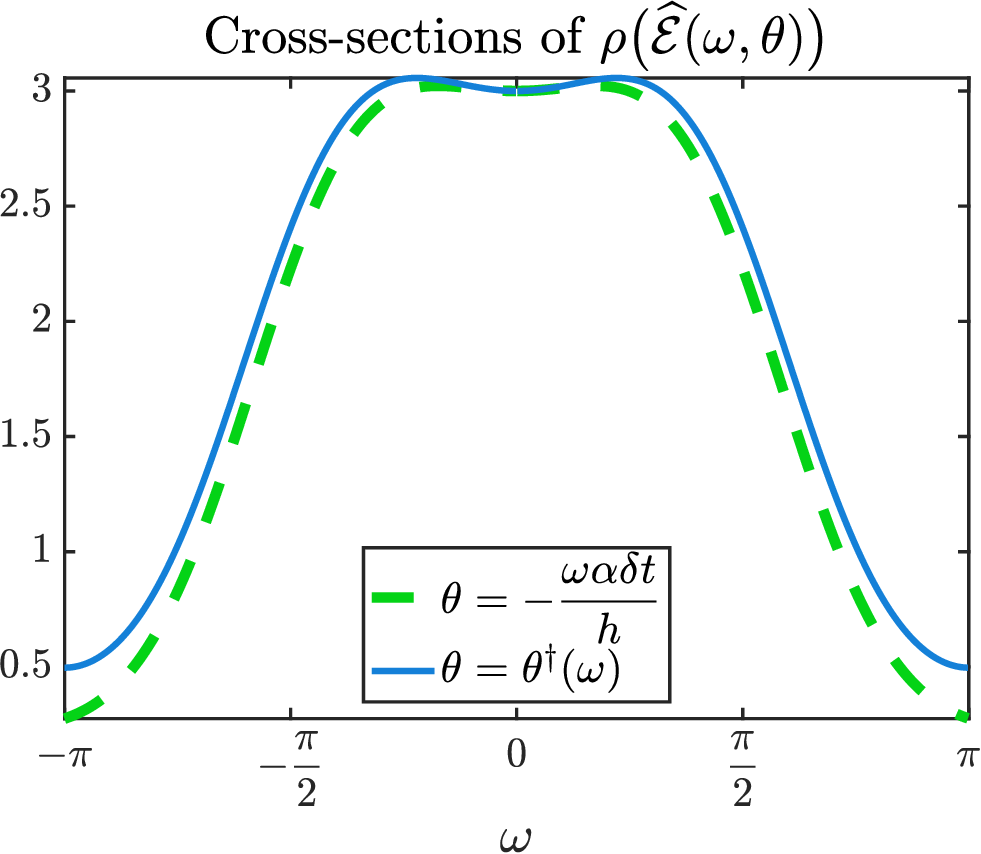}
}
\caption{
Plots in Fourier frequency space of quantities relating to the LFA spectral radius in \cref{lem:rho_E_space-time} for a $p = 3$ semi-Lagrangian discretization of  linear advection using rediscretization on the coarse grid.
The coarsening factor is $m = 4$, FCF-relaxation is used, and the CFL number is $c = 0.8$.
Top left: Difference between symbols of coarse-grid operator and ideal coarse-grid operator.
Top right: Symbol of coarse-grid operator.
Bottom left: Contour of spectral radius evaluated with $1024$ points in $\omega$ and $1024/m$ points in $\theta$, with its maxima over the whole space marked with magenta diamonds.
Bottom right: Cross-sections of the spectral radius along the green and blue lines pictured in the bottom left panel.
Dashed green lines mark characteristic components with frequency $\theta(\omega) = - \frac{\omega \alpha \delta t}{h}$, 
while blue lines mark the slowest converging modes $\theta(\omega) = \theta^{\dagger}(\omega) = \frac{1}{m} \arg \mu(\omega)$ (see \cref{thm:Ei_norm}).
\label{fig:SL_space-time}
}
\end{figure}

In this section, we present numerical results to support the general arguments made in \cref{sec:char_comp_theory} on MGRIT convergence and characteristic components, and those specific to semi-Lagrangian schemes made in \cref{sec:char_comp_SL}.
In \cref{fig:SL_space-time} a visual picture of what is described in \cref{sec:char_comp_theory,sec:char_comp_SL} is given. 
Specifically, for the order $p=3$ semi-Lagrangian discretization, we arbitrarily pick a CFL number of $c = 0.8$ and coarsening factor of $m = 4$, and plot in Fourier space the key quantities from \cref{lem:rho_E_space-time} governing MGRIT convergence for this problem.
To make the contour plots, we sample $\omega \in [-\pi, \pi)$ with 1024 points, and $\theta \in \Theta^{\rm low}$ with 1024$/m$ points.

First, consider the absolute error of the coarse-grid discretization, which is shown in the top left panel of \cref{fig:SL_space-time}.
This quantity is small for $\omega \approx 0$, arising from the fact that both $A_1$ and $A_1^{\rm ideal}$ are consistent with the continuous differential operator ${\cal A}$ from \eqref{eq:A_differential} as $\omega \to 0$ and, thus, are consistent with one another in this regime.
Next, consider the magnitude of the coarse-grid symbol, which is shown in the top right panel of \cref{fig:SL_space-time}.
In this plot, we overlay with dashed green lines characteristic components with frequency $\theta = - \frac{\omega \alpha \delta t}{h}$.
As expected, the symbol approximately vanishes for asymptotically smooth characteristic components (with $\omega \approx 0$), but is bounded away from zero for all other asymptotically smooth components.
Again, this occurs since $A_1$ is consistent with ${\cal A}$ as $\omega \to 0$, and ${\cal A}$ vanishes on characteristic components.
Notice that even for some non-asymptotically smooth characteristic components, that is, $\theta \approx - \frac{\omega \alpha \delta t}{h}$ with $\omega \gg {\cal O}(h)$, this symbol appears to approximately vanish. 

Next, consider the spectral radius itself, which is shown in the bottom row of \cref{fig:SL_space-time}, beginning with the contours shown in the left panel.
On this plot, we again overlay characteristic components, and we also overlay with blue lines the set of slowest converging modes $\theta = \theta^{\dagger} (\omega)$ as given in \cref{thm:Ei_norm}.
Not surprisingly, the spectral radius is large where $|\wt{A}_1|$ approximately vanishes.
As reflected by the green lines overlapping with the blue lines, it appears that for temporally smooth components, with $\omega \approx 0$, the convergence is poorest for the characteristic components.
In fact, \textit{all modes with $\omega \approx 0$ are rapidly damped except those that are characteristic components}; it is not particularly obvious from this plot, however, what the value of the function is along the green and blue lines.
Therefore, we plot the values of $\rho\big( \wh{{\cal E}}(\omega, \theta ) \big)$ along these green and blue lines in the bottom right panel of \cref{fig:SL_space-time}.
From the bottom right panel, it becomes apparent that smooth characteristic components having $\theta = - \frac{\omega \alpha \delta t}{h}$ do indeed diverge, with $\rho \approx 3 > 1$ for these modes.
Finally, notice that, while convergence is certainly poor for asymptotically smooth characteristic components, convergence is poor also for many characteristic components which are not asymptotically smooth.
This is an example of why we say that poor MGRIT convergence stems \textit{at least in part} from the poor coarse-grid correction of asymptotically smooth characteristic components.
%

\begin{figure}[t!]
\centerline{
\includegraphics[scale=0.425]{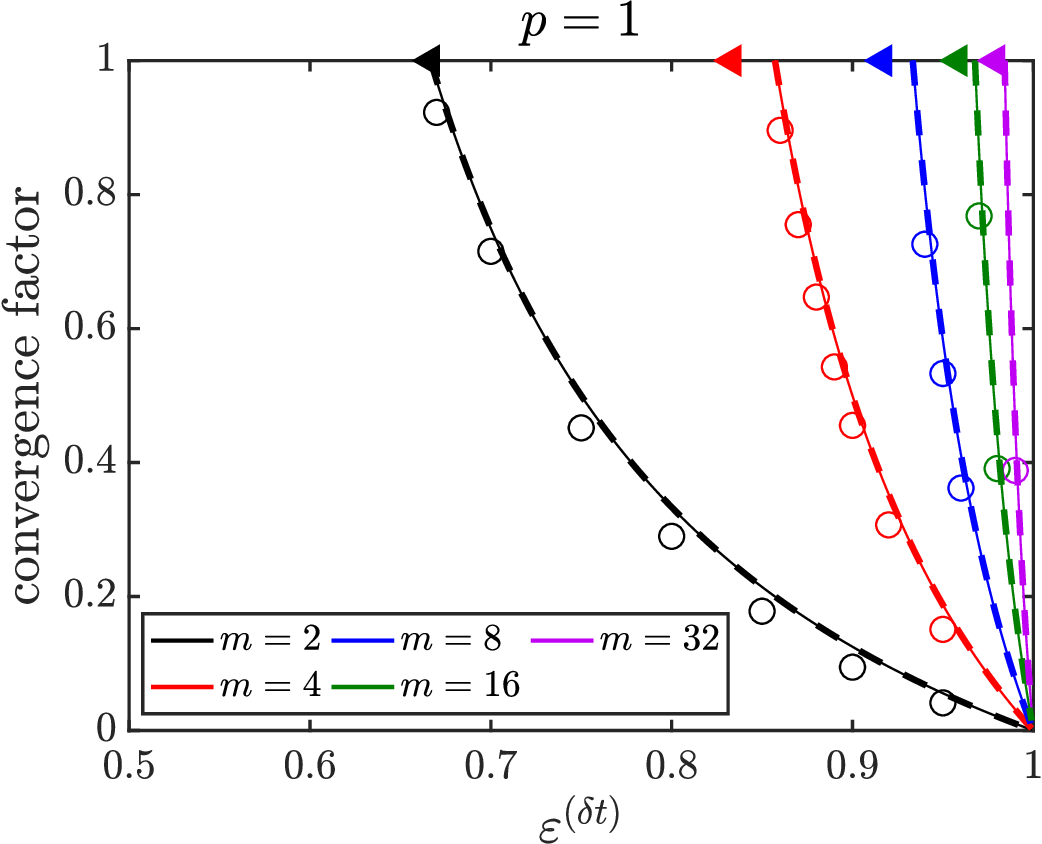}
\quad\quad\quad
\includegraphics[scale=0.425]{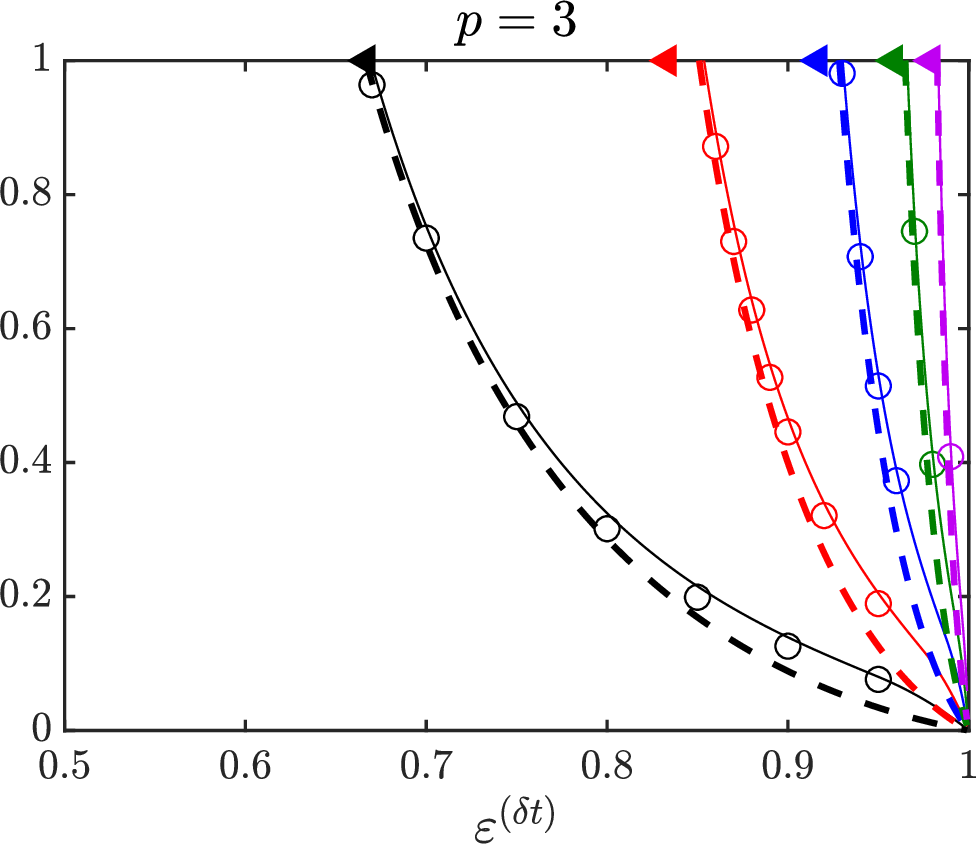}
}
\caption{
Numerical confirmation of \cref{THM:RHO-LWR-BOUND-SL}:
Convergence factors for the semi-Lagrangian discretizations $\Phi = {\cal S}_p^{(\delta t)}$, with $p = 1$ (left), and $p = 3$ (right), as a function of fractional fine-grid CFL number $\varepsilon^{(\delta t)}$ when rediscretizing on the coarse grid, $\Psi = {\cal S}_p^{(m \delta t)}$. 
MGRIT uses FCF-relaxation ($\nu = 1$), and a coarsening factor of $m$.
Thin solid lines are the LFA convergence factor $\max \limits_{(\omega, \theta) \in [-\pi, \pi) \times \Theta^{\rm low}}
\rho\big( 
\widehat{{\cal E}}(\omega, \theta)
\big )$
obtained by discretizing $\omega$ with $128$ points.
Thick dashed lines are the function $\widecheck{\rho}_p \big( \varepsilon^{(\delta t)} \big)$ from \eqref{eq:rho-check} that acts as a lower bound on the LFA convergence factor (see \eqref{eq:rho-SL-lwr-bnd}).
Filled triangle markers are $\varepsilon^{(\delta t)}  = 1 - \frac{2}{3 m}$, the right-hand end point of the interval in \eqref{eq:rho-check-interval} on which $\widecheck{\rho}_p \big( \varepsilon^{(\delta t)} \big)$ is shown to exceed unity.
Open circle markers are experimentally measured effective convergence factors of MGRIT on a finite interval $t \in (0, T]$ taken from Figure 2 of De~Sterck~et~al.\cite{DeSterck_etal_2023_SL}
\label{fig:rho_vs_CFL-SL-lwr-bnd}
}
\end{figure}

We now provide numerical evidence that the LFA spectral radius of MGRIT on infinite time domains given in \cref{cor:Ei_spectral_radius} is an accurate predictor of the effective MGRIT convergence factor on finite time domains, and we also provide numerical verification of its lower bound given in \cref{THM:RHO-LWR-BOUND-SL}.
\Cref{fig:rho_vs_CFL-SL-lwr-bnd} shows effective and LFA MGRIT convergence factors as a function of the CFL number for semi-Lagrangian schemes of orders $p = 1,3$ (note that the quantities shown in \cref{fig:rho_vs_CFL-SL-lwr-bnd} are symmetric over $\varepsilon^{(\delta t)} \in (0, 1)$, so, to better highlight the details we show them only for $\varepsilon^{(\delta t)} \in \big(\frac{1}{2}, 1\big)$).
The thin solid lines in the plots are the worst-case LFA convergence factor $\max \limits_{(\omega, \theta) \in [-\pi, \pi) \times \Theta^{\rm low}}
\rho\big( 
\widehat{{\cal E}}(\omega, \theta)
\big )$, which we evaluate by discretizing $\omega$ with $2^7$ points.
Circle markers overlaid on the plots are numerical data from Figure 2 of De~Sterck~et~al.,\cite{DeSterck_etal_2023_SL} and correspond to convergence factors measured on final MGRIT iterations before stopping criteria are reached and, thus, represent effective MGRIT convergence factors for a finite interval $t \in (0, T]$. 
These MGRIT tests used a uniformly random initial iterate, and a space-time domain discretized with $n_x \times n_t = 2^7 \times 2^{15}$ points.
First, notice that the LFA MGRIT convergence factor provides a very good approximation to the experimentally measured effective convergence factors.
Second, notice that convergence is not robust for these problems, with the solver diverging for most combinations of $m$ and $c$, as indicated by the convergence factor being larger than unity.

Also shown in \cref{fig:rho_vs_CFL-SL-lwr-bnd} as the thick dashed lines is the lower bound on the LFA estimate of the spectral radius (thin solid lines) given in \cref{THM:RHO-LWR-BOUND-SL}. 
This lower bound appears very tight, essentially sitting on top of the LFA estimate in the $p = 1$ case. 
This suggests for $p = 1$ that it is indeed the inadequate coarse-grid correction of asymptotically smooth characteristic components described in \cref{thm:cgc_frac_SL} that determines the overall two-grid convergence factor, at least for problems with convergence factors visible in the plot (i.e., those less than one).
In contrast, the lower bound is slightly less tight in the $p = 3$ case, suggesting that it is not always these modes that determine overall two-grid performance.
This is consistent with what is shown in the example in the bottom right panel of \cref{fig:SL_space-time}.
Nevertheless, the lower bound from \cref{THM:RHO-LWR-BOUND-SL} does  accurately capture the interval for which the overall two-grid convergence factor exceeds unity, and it is clearly capable of demonstrating in the process that MGRIT convergence is not robust for this problem with respect to either $m$ or $c$.
%

\subsection{Improved coarse-grid operators}
\label{sec:char_comp_discussion}
Slow convergence of characteristic components has been widely studied in the spatial multigrid case, and, so, it is useful to consider the solutions proposed there, and to understand to what extent they can be applied in the MGRIT context.
While several potential fixes have been proposed, for which detailed summaries can be found in Section 5 of Yavneh\cite{Yavneh_1998} and Section 7 of  Trottenberg~et~al.,\cite{Trottenberg_etal_2001} it is important to note that none appear to yield multigrid convergence that is as efficient and robust as that for elliptic problems in general. 

One proposed remedy is to use so-called downstream relaxation,\cite{Brandt_Yavneh_1992,
Yavneh_1998,
Yavneh_etal_1998,
Trottenberg_etal_2001} 
which essentially amounts to carrying out relaxation in an order that propagates errors and/or residuals downstream, along characteristics. 
But unfortunately, in the multigrid-in-time context considered here, downstream relaxation is nothing more than sequential time-stepping, the exact procedure that we are trying to avoid!
Other ideas considered include residual overweighting,\cite{Brandt_Yavneh_1993} and accelerating multigrid convergence using a Krylov method.\cite{Oosterlee_Washio_2000}

Considering \eqref{eq:LH_relative}, arguably the most effective way to improve convergence for smooth characteristic components is to increase the absolute accuracy of the coarse-grid operator with respect to the fine-grid operator.
To do this, the coarse-grid operator needs to be designed so that its truncation error better approximates that of the fine-grid operator, at least for smooth characteristic components.
This key idea was first introduced in Yavneh,\cite{Yavneh_1998} and was also later considered by Bank~et~al.\cite{Bank_etal_2006}
In the MGRIT context, that is, \cref{lem:rho_E_space-time}, this amounts to modifying the coarse-grid time-stepping operator so that its truncation error better approximates that of the ideal coarse-grid operator.
An idea related to this was explored in Danieli \& MacLachlan\cite{Danieli_MacLachlan_2023} for explicit temporal discretizations of nonlinear hyperbolic PDEs, where $\Psi$ was directly constructed to match terms in a Taylor expansion of $\Phi^m$, though stability issues limit that approach to very small fine-grid CFL numbers.
A similar idea was also recently used by Vargas et al.\cite{Vargas_etal_2023} to develop effective MGRIT coarse-grid operators for chaotic problems, including the Lorentz system and the Kuramoto--Sivashinsky equation.

This idea of approximately matching truncation errors formed the basis of our recent works in References \citenum{DeSterck_etal_2023_SL,DeSterck_etal_2023_MOL,DeSterck_etal_2023_nonlin}. Perhaps the most relevant of these works to the present context is De~Sterck~et~al.,\cite{DeSterck_etal_2023_SL} where we designed very effective MGRIT coarse-grid operators for semi-Lagrangian discretizations of variable-wave-speed linear advection problems in multiple space dimensions.
The coarse-grid operators in De~Sterck~et~al.\cite{DeSterck_etal_2023_SL} consist of first applying a rediscretized coarse-grid operator, followed by a truncation error correction, which has to be applied in an implicit sense (i.e., it requires a linear solve) to ensure stability.
In fact, the following theorem confirms that the coarse-grid operator from equation (3.20) of De~Sterck~et~al.\cite{DeSterck_etal_2023_SL} does provide a better coarse-grid correction for asymptotically smooth characteristic components than rediscretization does, as considered in \cref{thm:cgc_frac_SL}.
Furthermore, the fact that this coarse-grid operator yields fast and robust convergence for all modes can be seen from results in Figure 3 of De~Sterck~et~al.,\cite{DeSterck_etal_2023_SL} which plots the LFA spectral radius of MGRIT, analogous to those plots in \cref{fig:rho_vs_CFL-SL-lwr-bnd} for rediscretization.
\begin{theorem}[Improved coarse-grid correction]
Suppose $\Phi = {\cal S}_{p}^{(\delta t)}$, with $p$ odd.
Further, suppose that $\Psi 
= 
\Big( I - 
\big[ 
f_{p+1} \big( \varepsilon^{(m \delta t)} \big) 
-
m f_{p+1} \big( \varepsilon^{(\delta t)} \big)
\big] {\cal D}_{p+1} \Big)^{-1} {\cal S}_{p}^{(m \delta t)}$, 
where $f_{p+1}$ is as in \eqref{eq:fpoly_def}, 
$\varepsilon^{(\delta t)}$ is as in \eqref{eq:epsilon_def},
and ${\cal D}_{p+1}$ is as explained in \cref{app:eig-est-SL}.
Suppose the fine-grid CFL number is not an integer.
Then, asymptotically smooth Fourier modes receive a coarse-grid correction that is order $p+2$ small in $\omega$ if they are not characteristic components, while asymptotically smooth characteristic components receive an order one coarse-grid correction in $\omega$, 
\begin{align} \label{eq:rho_frac_SL-mod}
\left.
\frac{\big|
\widetilde{A}_{1}(\omega, m \theta) - \widetilde{A}^{\rm ideal}_{1}(\omega, m \theta)
\big|}
{\big| \widetilde{A}_{1}(\omega, m \theta) \big|}
\right|_{\omega = {\cal O}(h)}
=
\begin{cases}
{\cal O}(\omega^{p+2}), 
\quad & \displaystyle{\theta \not\approx - \frac{\omega \alpha \delta t}{h}},
\\[2ex]
{\cal O}(\omega),
\quad & \displaystyle{\theta \approx - \frac{\omega \alpha \delta t}{h}}.
\end{cases}
\end{align}
\end{theorem}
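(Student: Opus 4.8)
The plan is to re-run the proof of \cref{thm:cgc_frac_SL} with the rediscretized symbol $\mu(\omega) = s_p^{(m\delta t)}(\omega)$ replaced by the symbol of the new, implicitly corrected operator $\Psi$, and then to re-estimate the numerator and denominator of the relative coarse-grid correction factor \eqref{eq:rho_frac_SL-mod}. Writing $\kappa := f_{p+1}(\varepsilon^{(m\delta t)}) - m f_{p+1}(\varepsilon^{(\delta t)})$ for the scalar correction coefficient, and noting that ${\cal S}_p^{(m\delta t)}$ and ${\cal D}_{p+1}$ share the spatial Fourier eigenvectors (with symbols $s_p^{(m\delta t)}(\omega)$ and $d_{p+1}(\omega)$), the symbol of $\Psi$ is
\begin{align}
\mu(\omega) = \big( 1 - \kappa\, d_{p+1}(\omega) \big)^{-1} s_p^{(m\delta t)}(\omega).
\end{align}

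First I would compute this symbol to the needed order. Since $d_{p+1}(\omega) = {\cal O}(\omega^{p+1})$ by \eqref{eq:eig_est-D}, the factor $1 - \kappa\, d_{p+1}(\omega) = 1 + {\cal O}(\omega^{p+1})$ is invertible for asymptotically smooth $\omega$, and a Neumann expansion gives $(1 - \kappa d_{p+1})^{-1} = 1 + \kappa d_{p+1} + {\cal O}(\omega^{2p+2})$. Inserting the estimate $s_p^{(m\delta t)}(\omega) = e^{-\im\omega\alpha m\delta t/h}(1 - f_{p+1}(\varepsilon^{(m\delta t)}) d_{p+1}(\omega) + {\cal O}(\omega^{p+2}))$ from \eqref{eq:SL_eig_est} and multiplying out, the two $f_{p+1}(\varepsilon^{(m\delta t)}) d_{p+1}$ terms cancel, leaving
\begin{align}
\mu(\omega) = e^{-\im\omega\alpha m\delta t/h}\big( 1 - m f_{p+1}(\varepsilon^{(\delta t)}) d_{p+1}(\omega) + {\cal O}(\omega^{p+2}) \big).
\end{align}
The central observation is that this coincides with the estimate \eqref{eq:SL_ideal_eig_est} for the ideal symbol $[\lambda(\omega)]^m = [s_p^{(\delta t)}(\omega)]^m$ up to ${\cal O}(\omega^{p+2})$: the correction $-\kappa{\cal D}_{p+1}$ is engineered precisely to annihilate the leading-order discrepancy between the rediscretized and ideal truncation errors, so $\mu(\omega)$ now agrees with $[\lambda(\omega)]^m$ through order $\omega^{p+1}$.

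The numerator of \eqref{eq:rho_frac_SL-mod} then follows at once. Using $\widetilde{A}_1 - \widetilde{A}_1^{\rm ideal} = ([\lambda(\omega)]^m - \mu(\omega)) e^{-\im m\theta}$ (from \eqref{eq:char_comp_symbols_space-time} and \cref{lem:rho_E_space-time}), which is independent of $\theta$, the agreement just established yields
\begin{align}
\big| \widetilde{A}_1(\omega, m\theta) - \widetilde{A}_1^{\rm ideal}(\omega, m\theta) \big| = \big| [\lambda(\omega)]^m - \mu(\omega) \big| = {\cal O}(\omega^{p+2}),
\end{align}
one order smaller than the ${\cal O}(\omega^{p+1})$ numerator of rediscretization in \eqref{eq:rho_frac_SL_num_aux}. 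For the denominator I would reuse the computation \eqref{eq:order-zero-cf-thm-xi_def}--\eqref{eq:rho_frac_SL_den} essentially verbatim, the only change being that the scalar $\xi$ there becomes $m f_{p+1}(\varepsilon^{(\delta t)}) d_{p+1}(\omega) + {\cal O}(\omega^{p+2})$, since $\mu$ now carries the coefficient $m f_{p+1}(\varepsilon^{(\delta t)})$ in place of $f_{p+1}(\varepsilon^{(m\delta t)})$. This gives $|\widetilde{A}_1| = {\cal O}(1)$ off characteristics, where $1 - \cos(m[\theta + \omega\alpha\delta t/h])$ is bounded below, and $|\widetilde{A}_1| \geq \widecheck{C}\,\omega^{p+1}$ for a positive constant $\widecheck{C}$ on characteristic components, where that cosine factor vanishes and the $\xi^2$ term dominates. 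Dividing the ${\cal O}(\omega^{p+2})$ numerator by each denominator regime produces ${\cal O}(\omega^{p+2})$ off characteristics and ${\cal O}(\omega)$ on characteristics, which is \eqref{eq:rho_frac_SL-mod}.

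The main obstacle is bookkeeping the cancellation in the symbol computation: one must track the ${\cal O}(\omega^{p+1})$ contributions through both the Neumann inverse and its product with $s_p^{(m\delta t)}(\omega)$ and verify that the leading truncation-error mismatch is eliminated (not merely scaled down), since it is this exact cancellation that upgrades every estimate by one power of $\omega$. A secondary check is that the constant $\widecheck{C}$ multiplying $\omega^{p+1}$ in the on-characteristic denominator is genuinely nonzero, i.e. that $f_{p+1}(\varepsilon^{(\delta t)}) \neq 0$; this holds under the non-integer CFL hypothesis by the sign and magnitude properties of $f_{p+1}$ already established (see Krzysik\cite{KrzysikThesis2021}, Lemmas 4.4 and B.1) and invoked in the proof of \cref{THM:RHO-LWR-BOUND-SL}. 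Everything else is a routine repetition of the estimates in \cref{thm:cgc_frac_SL}.
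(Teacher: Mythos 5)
Your proposal is correct and follows essentially the same route as the paper: the paper's proof likewise re-runs the argument of \cref{thm:cgc_frac_SL}, replacing the rediscretized symbol with the symbol of the corrected operator, whose estimate $\mu(\omega) = e^{-\im \omega \alpha m \delta t / h}\big(1 - m f_{p+1}(\varepsilon^{(\delta t)}) d_{p+1}(\omega) + {\cal O}(\omega^{p+2})\big)$ is exactly the content of \cref{lem:eig-est-SL-mod}, proved there by the same Neumann expansion you carry out inline. Your added remark that $f_{p+1}(\varepsilon^{(\delta t)}) \neq 0$ under the non-integer fine-grid CFL hypothesis (so the on-characteristic denominator constant is genuinely positive) is a correct and worthwhile detail that the paper leaves implicit.
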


\begin{proof}
This proof proceeds analogously to that of \cref{thm:cgc_frac_SL}.
We first consider the square of the denominator in \eqref{eq:rho_frac_SL-mod}.
For eigenvalue estimates of $\Psi$ we use \eqref{eq:eig-est-SL-mod} of \cref{lem:eig-est-SL-mod} to obtain
\begin{align} \label{eq:rho_frac_SL-mod_den}
\begin{split}
\big|
\widetilde{A}_{1}(\omega, m \theta)
\big|^2
&=
2 \left[
1 - \cos \left( m \left[ \theta + \frac{ \omega \alpha \delta t}{h}  \right] \right) 
\right]
\Big(
1 + {\cal O}(\omega^{p+1})
\Big)
\\
&\quad\quad\quad+
\Big(
\omega^{p+1}
m f_{p+1} \big( \varepsilon^{(\delta t)} \big) 
\Big)^2
\big( 1 + {\cal O}(\omega) \big).
\end{split}
\end{align}
Just as in \cref{thm:cgc_frac_SL}, we have for asymptotically smooth modes that \\
$\big|
\widetilde{A}_{1}\left(\omega, m \theta \not\approx - \frac{\omega \alpha m \delta t}{h} \right)
\big|
=
\widecheck{C}_4 + {\cal O}(\omega)$, and 
$\big|
\widetilde{A}_{1}\left(\omega, m \theta \approx - \frac{\omega \alpha m \delta t}{h} \right)
\big|
\geq \widecheck{C}_5 \omega^{p+1}$ for some positive constants $\widecheck{C}_4, \widecheck{C}_5$.

Now consider the square of the numerator in \eqref{eq:rho_frac_SL-mod}.
For eigenvalue estimates of the ideal coarse-grid operator we use \eqref{eq:SL_ideal_eig_est}, and for $\Psi$ we again use \eqref{eq:eig-est-SL-mod}.
This gives
\begin{align} 
\begin{split}
\big|
\widetilde{A}_{1}(\omega, m \theta) - \widetilde{A}^{\rm ideal}_{1}(\omega, m \theta)
\big|^2
&=
\left|
\exp \left( - \frac{\im \omega \alpha m \delta t}{h} \right)
\right|^2 
\bigg|
\Big(1
-
m f_{p+1} \big( \varepsilon^{(\delta t)} \big) 
d_{p+1}(\omega)
+ {\cal O}(\omega^{p+2})
\Big)
\\ 
&\quad-
\Big(
1
- m f_{p+1} \big( \varepsilon^{(\delta t)} \big) d_{p+1}(\omega)
+ 
{\cal O}(\omega^{p+2})
\Big)
\bigg|^2.
\end{split}
\end{align}
For all asymptotically smooth Fourier modes we have $\big|
\widetilde{A}_{1}(\omega, m \theta) - \widetilde{A}^{\rm ideal}_{1}(\omega, m \theta)
\big| = {\cal O}(\omega^{p+2})$;
combining this with the above result for $\big|
\widetilde{A}_{1}(\omega, m \theta)
\big|$ results in \eqref{eq:rho_frac_SL-mod}.
\end{proof}

\section{Conclusions}
\label{sec:conclusions}

We have developed an LFA convergence theory for the two-level iterative parallel-in-time method MGRIT, which also applies to the Parareal method.
The theory is presented in closed form, with analytical expressions for the LFA results for the norm and spectral radius of MGRIT applied to infinite time domains, approximating the effective convergence factor of MGRIT on finite time domains.
Final convergence results from our LFA theory closely resemble several existing results from the literature that were derived via alternate means; however, our LFA framework is uniquely placed to shed light on the poor convergence of MGRIT for advection-dominated problems when using the standard approach of rediscretizing on the coarse grid. 
In essence, this is because our theory can be used to describe convergence of space-time Fourier modes, allowing for straightforward comparison with existing Fourier-based analyses in the spatial, steady-state multigrid setting used to describe convergence issues there.\cite{Brandt_1981,Yavneh_1998}

We find that when using a rediscretized coarse-grid operator for MGRIT, convergence issues arise that are due, at least in part, to an inadequate coarse-grid correction of a subset of asymptotically smooth Fourier modes known as characteristic components.
It is well known that an inadequate coarse-grid correction of these same modes also gives rise to poor convergence for spatial multigrid methods when applied to steady-state advection-dominated problems.
For a class of semi-Lagrangian discretizations of linear advection problems, we prove that this inadequate coarse-grid correction precludes robust MGRIT convergence when using rediscretization on the coarse grid.
In De~Sterck~et~al.,\cite{DeSterck_etal_2023_MOL} we again use this framework to investigate MGRIT convergence for classical method-of-lines type discretizations of advection problems.

Leveraging this connection to the understanding of slow convergence for spatial multigrid solvers is key to developing improved parallel-in-time solvers for advection-dominated problems, by generalizing to the MGRIT setting techniques used to improve convergence.
This approach has already proven successful in our work for semi-Lagrangian discretizations\cite{DeSterck_etal_2023_SL} and, also for method-of-lines discretizations for both linear advection equations,\cite{DeSterck_etal_2023_MOL} and for nonlinear hyperbolic PDEs with shocks.\cite{DeSterck_etal_2023_nonlin}

One direction for future work is to extend the closed-form LFA theory developed here to the case of three-level MGRIT, to better understand the impacts of inexact coarse-grid solves on characteristic components.
Another possible direction would be to use the LFA framework in similar ways as has been done for spatial multigrid, by optimizing algorithmic parameters  within relaxation and coarse-grid correction.\cite{Brown_etal_2021}

\section*{Acknowledgments}

We are very grateful to Irad Yavneh pointing us to his work in Reference~\citenum{Yavneh_1998}, and for initially explaining to us the possibility of a connection between MGRIT and classical spatial multigrid methods for advection-dominated problems from the perspective of characteristic components.
Constructive comments from referees are also greatly acknowledged.

\bibliography{mgrit-lfa-char-components}

\begin{thebibliography}{10}

\bibitem{Gander_2015}
Gander MJ.
\newblock 50 Years of Time Parallel Time Integration.
\newblock In: Contrib. Math. Comput. Sci. Springer International Publishing;
  2015. p. 69--113.

\bibitem{Ong_Schroder_2020}
Ong BW, and Schroder JB.
\newblock Applications of time parallelization.
\newblock Comput Vis Sci. 2020;{\bf 23}(1).

\bibitem{Falgout_etal_2014}
Falgout RD, Friedhoff S, Kolev TV, MacLachlan SP, and Schroder JB.
\newblock Parallel time integration with multigrid.
\newblock SIAM J Sci Comput. 2014;{\bf 14}(1):951--952.

\bibitem{Lions_etal_2001}
Lions JL, Maday Y, and Turinici G.
\newblock R{\'e}solution d'EDP par un sch{\'e}ma en temps parar{\'e}el.
\newblock C R Acad Sci-Series I-Mathematics. 2001;{\bf 332}(7):661--668.

\bibitem{Chen_etal_2014}
Chen F, Hesthaven JS, and Zhu X.
\newblock On the use of reduced basis methods to accelerate and stabilize the
  {P}arareal method.
\newblock In: Reduced Order Methods for modeling and computational reduction.
  Springer; 2014. p. 187--214.

\bibitem{Dai_Maday_2013}
Dai X, and Maday Y.
\newblock Stable {P}arareal in time method for first-and second-order
  hyperbolic systems.
\newblock SIAM J Sci Comput. 2013;{\bf 35}(1):A52--A78.

\bibitem{DeSterck_etal_2021}
{De Sterck} H, Falgout RD, Friedhoff S, Krzysik OA, and MacLachlan SP.
\newblock {Optimizing multigrid reduction-in-time and {P}arareal coarse-grid
  operators for linear advection}.
\newblock Numer Linear Algebra Appl. 2021;{\bf 28}(4).

\bibitem{Dobrev_etal_2017}
Dobrev VA, Kolev T, Petersson NA, and Schroder JB.
\newblock Two-Level Convergence Theory for Multigrid Reduction in Time
  ({MGRIT}).
\newblock {SIAM} J Sci Comput. 2017;{\bf 39}(5):S501--S527.

\bibitem{Gander_Vandewalle_2007}
Gander MJ, and Vandewalle S.
\newblock Analysis of the {P}arareal time-parallel time-integration method.
\newblock {SIAM} J Sci Comput. 2007;{\bf 29}(2):556--578.

\bibitem{Gander_2008}
Gander MJ.
\newblock Analysis of the {P}arareal algorithm applied to hyperbolic problems
  using characteristics.
\newblock Soc Esp Mat Apl. 2008;{\bf 42}:21--35.

\bibitem{Gander_Lunet_2020}
Gander MJ, and Lunet T.
\newblock Toward error estimates for general space-time discretizations of the
  advection equation.
\newblock Comput Vis Sci. 2020;{\bf 23}(1-4).

\bibitem{Hessenthaler_etal_2018}
Hessenthaler A, Nordsletten D, Röhrle O, Schroder JB, and Falgout RD.
\newblock Convergence of the multigrid reduction in time algorithm for the
  linear elasticity equations.
\newblock Numer Linear Algebra Appl. 2018;{\bf 25}(3):e2155.

\bibitem{Howse_etal_2019}
Howse AJM, De~Sterck H, Falgout RD, MacLachlan S, and Schroder J.
\newblock Parallel-in-time multigrid with adaptive spatial coarsening for the
  linear advection and inviscid {B}urgers equations.
\newblock {SIAM} J Sci Comput. 2019;{\bf 41}(1):A538--A565.

\bibitem{Howse2017_thesis}
Howse A.
\newblock Nonlinear Preconditioning Methods for Optimization and
  Parallel-In-Time Methods for 1D Scalar Hyperbolic Partial Differential
  Equations.
\newblock University of Waterloo. Waterloo, Canada; 2017.

\bibitem{KrzysikThesis2021}
Krzysik OA.
\newblock {Multilevel parallel-in-time methods for advection-dominated PDEs}.
\newblock Monash University. 2021;.

\bibitem{Nielsen_etal_2018}
Nielsen AS, Brunner G, and Hesthaven JS.
\newblock Communication-aware adaptive {P}arareal with application to a
  nonlinear hyperbolic system of partial differential equations.
\newblock J Comput Phys. 2018;{\bf 371}:483--505.

\bibitem{Ruprecht_Krause_2012}
Ruprecht D, and Krause R.
\newblock Explicit parallel-in-time integration of a linear acoustic-advection
  system.
\newblock Comput \& Fluids. 2012;{\bf 59}:72--83.

\bibitem{Ruprecht_2018}
Ruprecht D.
\newblock Wave propagation characteristics of {P}arareal.
\newblock Comput Vis Sci. 2018;{\bf 19}(1-2):1--17.

\bibitem{Schmitt_etal_2018}
Schmitt A, Schreiber M, Peixoto P, and Sch{\"a}fer M.
\newblock A numerical study of a semi-{L}agrangian {P}arareal method applied to
  the viscous {B}urgers equation.
\newblock Comput Vis Sci. 2018;{\bf 19}(1-2):45--57.

\bibitem{Schroder_2018}
Schroder JB.
\newblock On the Use of Artificial Dissipation for Hyperbolic Problems and
  Multigrid Reduction in Time ({MGRIT}); 2018.
\newblock LLNL Tech Report LLNL-TR-750825.

\bibitem{Steiner_etal_2015}
Steiner J, Ruprecht D, Speck R, and Krause R.
\newblock Convergence of {P}arareal for the {N}avier-{S}tokes equations
  depending on the {R}eynolds number.
\newblock In: Numerical Mathematics and Advanced Applications-ENUMATH 2013.
  Springer; 2015. p. 195--202.

\bibitem{McDonald_etal_2018}
McDonald E, Pestana J, and Wathen A.
\newblock Preconditioning and Iterative Solution of All-at-Once Systems for
  Evolutionary Partial Differential Equations.
\newblock {SIAM} J Sci Comput. 2018;{\bf 40}(2):A1012--A1033.

\bibitem{Gander_etal_2019}
Gander MJ, Halpern L, Rannou J, and Ryan J.
\newblock A Direct Time Parallel Solver by Diagonalization for the Wave
  Equation.
\newblock {SIAM} J Sci Comput. 2019;{\bf 41}(1):A220--A245.

\bibitem{Gander_Wu_2020}
Gander MJ, and Wu SL.
\newblock A Diagonalization-Based {P}arareal Algorithm for Dissipative and Wave
  Propagation Problems.
\newblock SIAM J Numer Anal. 2020;{\bf 58}(5):2981--3009.

\bibitem{Liu_Wu_2020}
Liu J, and Wu SL.
\newblock A fast block $\alpha$-circulant preconditoner for all-at-once systems
  from wave equations.
\newblock {SIAM} J Matrix Anal Appl. 2020;{\bf 41}(4):1912--1943.

\bibitem{Liu_etal_2022}
Liu J, Wang XS, Wu SL, and Zhou T.
\newblock A well-conditioned direct {PinT} algorithm for first- and
  second-order evolutionary equations.
\newblock Adv Comput Math. 2022;{\bf 48}(3).

\bibitem{Maday_Ronquist_2008}
Maday Y, and R{\o}nquist EM.
\newblock Parallelization in time through tensor-product space{\textendash}time
  solvers.
\newblock C R Acad Sci Paris. 2008;{\bf 346}(1-2):113--118.

\bibitem{DeSterck_etal_2019}
{De Sterck} H, Friedhoff S, Howse AJM, and MacLachlan SP.
\newblock Convergence analysis for parallel-in-time solution of hyperbolic
  systems.
\newblock Numer Linear Algebra Appl. 2020;{\bf 27}(1):e2271.

\bibitem{Friedhoff_MacLachlan2015}
Friedhoff S, and Mac{L}achlan S.
\newblock A generalized predictive analysis tool for multigrid methods.
\newblock Numer Linear Alg Appl. 2015;{\bf 22}:618--647.

\bibitem{Gander_etal_2018}
Gander MJ, Kwok F, and Zhang H.
\newblock Multigrid interpretations of the {P}arareal algorithm leading to an
  overlapping variant and {MGRIT}.
\newblock Comput Vis Sci. 2018;{\bf 19}(3-4):59--74.

\bibitem{Hessenthaler_etal_2020}
Hessenthaler A, Southworth BS, Nordsletten D, R\"{o}hrle O, Falgout RD, and
  Schroder JB.
\newblock Multilevel convergence analysis of multigrid-reduction-in-time.
\newblock {SIAM} J Sci Comput. 2020;{\bf 42}(2):A771--A796.

\bibitem{Southworth_2019}
Southworth BS.
\newblock Necessary conditions and tight two-level convergence bounds for
  {P}arareal and multigrid reduction in time.
\newblock SIAM J Matrix Anal Appl. 2019;{\bf 40}(2):564--608.

\bibitem{Brandt_1977}
Brandt A.
\newblock Multi-level adaptive solutions to boundary-value problems.
\newblock Math Comp. 1977;{\bf 31}(138):333--333.

\bibitem{Brandt_1981}
Brandt A.
\newblock Multigrid Solvers for Non-Elliptic and Singular-Perturbation
  Steady-State Problems; 1981.
\newblock The Weizmann Institute of Science. Rehovot, Israel. (unpublished).

\bibitem{Stueben_Trottenberg_1982}
Stüben K, and Trottenberg U.
\newblock Multigrid methods: Fundamental algorithms, model problem analysis and
  applications.
\newblock In: Lecture Notes in Mathematics. Springer Berlin Heidelberg; 1982.
  p. 1--176.

\bibitem{Vandewalle_Horton_1995}
Vandewalle S, and Horton G.
\newblock {F}ourier mode analysis of the multigrid waveform relaxation and
  time-parallel multigrid methods.
\newblock Computing. 1995;{\bf 54}(4):317--330.

\bibitem{Trottenberg_etal_2001}
Trottenberg U, Oosterlee CW, and Schuller A.
\newblock Multigrid.
\newblock Academic press; 2001.

\bibitem{Wienands_Joppich_2005}
Wienands RR, and Joppich WW.
\newblock Practical {F}ourier analysis for multigrid methods.
\newblock Boca Raton, FL: Chapman \& Hall/CRC; 2005.

\bibitem{Yavneh_1998}
Yavneh I.
\newblock Coarse-Grid Correction for Nonelliptic and Singular Perturbation
  Problems.
\newblock SIAM J Sci Comput. 1998;{\bf 19}(5):1682--1699.

\bibitem{Friedhoff_MacLachlan_2013}
Friedhoff S, MacLachlan S, and Börgers C.
\newblock Local {F}ourier Analysis of Space-Time Relaxation and Multigrid
  Schemes.
\newblock SIAM J Sci Comput. 2013;{\bf 35}(5):S250--S276.

\bibitem{Gander_etal_2013}
Gander MJ, Jiang YL, Song B, and Zhang H.
\newblock Analysis of Two {P}arareal Algorithms for Time-Periodic Problems.
\newblock SIAM J Sci Comput. 2013;{\bf 35}(5):A2393--A2415.

\bibitem{Gander_Neumuller_2016}
Gander MJ, and Neumuller M.
\newblock Analysis of a new space-time parallel multigrid algorithm for
  parabolic problems.
\newblock SIAM J Sci Comput. 2016;{\bf 38}(4):A2173--A2208.

\bibitem{Brown_etal_2021}
Brown J, He Y, MacLachlan S, Menickelly M, and Wild SM.
\newblock Tuning multigrid methods with robust optimization and local {F}ourier
  analysis.
\newblock SIAM J Sci Comput. 2021;{\bf 43}(1):A109--A138.

\bibitem{Southworth_etal_2021}
Southworth BS, Mitchell W, Hessenthaler A, and Danieli F.
\newblock {Tight Two-Level Convergence of Linear {P}arareal and MGRIT:
  Extensions and Implications in Practice}.
\newblock In: Parallel-in-Time Integration Methods. Springer International
  Publishing; 2021. p. 1--31.

\bibitem{Brandt_Yavneh_1993}
Brandt A, and Yavneh I.
\newblock Accelerated Multigrid Convergence and High-Reynolds Recirculating
  Flows.
\newblock SIAM J Sci Comput. 1993;{\bf 14}(3):607--626.

\bibitem{Oosterlee_Washio_2000}
Oosterlee CW, and Washio T.
\newblock Krylov Subspace Acceleration of Nonlinear Multigrid with Application
  to Recirculating Flows.
\newblock {SIAM J Sci Comput}. 2000;{\bf 21}(5):1670--1690.

\bibitem{Wan_Chan_2003}
Wan WL, and Chan TF.
\newblock A phase error analysis of multigrid methods for hyperbolic equations.
\newblock SIAM J Sci Comput. 2003;{\bf 25}(3):857--880.

\bibitem{Bank_etal_2006}
Bank RE, Wan JWL, and Qu Z.
\newblock Kernel Preserving Multigrid Methods for Convection-Diffusion
  Equations.
\newblock SIAM J Matrix Anal Appl. 2006;{\bf 27}(4):1150--1171.

\bibitem{Yavneh_Weinzierl_2012}
Yavneh I, and Weinzierl M.
\newblock Nonsymmetric Black Box multigrid with coarsening by three.
\newblock Numerical Linear Algebra with Applications. 2012;{\bf
  19}(2):194--209.

\bibitem{DeSterck_etal_2023_SL}
De~Sterck H, Falgout RD, and Krzysik OA.
\newblock {Fast multigrid reduction-in-time for advection via modified
  semi-Lagrangian coarse-grid operators}.
\newblock SIAM J Sci Comput. 2023;{\bf 45}(4):A1890--A1916.

\bibitem{DeSterck_etal_2023_MOL}
{De Sterck} H, Falgout RD, Krzysik OA, and Schroder JB.
\newblock Efficient multigrid reduction-in-time for method-of-lines
  discretizations of linear advection.
\newblock J Sci Comput. 2023;{\bf 96}(1).

\bibitem{DeSterck_etal_2023_nonlin}
{De Sterck} H, Falgout RD, Krzysik OA, and Schroder JB.
\newblock Parallel-in-time solution of scalar nonlinear conservation laws;.
\newblock \href{https://arxiv.org/abs/2401.04936}{arXiv:2401.04936}.

\bibitem{Hessenthaler_etal_2022}
Hessenthaler A, Falgout RD, Schroder JB, de~Vecchi A, Nordsletten D, and
  Röhrle O.
\newblock Time-periodic steady-state solution of fluid-structure interaction
  and cardiac flow problems through multigrid-reduction-in-time.
\newblock Comput Methods Appl Mech Engrg. 2022;{\bf 389}:114368.

\bibitem{Durran_2010}
Durran DR.
\newblock Numerical Methods for Fluid Dynamics.
\newblock 2nd ed. Springer New York; 2010.

\bibitem{Falcone_Ferretti_2014}
Falcone M, and Ferretti R.
\newblock Semi-Lagrangian Approximation Schemes for Linear and Hamilton Jacobi
  Equations.
\newblock CAMBRIDGE; 2014.

\bibitem{Brandt_Yavneh_1992}
Brandt A, and Yavneh I.
\newblock On multigrid solution of high-reynolds incompressible entering flows.
\newblock J Comput Phys. 1992;{\bf 101}(1):151--164.

\bibitem{Yavneh_etal_1998}
Yavneh I, Venner CH, and Brandt A.
\newblock Fast Multigrid Solution of the Advection Problem with Closed
  Characteristics.
\newblock SIAM J Sci Comput. 1998;{\bf 19}(1):111--125.

\bibitem{Danieli_MacLachlan_2023}
Danieli F, and MacLachlan S.
\newblock Multigrid reduction in time for non-linear hyperbolic equations.
\newblock Electron Trans Numer Anal. 2023;{\bf 58}:43--65.

\bibitem{Vargas_etal_2023}
Vargas DA, Falgout RD, Günther S, and Schroder JB.
\newblock Multigrid reduction in time for chaotic dynamical systems.
\newblock {SIAM} J Sci Comput. 2023;{\bf 45}(4):A2019--A2042.

\end{thebibliography}

\appendix

\section{Error propagation of the constant mode}
\label{app:sm:lem:const_mode}

\begin{lemma} \label{sm:lem:const_mode}
Suppose there exists an index $i_* \in \{1, \ldots, n_x\}$ such that $\lambda_{i_*}^m = \mu_{i_*}$. 
Then, the associated error propagator \eqref{eq:Ei_def} is
\begin{align} \label{sm:eq:Ei_const_mode_zero}
{\cal E}_{i_*} = 0.
\end{align}
\end{lemma}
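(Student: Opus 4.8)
The decisive observation is that $\lambda_{i_*}=\mu_{i_*}=1$ forces $\mu_{i_*}=1=\lambda_{i_*}^m$; that is, in this single eigendirection the coarse-grid coefficient coincides with the \emph{ideal} one. The plan is therefore to recognize the block ${\cal E}_{i_*}$ in \eqref{eq:Ei_def} as the full MGRIT error propagator of a scalar ($n_x=1$) problem with fine-grid coefficient $\lambda_{i_*}$ and coarse-grid coefficient $\mu_{i_*}=\lambda_{i_*}^m$, and then invoke the single-iteration exactness of MGRIT with an ideal coarse-grid operator noted in \cref{sec:MGRIT}. Indeed, each constituent matrix \eqref{eq:A0i_def}--\eqref{eq:SCFi_def} is exactly the object the global construction produces from a scalar time-stepping coefficient, so ${\cal E}_{i_*}$ is literally the error propagator of the scalar recursion $\bm{u}_{n+1}=\lambda_{i_*}\bm{u}_n+\bm{g}_{n+1}$ solved with coarse operator $\mu_{i_*}$. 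Since $\mu_{i_*}=\lambda_{i_*}^m$, this scalar coarse operator equals $\Phi^m$, and the exactness property gives ${\cal E}_{i_*}=0$. I would stress that this exactness is purely algebraic and needs only invertibility of $A_{1,i_*}$, so it remains valid even though \cref{ass:stability} is violated here; one therefore \emph{cannot} invoke the Fourier-symbol route of \cref{thm:eigmat-error-prop} directly (it assumes $|\mu_i|<1$), although its numerator $\lambda_{i_*}^m-\mu_{i_*}=0$ in \eqref{eq:f_def} correctly foreshadows the result.

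To make this rigorous by direct substitution, I would set $\lambda_{i_*}=\mu_{i_*}=1$ in \eqref{eq:A0i_def}--\eqref{eq:SCFi_def} and use $v(1)=\bm{1}$, obtaining $A_{0,i_*}=I_{n_t}-L_{n_t}$, $A_{1,i_*}=I_{n_t/m}-L_{n_t/m}$, $S^{\rm F}_{i_*}=I_{n_t/m}\otimes(\bm{1}\bm{e}_1^\top)$, $S^{\rm CF}_{i_*}=L_{n_t/m}\otimes(\bm{1}\bm{e}_m^\top)$, and $P_{i_*}=I_{n_t/m}\otimes\bm{e}_1$. Here $A_{1,i_*}$ is unit lower-triangular, hence invertible, so ${\cal E}_{i_*}$ is well-defined despite the unit eigenvalue.

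The central step is to verify that $A_{1,i_*}$ is the exact C-point Schur complement of $A_{0,i_*}$, and that after each F-relaxation the F-point residuals vanish, so that restriction by injection $P_{i_*}^\top$ feeds the exact C-point residual of the reduced system into the coarse solve. Working in the tensor splitting $\mathbb{C}^{n_t}=\mathbb{C}^{n_t/m}\otimes\mathbb{C}^m$, one has $L_{n_t}=I_{n_t/m}\otimes L_m+L_{n_t/m}\otimes(\bm{e}_1\bm{e}_m^\top)$, whence $A_{0,i_*}=I_{n_t/m}\otimes(I_m-L_m)-L_{n_t/m}\otimes(\bm{e}_1\bm{e}_m^\top)$; eliminating the F-points (the last $m-1$ slots of each length-$m$ block) returns precisely $I_{n_t/m}-L_{n_t/m}=A_{1,i_*}$. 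Granting this, reading the error right-to-left through \eqref{eq:Ei_def}: the leading $S^{\rm F}_{i_*}$ and the closing F-relaxation of each of the $\nu$ CF-sweeps zero the F-point residuals, so ${\cal K}_{i_*}=I-P_{i_*}A_{1,i_*}^{-1}P_{i_*}^\top A_{0,i_*}$ drives the C-point error exactly to zero, and the trailing $S^{\rm F}_{i_*}$ recomputes the F-points exactly from the corrected C-points, collapsing the product to zero for every $\nu\in\mathbb{N}_0$.

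The main obstacle lies only in the direct route: carrying the factor $\big(S^{\rm CF}_{i_*}\big)^{\nu}$ through the product and confirming the cancellation for arbitrary $\nu$, together with a careful verification of the Schur-complement identity in tensor-product coordinates. I would avoid this bookkeeping by leaning on the scalar-reduction argument of the first paragraph: once ${\cal E}_{i_*}$ is identified as a genuine scalar MGRIT propagator equipped with an ideal coarse operator, the established single-iteration exactness applies verbatim and yields ${\cal E}_{i_*}=0$ with no further computation.
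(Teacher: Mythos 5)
Your proposal is correct and follows essentially the same route as the paper's own proof: identify ${\cal E}_{i_*}$ with the error propagator of a scalar MGRIT problem whose coarse-grid coefficient $\mu_{i_*} = 1 = \lambda_{i_*}^m$ is the ideal one, and invoke the single-iteration exactness of MGRIT with an ideal coarse-grid operator. The extra direct-substitution and Schur-complement bookkeeping you sketch (and then set aside) is unnecessary, and your observation that this purely algebraic exactness survives the violation of \cref{ass:stability} is precisely what makes the paper's short argument legitimate.
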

\begin{proof}
Observe that the error propagator ${\cal E}_{i_*}$ in \eqref{eq:Ei_def} corresponds to a scalar initial-value problem with fine-grid time-stepping operator $\lambda_{i_*}$, and coarse-grid time-stepping operator $\mu_{i_*}$ (see, e.g., the expressions for $A_{0,i}$ and $A_{1,i}$ in \eqref{eq:A0i_def} and \eqref{eq:A1i_def}).
Because $\mu_{i_*} = \big( \lambda_{i_*} \big)^m$ this scalar problem for eigenmode $i_*$ uses an ideal coarse-grid operator.
The result \eqref{sm:eq:Ei_const_mode_zero} follows immediately by recalling that MGRIT converges to the exact solution in a single iteration when using the ideal coarse-grid operator.
\end{proof}
%

\section{Interpolation matrices and error propagators of relaxation}
\label{sm:sec:interp_and_relax}

The purpose of this section is to derive convenient representations for the interpolation and relaxation components of the MGRIT error propagator ${\cal E}$ in \eqref{eq:E_mgrit}.

To begin, it is useful to define a CF-interval as a C-point and the $m-1$ F-points that follow it. We denote the $k$th such CF-block of a space-time vector ${\bm{u} = \big( \bm{u}_0, \bm{u}_1, \ldots, \bm{u}_{n_t-1} \big)^\top \in \mathbb{R}^{n_t n_x }}$ by
\begin{align} \label{sm:eq:block_notation}
\wh{\bm{u}}_k = \big( \bm{u}_{km}, \bm{u}_{km+1}, \ldots \bm{u}_{km+m-1} \big)^\top \in \mathbb{R}^{m n_x},
\quad 
k \in \{0, \ldots, n_t/m-1\}.
\end{align}
Furthermore, the $j$th vector in the $k$th CF-block is denoted as $\wh{\bm{u}}_{k,j} = \bm{u}_{km + j}$,\\ $j \in \{0, \ldots, m-1\}$.

We now consider the interpolation operator.
Recall that the interpolation operator $P$ in \eqref{eq:E_mgrit} is based on injection.
That is, $P$ maps the $k$th C-point variable $\bm{u}_{km} \in \mathbb{R}^{n_x}$ as 
\begin{align}
\bm{u}_{km} \mapsto
\begin{bmatrix}
\bm{u}_{km} \\
\bm{0} \\
\vdots \\
\bm{0}
\end{bmatrix}
=
\big( \bm{e}_1 \otimes I_{n_x} \big) \bm{u}_{km}
\in 
\mathbb{R}^{mn_x},
\end{align}
in which $\bm{e}_1 \in \mathbb{R}^m$ is the canonical (column-oriented) basis vector in the first direction, and is unrelated to the algebraic error $\bm{e}^{(p)}$.
Therefore, the global space-time interpolation operator that acts on all $n_t/m$ C-point variables simultaneously is the block diagonal matrix
\begin{align} \label{sm:eq:interp_def}
P = I_{n_t/m} \otimes \big( \bm{e}_1 \otimes I_{n_x} \big).
\end{align}
Note that the transpose of injection, which acts as the restriction operator in \eqref{eq:E_mgrit}, is simply 
\begin{align}
P^\top
=  
I_{n_t/m} \otimes \big( \bm{e}_1^\top \otimes I_{n_x} \big).
\end{align}

Next we consider the more complicated cases of the iteration operators for F- and C-relaxation in \eqref{eq:E_mgrit}. 
To this end, suppose we have some approximation to the true solution of the system $A_0 \bm{u} = \bm{b}$ denoted by $\bm{w}^{(0)} \approx \bm{u}$.
Then, as described in \cref{sec:MGRIT}, F-relaxation generates a new approximation $\bm{w}^{(0)} \mapsto \bm{w}^{(1)}$ such that C-point values of $\bm{w}^{(1)}$ are unchanged from those of $\bm{w}^{(0)}$, and F-point values of $\bm{w}^{(1)}$ have zero residual.
In other words, F-relaxation represents an exact solve for the F-point variables of the system $A_0 \bm{w}^{(1)} = \bm{b}$, where C-point variables in $\bm{w}^{(1)}$ are equal to those in $\bm{w}^{(0)}$.
Therefore, on the $k$th CF-block, $k \in \{0, 1, \ldots, n_t/m-1\}$, F-relaxation can be expressed as the update
\begin{equation}
\begin{aligned} \label{sm:eq:F-relax_update}
\wh{\bm{w}}_{k,0}^{(1)} 
&= 
\wh{\bm{w}}_{k,0}^{(0)}, 
\\
\wh{\bm{w}}_{k,j}^{(1)} 
&=
\Phi^j \wh{\bm{w}}_{k,0}^{(0)} + \wh{\bm{b}}_{k,j}, 
\quad j \in \{1, \ldots, m-1\}.
\end{aligned}
\end{equation}
Replacing the approximations $\bm{w}^{(q)}$ via the error equations $\bm{w}^{(q)} = \bm{u} - \bm{e}^{(q)}$ leads to
\begin{equation} \label{sm:eq:F-relax_derive_temp}
\begin{aligned} 
\wh{\bm{u}}_{k,0} - \wh{\bm{e}}_{k,0}^{(1)} 
&= 
\wh{\bm{u}}_{k,0} - \wh{\bm{e}}_{k,0}^{(0)}, 
\\
\wh{\bm{u}}_{k,j} - \wh{\bm{e}}_{k,j}^{(1)} 
&=
\Phi^j \left( \wh{\bm{u}}_{k,0} - \wh{\bm{e}}_{k,0}^{(0)} \right) + \wh{\bm{b}}_{k,j}
=
\left( \Phi^j \wh{\bm{u}}_{k,0}+ \wh{\bm{b}}_{k,j} \right) -  \Phi^j  \wh{\bm{e}}_{k,0}^{(0)},
\end{aligned}
\end{equation}
for $\quad j \in \{1, \ldots, m-1\}$, with the last equality following from the linearity of $\Phi$. Note that the exact solution $\bm{u}$ is a fixed-point of the update \eqref{sm:eq:F-relax_update}, $\wh{\bm{u}}_{k,j} = \Phi^j \wh{\bm{u}}_{k,0}+ \wh{\bm{b}}_{k,j},\, j \in \{1, \ldots, m-1\}$. Therefore, update \eqref{sm:eq:F-relax_derive_temp} can be recast in terms of the error as
\begin{equation}
\begin{aligned}
\label{sm:eq:error_F-relax_local_intermediate}
\wh{\bm{e}}_{k,0}^{(1)} 
&= 
\wh{\bm{e}}_{k,0}^{(0)}, 
\\
\wh{\bm{e}}_{k,j}^{(1)} 
&=
\Phi^j \wh{\bm{e}}_{k,0}^{(0)}, 
\quad j \in \{1, \ldots, m-1\}.
\end{aligned}
\end{equation}
%
%
Expressed in CF-block form, update \eqref{sm:eq:error_F-relax_local_intermediate} is
\begin{align} 
\label{sm:eq:error_F-relax_local}
\wh{\bm{e}}_k^{(1)} 
= 
\begin{bmatrix}
I & 0 &\cdots &0 \\
\Phi & 0 &\cdots &0 \\
\vdots & \vdots &\cdots &\vdots \\
\Phi^{m-1} & 0 &\cdots &0
\end{bmatrix}
\wh{\bm{e}}_k^{(0)}
=
\left[ \bm{e}_1^\top \otimes v(\Phi) \right] \wh{\bm{e}}_k^{(0)},
\end{align}
with the function $v$ defined in \eqref{eq:Ln_v_defs}.
Thus, based on \eqref{sm:eq:error_F-relax_local}, the error propagator for F-relaxation that acts on all CF-blocks $k = 0,\ldots,n_t/m-1$ simultaneously is the block diagonal matrix
\begin{align}
\label{sm:eq:SF}
S^{\rm F} 
&= 
I_{n_t/m} \otimes \left[ \bm{e}_1^\top \otimes v(\Phi) \right].
\end{align}

Now consider C-relaxation. Recall from \cref{sec:MGRIT} that C-relaxation leaves F-point values unchanged and updates C-points such that they have zero residuals. 
In other words, C-relaxation represents an exact solve for the C-point variables of the system $A_0 \bm{w}^{(1)} = \bm{b}$, where F-point variables in $\bm{w}^{(1)}$ are equal to those in $\bm{w}^{(0)}$.
Therefore, the C-point update on variables in the $k$th CF-interval, $k \in \{{1, \ldots, n_t/m-1\}}$, may be written
\begin{equation}
\begin{aligned}
\label{sm:eq:error_C-relax_local_intermediate}
\wh{\bm{w}}_{k,0}^{(1)} 
&= 
\Phi \wh{\bm{w}}_{k-1,m-1}^{(0)} + \wh{\bm{b}}_{k,0}, 
\\
\wh{\bm{w}}_{k,j}^{(1)} 
&=
\wh{\bm{w}}_{k,j}^{(0)}, 
\quad j \in \{1, \ldots, m-1\}.
\end{aligned}
\end{equation}
Using the same logic as for F-relaxation above, \eqref{sm:eq:error_C-relax_local_intermediate} can be rewritten as the following update on the error,
\begin{align}
\label{sm:eq:error_C-relax_local}
\wh{\bm{e}}_k^{(1)} 
= 
\begin{bmatrix}
0 & 0 &\cdots & \Phi \\
0 & 0 &\cdots &0 \\
\vdots & \vdots &\cdots & \vdots \\
0 & 0 &\cdots &0
\end{bmatrix}
\wh{\bm{e}}_{k-1}^{(0)}
+
\begin{bmatrix}
0 \\
& I_n \\
& & \ddots \\
& & & I_n
\end{bmatrix}
\wh{\bm{e}}_{k}^{(0)}.
\end{align}

Recall that in our formulation of ${\cal E}$ given by \eqref{eq:E_mgrit}, a C-relaxation  is always followed by an F-relaxation to create a CF-relaxation.
Combining \eqref{sm:eq:error_F-relax_local} and \eqref{sm:eq:error_C-relax_local}, it is easy  to show that the error update for a CF-relaxation is simply
\begin{align} 
\label{sm:eq:error_CF-relax_local}
\wh{\bm{e}}_k^{(1)} 
=
\begin{bmatrix}
0  &\cdots & 0 & \Phi \\
0  &\cdots & 0 & \Phi^2 \\
\vdots &\cdots &\vdots  &\vdots \\
0  &\cdots &0 & \Phi^{m} 
\end{bmatrix} 
\wh{\bm{e}}_{k-1}^{(0)}
=
\left[ \bm{e}_m^\top \otimes v(\Phi) \Phi \right] \wh{\bm{e}}_{k-1}^{(0)},
\end{align}
Therefore, the error propagator for CF-relaxation that acts on all CF-blocks \\${k = 0, \ldots, n_t/m-1}$ simultaneously is the block lower bidiagonal matrix
\begin{align}
\label{sm:eq:SCF}
S^{\rm CF} 
&= 
L_{n_t/m} \otimes \left[ \bm{e}_m^\top \otimes v(\Phi) \Phi \right].
\end{align}

\section{Derivations of Fourier symbols}
\label{app:eigenmatrix-proofs}

\subsection{Proof of \Cref{lem:eigmat-F-relax} (Fourier symbol of F-relaxation)}
\label{sec:proof:eigmat-F-relax}

\begin{proof}
From the representation of $\wh{S}^{\rm F}_i (\theta)$ given by \eqref{eq:SFi_eigen_def}, its $(p,q)$th element is
\begin{align} \label{eq:SFi_eigen_pq_def}
\Big[ \wh{S}^{\rm F}_i (\theta) \Big]_{p,q}
=
\Big\langle
\bm{\varphi}_0\big( \theta + \tfrac{2 \pi p}{m} \big), S^{\rm F}_i \bm{\varphi}_0\big( \theta + \tfrac{2 \pi q}{m} \big)
\Big\rangle,
\quad
p,q \in \{0, \ldots, m-1\}.
\end{align}
We begin by considering the vector in the right-hand side of this inner product. 
Using the definition of $S^{\rm F}_i$ given in \eqref{eq:SFi_def}, the $j$th element of the vector $S^{\rm F}_i \bm{\varphi}_0\big( \theta + \tfrac{2 \pi q}{m} \big)$ is
\begin{align}
\Big[S^{\rm F}_i \bm{\varphi}_0\big( \theta + \tfrac{2 \pi q}{m} \big) \Big]_j
&=
\lambda_i^{j \bmod m}
\exp
\Big[ \tfrac{\im}{\delta t} \big( \theta + \tfrac{2 \pi q}{m} \big) \big( j -j \bmod m \big) \delta t \Big],
\\
\label{eq:zeta_i_def}
&=
{\underbrace{\Big[ \lambda_i \exp \Big( -\im \big( \theta + \tfrac{2 \pi q}{m} \big) \Big) \Big]}_{=: \zeta_i(\theta,q)}}^{j \bmod m}
\Big[ \bm{\varphi}_0\big( \theta + \tfrac{2 \pi q}{m} \big) \Big]_j.
\end{align}
Now, considering the inner product \eqref{eq:SFi_eigen_pq_def} and the definition of $\zeta_i(\theta,q)$ in~\eqref{eq:zeta_i_def}, some algebra gives
\begingroup
\allowdisplaybreaks
\begin{align}
\Big[ \wh{S}^{\rm F}_i (\theta) \Big]_{p,q}
&=
\lim \limits_{n_t \to \infty}
\frac{1}{n_t}
\sum
\limits_{k = 0}^{n_t-1}
\big[\zeta_i(\theta, q) \big]^{k \bmod m}
\exp
\Big[\tfrac{-\im}{\delta t} \big( \theta + \tfrac{2 \pi p}{m} \big) k \delta t \Big]
\exp
\Big[ \tfrac{\im }{\delta t} \big( \theta + \tfrac{2 \pi q}{m} \big) k \delta t \Big],
\\
&=
\lim \limits_{n_t \to \infty}
\frac{1}{n_t}
\sum
\limits_{k = 0}^{n_t-1}
\big[\zeta_i(\theta, q) \big]^{k \bmod m}
\exp
\Big( \tfrac{2 \pi \im k}{m} (q-p) \Big), 
\\
&=
\lim \limits_{n_t \to \infty}
\frac{1}{n_t}
\sum \limits_{r = 0}^{m-1}
\Bigg(
\big[\zeta_i(\theta, q) \big]^r
\Bigg[
\sum
\limits_{k = 0}^{\frac{n_t}{m}-1}
\exp
\Big( \tfrac{2 \pi \im (km+r)}{m} (q-p) \Big)
\Bigg]
\Bigg), 
\\
&=
\lim \limits_{n_t \to \infty}
\frac{1}{n_t}
\sum \limits_{r = 0}^{m-1}
\Bigg(
\big[\zeta_i(\theta, q) \big]^r
\Bigg[
\exp \Big( \tfrac{2 \pi \im r}{m} (q-p) \Big)
\sum
\limits_{k = 0}^{\frac{n_t}{m}-1}
\exp
\Big( 2 \pi \im k (q-p) \Big)
\Bigg]
\Bigg), 
\\
&=
\lim \limits_{n_t \to \infty}
\frac{1}{n_t}
\sum \limits_{r = 0}^{m-1}
\bigg(
\big[\zeta_i(\theta, q) \big]^r
\bigg[
\frac{n_t}{m}
\exp
\Big( \tfrac{2 \pi \im r}{m} (q-p) \Big)
\bigg]
\bigg),
\\
\label{eq:SF_eigen_calc_int}
&=
\frac{1}{m}
\sum \limits_{r = 0}^{m-1}
\Big[\zeta_i(\theta, q)
\exp
\Big( \tfrac{2 \pi \im}{m} (q-p) \Big)
\Big]^r,
\end{align}
\endgroup
Substituting $\zeta_i(\theta, q)$ from \eqref{eq:zeta_i_def} into \eqref{eq:SF_eigen_calc_int}  and simplifying the geometric sum gives
\begin{align}
\label{eq:SFi_eigen_calc_end}
\Big[ \wh{S}^{\rm F}_i (\theta) \Big]_{p,q}
&=
\frac{1}{m}
\sum \limits_{r = 0}^{m-1}
\Big[\lambda_i
\exp
\Big( -\im \Big( \theta + \tfrac{2 \pi p}{m} \Big) \Big)
\Big]^r
=
\frac{1}{m}
\frac{1 - \big( \lambda_i e^{-\im \theta} \big)^m}{1 - \exp
\big( -\im \big( \theta + \tfrac{2 \pi p}{m} \big) \big)}.
\end{align}
Observe from \eqref{eq:SFi_eigen_calc_end} that $\big[ \wh{S}^{\rm F}_i (\theta) \big]_{p,q}$ does not depend on the column index $q$, but only the row index $p$.
This means that $\wh{S}^{\rm F}_i (\theta)$ can be expressed as an outer product of the form $\wh{S}^{\rm F}_i (\theta) = \tfrac{1}{m} \big[ 1 - \big( \lambda_i e^{-\im \theta} \big)^m \big] \bm{a} \bm{1}^\top$, for some vector $\bm{a}$ whose $p$th element is $1/\big[1 - \exp
\big( -\im \big( \theta + \tfrac{2 \pi p}{m} \big) \big)\big]= 1/\wt{A}_{0,i} \big(\theta + \tfrac{2 \pi p}{m} \big)$, where $\wt{A}_{0,i}(\theta)$ is the Fourier symbol of $A_{0,i}$ given in \eqref{eq:Ai_symbols}.
Therefore, $\bm{a} = \big[ \wh{A}_{0,i}(\theta) \big]^{-1} \bm{1}$, in which $\wh{A}_{0,i}(\theta)$ is the diagonal matrix in \eqref{eq:A0and1i_eigen} holding the Fourier symbols of the harmonics.
This gives the result \eqref{eq:SFi_eigen} for $\wh{S}^{\rm F}_i (\theta)$.
\end{proof}

\subsection{Proof of \cref{cor:SF_idempotence} (Idempotence of F-relaxation)}
\label{sec:proof:idempotence-of-F-relax}

\begin{proof}
Consider first the eigenvalue claims. Since $\wh{S}^{\rm F}_i (\theta) \in \mathbb{C}^{m \times m}$ has a rank of one (see \eqref{eq:SFi_eigen}), it has $m-1$ zero eigenvalues, and if it is idempotent then its one remaining eigenvalue must be one.

Using the expression for $\wh{S}^{\rm F}_i (\theta)$ given by \eqref{eq:SFi_eigen}, its square is
\begin{align}
\wh{S}^{\rm F}_i (\theta) \, \wh{S}^{\rm F}_i (\theta)
&=
\Big(
c(\theta)
\big[ \wh{A}_{0,i}(\theta) \big]^{-1}
\bm{1}
\bm{1}^\top
\Big)
\Big(
c(\theta)
\big[ \wh{A}_{0,i}(\theta) \big]^{-1}
\bm{1}
\bm{1}^\top
\Big), 
\\
&=
c(\theta)
\big[ \wh{A}_{0,i}(\theta) \big]^{-1}
\bm{1}
\Big(
\bm{1}^\top
c(\theta)
\big[ \wh{A}_{0,i}(\theta) \big]^{-1}
\bm{1}
\Big)
\bm{1}^\top, 
\\
\label{eq:SFi_idempotence_proof_temp}
&=
\Big(
\bm{1}^\top
c(\theta)
\big[ \wh{A}_{0,i}(\theta) \big]^{-1}
\bm{1}
\Big)
\wh{S}^{\rm F}_i (\theta).
\end{align}
We now show that the inner product in \eqref{eq:SFi_idempotence_proof_temp} is one, and, thus, that $\wh{S}^{\rm F}_i (\theta)$ is idempotent.
From Equation~\eqref{eq:SFi_eigen_calc_end} in the proof of~\cref{lem:eigmat-F-relax}, the column vector $c(\theta) \big[ \wh{A}_{0,i}(\theta) \big]^{-1} \bm{1}$ can be written as
\begin{align}
\label{eq:geo_sum_identity}
\Big[ c(\theta) \big[ \wh{A}_{0,i}(\theta) \big]^{-1}
\bm{1} \Big]_p
=
\frac{1}{m}
\sum
\limits_{r = 0}^{m-1}
\Big[
\lambda_i 
\exp\big( -\im \big( \theta + \tfrac{2\pi p}{m} \big) \big)
\Big]^r.
\end{align}
Thus, with \eqref{eq:geo_sum_identity}, the inner product in \eqref{eq:SFi_idempotence_proof_temp} can be written as
\begin{align}
\bm{1}^\top
c(\theta)
\big[ \wh{A}_{0,i}(\theta) \big]^{-1}
\bm{1}
&=
\frac{1}{m}
\sum 
\limits_{p = 0}^{m-1}
\bigg(
\sum 
\limits_{r = 0}^{m-1}
\Big[
\lambda_i 
\exp\big( -\im \big( \theta + \tfrac{2\pi p}{m} \big) \big)
\Big]^r
\bigg),
\\
&=
\frac{1}{m}
\sum 
\limits_{r = 0}^{m-1}
\Big[
\lambda_i
e^{-\im \theta} 
\Big]^r
\sum 
\limits_{p = 0}^{m-1}
\Big[ 
\exp\big( \tfrac{-2\pi \im r}{m} \big) 
\Big]^{p}, 
\\
\label{eq:kron_delta_use}
&=
\frac{1}{m}
\sum 
\limits_{r = 0}^{m-1}
\Big[
\lambda_i
e^{-\im \theta} 
\Big]^r
m \delta_{r,0},
\\
\label{eq:SCFi_latter_function}
&=1.
\end{align}
In \eqref{eq:kron_delta_use}, $\delta_{r,0}$ denotes the Kronecker delta function, and it has arisen from simplifying the geometric sum over $p$ in the previous equation.
\end{proof}
%

\subsection{Proof of \cref{lem:eigmat-prerelax} (Fourier symbol of pre-relaxation)}
\label{sec:proof:eigmat-prerelax}

\begin{proof}
We begin by computing the Fourier symbol for CF-relaxation. From \eqref{eq:SCFi_eigen_def}, the $(p,q)$th element of $\wh{S}^{\rm CF}_i (\theta)$ is equal to
\begin{align} \label{eq:SCFi_eigen_pq_def}
\Big[ \wh{S}^{\rm CF}_i (\theta) \Big]_{p,q}
=
\Big\langle
\bm{\varphi}_0\big( \theta + \tfrac{2 \pi p}{m} \big), S^{\rm CF}_i \bm{\varphi}_0\big( \theta + \tfrac{2 \pi q}{m} \big)
\Big\rangle,
\quad
p,q \in \{0, \ldots, m-1\}.
\end{align}
Using the expression for $S^{\rm CF}_i$ given in \eqref{eq:SCFi_def}, the $j$th element of the vector $S^{\rm CF}_i \bm{\varphi}_0\big( \theta + \tfrac{2 \pi q}{m} \big)$ can be written as
\begin{align}
\Big[S^{\rm CF}_i \bm{\varphi}_0\big( \theta + \tfrac{2 \pi q}{m} \big) \Big]_j
&=
\lambda_i \lambda_i^{j \bmod m}
\exp
\Big[ \tfrac{\im }{\delta t} \big( \theta + \tfrac{2 \pi q}{m} \big) \big( j - 1 - j \bmod m \big) \delta t \Big],
\\
&=
\zeta_i(\theta, q)
\big[\zeta_i(\theta, q) \big]^{j \bmod m}
\Big[ \bm{\varphi}_0\big( \theta + \tfrac{2 \pi q}{m} \big) \Big]_j,
\\
\label{eq:SCFi-SFi-pq_temp}
&=
\zeta_i(\theta, q)
\Big[S^{\rm F}_i \bm{\varphi}_0\big( \theta + \tfrac{2 \pi q}{m} \big) \Big]_j,
\end{align}
where the function $\zeta_i(\theta,q)$ is as in \eqref{eq:zeta_i_def}, and $\big[\zeta_i(\theta, q) \big]^{j \bmod m} \big[ \bm{\varphi}_0\big( \theta + \tfrac{2 \pi q}{m} \big) \big]_j$ has been replaced with $\Big[S^{\rm F}_i \bm{\varphi}_0\big( \theta + \tfrac{2 \pi q}{m} \big) \Big]_j$ by using \eqref{eq:zeta_i_def}.

Therefore, from \eqref{eq:SCFi_eigen_pq_def} and \eqref{eq:SCFi-SFi-pq_temp}, the following simple relationship holds between the $(p,q)$th element of the Fourier symbols of CF- and F-relaxation:
\begin{align} \label{eq:SCFi-SFi-pq}
\Big[ \wh{S}^{\rm CF}_i (\theta) \Big]_{p,q}
=
\zeta_i(\theta, q)
\Big[ \wh{S}^{\rm F}_i (\theta) \Big]_{p,q}.
\end{align}
From its definition (see \eqref{eq:zeta_i_def}), $\zeta_i(\theta, q) = 1 - \wt{A}_{0,i}\big( \theta + \tfrac{2 \pi q}{m} \big)$, where $\wt{A}_{0,i}(\theta)$ defined in \eqref{eq:Ai_symbols} is the Fourier symbol of ${A}_{0,i}$.
Thus, from \eqref{eq:SCFi-SFi-pq} and \eqref{eq:SFi_eigen}, the Fourier symbol of CF-relaxation may be expressed as
\begin{align} \label{eq:SCFi_eigen_repeated}
\wh{S}^{\rm CF}_i (\theta) 
=
\wh{S}^{\rm F}_i (\theta) 
\Big[
I - \wh{A}_{0,i}(\theta)
\Big]
=
c(\theta)
\big[ \wh{A}_{0,i}(\theta) \big]^{-1}
\bm{1}
\bm{1}^\top
\Big[
I - \wh{A}_{0,i}(\theta)
\Big],
\end{align}
in which $\wh{A}_{0,i}(\theta)$ is the diagonal matrix holding the Fourier symbols of the harmonics (see \eqref{eq:A0and1i_eigen}).
This proves \eqref{eq:SCFi_eigen}, the first claim of the lemma.

Now we consider taking powers of the Fourier symbol. Exploiting the rank-1 structure of $\wh{S}^{\rm CF}_i (\theta)$ in \eqref{eq:SCFi_eigen_repeated}, any power $\nu \in \mathbb{N}$ of the matrix can be computed as
\begingroup
\allowdisplaybreaks
\begin{align}
\big[ \wh{S}^{\rm CF}_i (\theta) \big]^{\nu}
&=
\bigg(
c(\theta)
\big[ \wh{A}_{0,i}(\theta) \big]^{-1}
\bm{1}
\bm{1}^\top
\Big[
I - \wh{A}_{0,i}(\theta)
\Big]
\bigg)^{\nu}, 
\\
&=
c(\theta)
\big[ \wh{A}_{0,i}(\theta) \big]^{-1}
\bm{1}
\bigg(
\bm{1}^\top
\Big[
I - \wh{A}_{0,i}(\theta)
\Big]
c(\theta)
\big[ \wh{A}_{0,i}(\theta) \big]^{-1}
\bm{1}
\bigg)^{\nu-1}
\bm{1}^\top
\Big[
I - \wh{A}_{0,i}(\theta)
\Big],
\\
\label{eq:SCFi_nu_powers_formula}
&=
\Big(
\bm{1}^\top
c(\theta)
\big[ \wh{A}_{0,i}(\theta) \big]^{-1}
\bm{1}
-
c(\theta)
\bm{1}^\top
\bm{1}
\Big)^{\nu-1}
\wh{S}^{\rm CF}_i (\theta),
\quad
\nu \in \mathbb{N}.
\end{align}
\endgroup
The remaining problem is thus one of evaluating the two inner products in \eqref{eq:SCFi_nu_powers_formula}: 
$
\bm{1}^\top
c(\theta)
\big[ \wh{A}_{0,i}(\theta) \big]^{-1}
\bm{1}$,
and
$
c(\theta)
\bm{1}^\top
\bm{1}$.
The first inner product was already shown to be one in the proof of \cref{cor:SF_idempotence} (specifically, see \eqref{eq:SCFi_latter_function}), and the second is simply $c(\theta)
\bm{1}^\top
\bm{1} = 
m c(\theta)$.
Thus, the function raised to the power $\nu-1$ in \eqref{eq:SCFi_nu_powers_formula} is simply
\begin{align} \label{eq:powers_function}
\bm{1}^\top
c(\theta)
\big[ \wh{A}_{0,i}(\theta) \big]^{-1}
\bm{1}
-
c(\theta)
\bm{1}^\top
\bm{1}
=
1 - m c(\theta) =\big( \lambda_i e^{-\im \theta} \big)^m.
\end{align}
Substituting this into \eqref{eq:SCFi_nu_powers_formula} leads immediately to
\begin{align} \label{eq:SCFi_powers}
\big[ \wh{S}^{\rm CF}_i (\theta) \big]^{\nu}
=
\big(\lambda_i e^{-\im \theta} \big)^{(\nu-1)m} \wh{S}^{\rm CF}_i (\theta),
\quad 
\nu \in \mathbb{N}.
\end{align}

Finally, to complete the proof, consider the product of the CF- and F-relaxation Fourier symbols.
Using the same rank-1 exploit as above and using \eqref{eq:powers_function} gives
\begin{align}
\wh{S}^{\rm CF}_i (\theta) \wh{S}^{\rm F}_i (\theta)
&=
\Big(
c(\theta)
\big[ \wh{A}_{0,i}(\theta) \big]^{-1}
\bm{1}
\bm{1}^\top
\Big[
I - \wh{A}_{0,i}(\theta)
\Big]
\Big)
\Big(
c(\theta)
\big[ \wh{A}_{0,i}(\theta) \big]^{-1}
\bm{1}
\bm{1}^\top \Big),
\\
&=
c(\theta)
\big[ \wh{A}_{0,i}(\theta) \big]^{-1}
\bm{1}
\Big(
\bm{1}^\top
\Big[
I - \wh{A}_{0,i}(\theta)
\Big]
c(\theta)
\big[ \wh{A}_{0,i}(\theta) \big]^{-1}
\bm{1}
\Big)
\bm{1}^\top,
\\
&=
\big(\lambda_i e^{-\im \theta} \big)^{m} \, \wh{S}^{\rm F}_i (\theta).
\end{align}
Combining this result with \eqref{eq:SCFi_powers} leads immediately to the claimed result of \eqref{eq:pre-relax_eigen} that $\big[ \wh{S}^{\rm CF}_i (\theta) \big]^{\nu} \wh{S}^{\rm F}_i (\theta) = \big( \lambda_i e^{- \im \theta} \big)^{m \nu} \, \wh{S}^{\rm F}_i (\theta)$ for $\nu \in \mathbb{N}_0$.
\end{proof}
%

\subsection{Proof of \cref{thm:eigmat-error-prop} (Error propagator Fourier symbol)}
\label{sec:proof:eigmat-error-prop}

\begin{proof}
We begin by forming $\wh{{\cal K}}_i(\theta)$ from \eqref{eq:Ki_eigen_components}, which is the coarse-grid correction component of $\wh{{\cal E}}_i(\theta)$. 
Using the interpolation Fourier symbol \eqref{eq:Pi_eigen}, and exploiting that the Fourier symbol of the coarse-grid operator is a scalar (see \eqref{eq:A0and1i_eigen}), we have
\begin{align}
\wh{{\cal K}}_i(\theta)
=
I_m
-
\frac{1}{m}
\,
\big[\wh{A}_{1,i} (m \theta) \big]^{-1}
\,
\bm{1} \bm{1}^\top
\,
\wh{A}_{0,i} (\theta).
\end{align}
Substituting this into the expression for $\wh{{\cal E}}_i(\theta)$ given in \eqref{eq:Ei_eigen_components}, and using the result from \eqref{eq:pre-relax_eigen} that $\big[ \wh{S}^{\rm CF}_i (\theta) \big]^{\nu} \wh{S}^{\rm F}_i (\theta) = \big( \lambda_i e^{-\im \theta} \big)^{m \nu} \wh{S}^{\rm F}_i (\theta)$ gives 
\begin{align}
\wh{{\cal E}}_i(\theta)
&=
\wh{S}^{\rm F}_i (\theta)
\Big(
I_m
-
\frac{1}{m}
\,
\big[\wh{A}_{1,i} (m \theta) \big]^{-1}
\,
\bm{1} \bm{1}^\top
\,
\wh{A}_{0,i} (\theta)
\Big)
\big[ \wh{S}^{\rm CF}_i (\theta) \big]^{\nu} \wh{S}^{\rm F}_i (\theta),
\\
\label{eq:Ei_eigen_intermediate}
&=
\big( \lambda_i e^{-\im \theta} \big)^{m \nu}
\Big(
\Big[ \wh{S}^{\rm F}_i (\theta) \Big]^2
-
\frac{1}{m}
\,
\big[\wh{A}_{1,i} (m \theta) \big]^{-1}
\,
\Big[ 
\wh{S}^{\rm F}_i (\theta)
\bm{1} \bm{1}^\top
\,
\wh{A}_{0,i} (\theta)
\wh{S}^{\rm F}_i (\theta)
\Big]
\Big).
\end{align}

From \eqref{eq:SFi_eigen}, recall that $\wh{S}^{\rm F}_i (\theta) = c(\theta) 
\big[\wh{A}_{0,i} (\theta) \big]^{-1}
\,
\bm{1} \bm{1}^{\top}$, with $c$ a scalar. 
Using this, and that $\bm{1}^\top \bm{1} = m$, the last term in closed parentheses in \eqref{eq:Ei_eigen_intermediate} can be rewritten as
\begin{align}
\wh{S}^{\rm F}_i (\theta)
\Big(
\bm{1} \bm{1}^\top
\,
\wh{A}_{0,i} (\theta)
\wh{S}^{\rm F}_i (\theta)
\Big)
&=
\wh{S}^{\rm F}_i (\theta)
\Big(
\bm{1} \bm{1}^{\top}
\,
\wh{A}_{0,i} (\theta)
c(\theta) 
\big[ \wh{A}_{0,i} (\theta) \big]^{-1}
\,
\bm{1} \bm{1}^{\top}
\Big), 
\\
&=
\wh{S}^{\rm F}_i (\theta)
\Big(
c(\theta) \bm{1} \bm{1}^\top \bm{1} \bm{1}^\top
\Big),
\\
&=
c(\theta)
c(\theta)
\big[ \wh{A}_{0,i} (\theta) \big]^{-1}
\bm{1} \bm{1}^\top \bm{1} \bm{1}^\top \bm{1} \bm{1}^\top,
\\
&=
m^2 
c(\theta) 
\wh{S}^{\rm F}_i (\theta).
\end{align}
Substituting this result into \eqref{eq:Ei_eigen_intermediate} gives the error propagator Fourier symbol as
\begin{align}
\wh{{\cal E}}_i(\theta)
&=
\big( \lambda_i e^{-\im \theta} \big)^{m \nu}
\Big(
\Big[ \wh{S}^{\rm F}_i (\theta) \Big]^2
-
m c(\theta)
\,
\big[ \wh{A}_{1,i} (m \theta) \big]^{-1}
\,
\wh{S}^{\rm F}_i (\theta)
\Big),
\\
\label{eq:Ei_eigen_intermediate2}
&=
\big( \lambda_i e^{-\im \theta} \big)^{m \nu}
\Big(
1
-
m c(\theta)
\,
\big[ \wh{A}_{1,i} (m \theta) \big]^{-1}
\Big)
\wh{S}^{\rm F}_i (\theta),
\end{align}
where the second equality follows by the idempotence of $\wh{S}^{\rm F}_i (\theta)$ (see \Cref{cor:SF_idempotence}), and then pulling out the common factor of $\wh{S}^{\rm F}_i (\theta)$.

Finally, using the definition of $c(\theta)$ given in \eqref{eq:c_def}, and the expression for $\wh{A}_{1,i} (m \theta)$ given by \eqref{eq:A0and1i_eigen}, we get
\begin{align}
1
-
m c(\theta)
\,
\big[ \wh{A}_{1,i} (m \theta) \big]^{-1}
=
\frac{\wh{A}_{1,i} (m \theta) - m c(\theta)}{\wh{A}_{1,i} (m \theta)}
=
\frac{\lambda_i^m e^{- \im m \theta} - \mu_i e^{- \im m \theta}}{1 - \mu_i e^{- \im m \theta}}
=
\frac{\lambda_i^m - \mu_i }{e^{\im m \theta} - \mu_i }.
\end{align}
Substituting this into \eqref{eq:Ei_eigen_intermediate2} yields the claimed form of $\wh{{\cal E}}_i(\theta)$ given by \eqref{eq:Ei_eigen}.
\end{proof}
%

\section{Eigenvalue estimates}
\label{app:eig-est}

\subsection{Semi-Lagrangian discretizations}
\label{app:eig-est-SL}

In this appendix, we present eigenvalue estimates for the time-stepping operators of semi-Lagrangian discretizations by invoking truncation error estimates from our recent work in Reference~\citenum{DeSterck_etal_2023_SL}.
To this end, consider the matrix ${\cal D}_{p+1}$ from De~Sterck~et~al.\cite{DeSterck_etal_2023_SL}

\begin{definition} \label{def:Dp}
Define the matrix ${\cal D}_{p+1} \in \mathbb{R}^{n_x \times n_x}$ such that $h^{-(p+1)}{\cal D}_{p+1}$ represents a finite-difference rule for approximating the $p+1$st derivative of periodic grid functions.
Let $\bm{v} = \big(v(x_1), \ldots, v(x_{n_x}) \big)^\top \in \mathbb{R}^{n_x}$ denote a vector of a periodic function $v(x)$ evaluated on the spatial mesh. 
Then, if the finite-difference rule is of order $s \in \mathbb{N}$ and $v$ is at least $p+1+s$ times continuously differentiable
\begin{align} \label{eq:Dp+1_def}
\left(
\frac{{\cal D}_{p+1}}{h^{p+1}} \bm{v} 
\right)_i = \left. \frac{\d^{p+1} v}{\d x^{p+1}} \right|_{x_i} + {\cal O}(h^s), 
\quad i \in \{1, \ldots, n_x \}.
\end{align}
Since the mesh points are equispaced, the matrix ${\cal D}_{p+1}$ is circulant.
\end{definition}

Now, consider the following truncation error estimate, which is a simplified version of Lemma 3.1 from De~Sterck~et~al.\cite{DeSterck_etal_2023_SL}
\begin{lemma}[Truncation error estimate of ${\cal S}_{p}^{(\delta t)}$]
\label{lem:trunc-est}
Define $\bm{u}(t) \in \mathbb{R}^{n_x}$ as the vector composed of the exact solution of the advection equation sampled in space at the mesh points $\bm{x}$ and at time $t$.
Suppose that $p$ is odd.
Then, the local truncation error of the semi-Lagrangian discretization ${\cal S}_{p}^{(\delta t)}$ can be expressed as
\begin{align} \label{eq:SL_trunc}
\bm{u}(t_{n+1}) - {\cal S}_{p}^{(\delta t)} \bm{u}(t_{n})
=
h^{p+1} f_{p+1} \big(\varepsilon^{(\delta t)} \big) 
\frac{{\cal D}_{p+1}}{h^{p+1}}
\bm{u}(t_{n+1})
+ {\cal O}(h^{p+2}),
\end{align}
in which $\varepsilon^{(\delta t)}$ is the mesh-normalized distance from a departure point to its east-neighboring mesh point.
The function $f_{p+1}$ in \eqref{eq:SL_trunc} is the following degree $p+1$ polynomial 
\begin{align} \label{eq:fpoly_def}
f_{p+1}(z) = \frac{1}{(p+1)!} \prod \limits_{q = - \frac{p+1}{2}}^{\frac{p-1}{2}} (q+z).
\end{align}
\end{lemma}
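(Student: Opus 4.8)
The plan is to derive the truncation error directly from the two facts that define a semi-Lagrangian step for constant-coefficient advection: the exactness of characteristic tracing, and the classical Lagrange interpolation remainder. For the advection equation the exact solution is transported rigidly along characteristics, so $u(x_i, t_{n+1}) = u(x_i^*, t_n)$, where $x_i^* := x_i - \alpha \delta t$ is the departure point associated with mesh point $x_i$. The scheme $\mathcal{S}_p^{(\delta t)}$ produces $[\mathcal{S}_p^{(\delta t)}\bm{u}(t_n)]_i$ as the value $P(x_i^*)$ of the degree-$p$ polynomial $P$ that interpolates the grid data $\bm{u}(t_n)$ on the semi-Lagrangian stencil. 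Hence the $i$th component of the local truncation error $\bm{u}(t_{n+1}) - \mathcal{S}_p^{(\delta t)}\bm{u}(t_n)$ is exactly the interpolation error $u(x_i^*, t_n) - P(x_i^*)$, and the entire lemma reduces to evaluating this error and rewriting it in the claimed discrete form.

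Second, I would apply the interpolation remainder. Writing $\alpha\delta t/h = \lfloor \alpha\delta t/h\rfloor + \varepsilon^{(\delta t)}$ as in \eqref{eq:epsilon_def}, the translational symmetry recorded earlier absorbs the integer part into an index shift, so that $x_i^*$ sits a fractional distance $\varepsilon^{(\delta t)} h$ to the west of a mesh point, which I call $x_e$, i.e.\ $x_i^* = x_e - \varepsilon^{(\delta t)} h$. For odd $p$ the scheme uses the symmetric stencil of $p+1$ nodes $\xi_q = x_e + qh$, $q \in \{-\tfrac{p+1}{2}, \ldots, \tfrac{p-1}{2}\}$, straddling the interval containing $x_i^*$. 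The remainder formula gives $u(x_i^*, t_n) - P(x_i^*) = \tfrac{1}{(p+1)!}\,\partial_x^{p+1} u(\zeta, t_n)\prod_q (x_i^* - \xi_q)$ for some $\zeta$ in the stencil hull. Since $x_i^* - \xi_q = -h(q + \varepsilon^{(\delta t)})$, the node product is $(-h)^{p+1}\prod_q (q + \varepsilon^{(\delta t)}) = h^{p+1}(p+1)!\, f_{p+1}(\varepsilon^{(\delta t)})$ with $f_{p+1}$ exactly the polynomial in \eqref{eq:fpoly_def}; crucially the sign is $+1$ because $p+1$ is even, which is precisely where oddness of $p$ enters. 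This produces the leading term $h^{p+1} f_{p+1}(\varepsilon^{(\delta t)})\, \partial_x^{p+1} u(\zeta, t_n)$.

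Third, I would convert this continuous leading term into the discrete form stated in \eqref{eq:SL_trunc} through three replacements, each perturbing the leading term only at $\mathcal{O}(h)$ and therefore absorbed into the $\mathcal{O}(h^{p+2})$ remainder after multiplication by the $\mathcal{O}(h^{p+1})$ prefactor: (i) $\zeta = x_i^* + \mathcal{O}(h)$ and smoothness give $\partial_x^{p+1} u(\zeta, t_n) = \partial_x^{p+1} u(x_i^*, t_n) + \mathcal{O}(h)$; (ii) because spatial derivatives are likewise transported along characteristics, $\partial_x^{p+1} u(x_i^*, t_n) = \partial_x^{p+1} u(x_i, t_{n+1})$ exactly; and (iii) \cref{def:Dp} (equation \eqref{eq:Dp+1_def}) gives $\partial_x^{p+1} u(x_i, t_{n+1}) = \big[\tfrac{\mathcal{D}_{p+1}}{h^{p+1}}\bm{u}(t_{n+1})\big]_i + \mathcal{O}(h^s)$ with $s \geq 1$. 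Collecting the pointwise identities into a single vector equation is legitimate since $\varepsilon^{(\delta t)}$, and hence $f_{p+1}(\varepsilon^{(\delta t)})$, is the same scalar at every node and the $\mathcal{O}(h^{p+2})$ remainder is uniform by boundedness of the high derivatives of $u$; this yields \eqref{eq:SL_trunc}.

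Finally, I expect the main obstacle to be bookkeeping rather than any deep difficulty. I must pin down the exact semi-Lagrangian stencil convention for odd $p$, because it is precisely the choice of the symmetric $(p+1)$-point stencil that makes the remainder product collapse to the centred product $\prod_{q=-(p+1)/2}^{(p-1)/2}(q + \varepsilon^{(\delta t)})$ defining $f_{p+1}$ in \eqref{eq:fpoly_def}; a different stencil would shift these limits. I would also take care to attach the leading derivative to the correct time level $t_{n+1}$ rather than $t_n$, using the characteristic-preservation identity in step (ii), and to confirm that the formula degenerates correctly when the CFL number is an integer: then $\varepsilon^{(\delta t)} = 0$, the factor $q=0$ forces $f_{p+1}(0) = 0$, and indeed the departure point coincides with a mesh node so interpolation is exact and the leading error vanishes. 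The full argument, with the precise stencil and quantitative control of the remainder, is the content of Lemma 3.1 of \cite{DeSterck_etal_2023_SL}, of which \cref{lem:trunc-est} is a streamlined special case.
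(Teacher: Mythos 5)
Your proposal is correct, but note that the paper itself never proves this lemma: it is imported as ``a simplified version of Lemma 3.1 from De~Sterck~et~al.\cite{DeSterck_etal_2023_SL}'', so the paper's only ``proof'' is a citation to the companion work. What you supply is therefore a genuinely different contribution --- a self-contained first-principles derivation --- and it checks out. Your two pillars are exactly the right ones: exact characteristic tracing, $u(x_i,t_{n+1}) = u(x_i - \alpha\delta t, t_n)$, reduces the truncation error to a pure Lagrange interpolation error at the departure point, and on the symmetric $(p+1)$-point stencil $\xi_q = x_e + qh$, $q \in \{-\tfrac{p+1}{2},\dots,\tfrac{p-1}{2}\}$, the node product collapses to $(-h)^{p+1}\prod_q\big(q+\varepsilon^{(\delta t)}\big) = h^{p+1}(p+1)!\,f_{p+1}\big(\varepsilon^{(\delta t)}\big)$, with the sign handled precisely by $p+1$ being even (a spot check at $p=1$ gives node product $h^2\varepsilon(\varepsilon-1) = 2h^2 f_2(\varepsilon)$, hence error $h^2 f_2(\varepsilon)\,\partial_x^2 u$, consistent with \eqref{eq:SL_trunc}). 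Your step (ii) --- that $\partial_x^{p+1}u$ is itself transported exactly along characteristics for constant $\alpha$ --- is the correct device for attaching the derivative to the time level $t_{n+1}$ at zero cost, after which \cref{def:Dp} converts it to the discrete form with only an ${\cal O}(h^{p+1+s})$ perturbation. The caveats are the ones you already flag: the argument is contingent on the stencil convention (your index set is what makes the limits in \eqref{eq:fpoly_def} come out, and it matches the odd-$p$ scheme of Reference~\citenum{DeSterck_etal_2023_SL}), and on implicit smoothness of $u$ (roughly $p+2$ continuous derivatives for the $\zeta \to x_i^*$ replacement, plus the $p+1+s$ derivatives required in \cref{def:Dp}). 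What your route buys is that the appendix becomes self-contained rather than leaning on the companion paper; what the paper's citation buys is that it avoids committing to, and re-verifying, a specific interpolation/stencil formulation in this text.
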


The result of \cref{lem:trunc-est} is now used to develop an eigenvalue estimate for the semi-Lagrangian discretization.
\begin{theorem}[Eigenvalue estimates of ${\cal S}_{p}^{(\delta t)}$ and $\Psi_{\rm ideal}$] \label{thm:eig-est-SL}
Let $s_p^{(\delta t)}(\omega)$ denote the eigenvalue of the semi-Lagrangian time-stepping operator ${\cal S}_{p}^{(\delta t)}$ associated with the spatial Fourier mode $\bm{\chi}(\omega)$ from \eqref{eq:Fouier_modes_space_def}.
Analogously, let $\big[ s_p^{(\delta t)}(\omega) \big]^m$ denote the eigenvalue of the associated ideal coarse-grid operator, $\Psi_{\rm ideal} = \prod_{k = 0}^{m-1} {\cal S}_{p}^{(\delta t)}$.
Suppose that $p$ is odd.
Then, we have the following estimates for eigenvalues associated with asymptotically smooth Fourier modes (i.e., $\omega = {\cal O}(h)$)
\begin{align} \label{eq:SL_eig_est}
s_p^{(\delta t)} (\omega)
&= 
\exp \left( - \frac{\im \omega \alpha \delta t}{h} \right)
\Big[ 1 
- 
f_{p+1}\big( \varepsilon^{(\delta t)} \big) d_{p+1} (\omega) 
+ {\cal O}(\omega^{p+2})
 \Big],
\\
\label{eq:SL_ideal_eig_est}
\big[ s_p^{(\delta t)}(\omega) \big]^m
&= 
\exp \left( -  \frac{\im \omega \alpha m \delta t}{h} \right)
\Big[ 1 
- 
m
f_{p+1}\big( \varepsilon^{(\delta t)} \big) d_{p+1} (\omega) 
+ {\cal O}(\omega^{p+2})
 \Big],
\end{align}
with $d_{p+1}(\omega)$ the eigenvalue of ${\cal D}_{p+1}$ (see \cref{lem:eig-est-D} for further details).
\end{theorem}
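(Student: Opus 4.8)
The plan is to read off \eqref{eq:SL_eig_est} directly from the truncation error estimate of \cref{lem:trunc-est}, exploiting that for constant-wave-speed advection with periodic boundary conditions on a uniform mesh, the semi-Lagrangian operator ${\cal S}_p^{(\delta t)}$ and the finite-difference operator ${\cal D}_{p+1}$ are both translation-invariant, hence circulant, and therefore share the spatial Fourier modes $\bm{\chi}(\omega)$ as common eigenvectors. First I would take the exact solution in \cref{lem:trunc-est} to be a pure Fourier mode, i.e.\ initial data $\bm{\chi}(\omega)$. Because the advection equation transports a Fourier mode by a pure phase shift (evaluating $\chi$ at $x-\alpha\delta t$ produces the factor $e^{-\im\omega\alpha\delta t/h}$), the exact solution stays proportional to $\bm{\chi}(\omega)$ at every time level, with $\bm{u}(t_{n+1}) = e^{-\im\omega\alpha\delta t/h}\,\bm{u}(t_n)$.

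Next I would substitute the eigen-relations ${\cal S}_p^{(\delta t)}\bm{u}(t_n) = s_p^{(\delta t)}(\omega)\,\bm{u}(t_n)$ and ${\cal D}_{p+1}\bm{u}(t_{n+1}) = d_{p+1}(\omega)\,\bm{u}(t_{n+1})$ into \eqref{eq:SL_trunc}, cancelling the explicit $h^{p+1}$ against the $h^{-(p+1)}$ in $\tfrac{{\cal D}_{p+1}}{h^{p+1}}$. Since every resulting term is a scalar multiple of the nonzero vector $\bm{u}(t_n)$, dividing through yields the scalar identity
\[
e^{-\im\omega\alpha\delta t/h} - s_p^{(\delta t)}(\omega) = f_{p+1}\big(\varepsilon^{(\delta t)}\big)\, d_{p+1}(\omega)\, e^{-\im\omega\alpha\delta t/h} + {\cal O}(h^{p+2}).
\]
Rearranging for $s_p^{(\delta t)}(\omega)$, factoring out the unimodular phase $e^{-\im\omega\alpha\delta t/h}$, and using $\omega = {\cal O}(h)$ to rewrite the trailing ${\cal O}(h^{p+2})$ as an ${\cal O}(\omega^{p+2})$ term absorbed inside the brackets, gives exactly \eqref{eq:SL_eig_est}.

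For the ideal coarse-grid estimate \eqref{eq:SL_ideal_eig_est}, I would raise \eqref{eq:SL_eig_est} to the $m$th power, using that $\Psi_{\rm ideal} = \big({\cal S}_p^{(\delta t)}\big)^m$ has eigenvalue $\big[s_p^{(\delta t)}(\omega)\big]^m$. The prefactor becomes $e^{-\im\omega\alpha m\delta t/h}$, and I would binomially expand the bracketed quantity. Because $d_{p+1}(\omega) = {\cal O}(\omega^{p+1})$ (see \cref{lem:eig-est-D} and \eqref{eq:eig_est-D}) while $f_{p+1}\big(\varepsilon^{(\delta t)}\big)$ is ${\cal O}(1)$ in $\omega$, the linear term contributes $-m\,f_{p+1}\big(\varepsilon^{(\delta t)}\big)\,d_{p+1}(\omega)$, and every quadratic-and-higher term is ${\cal O}(\omega^{2p+2}) \subseteq {\cal O}(\omega^{p+2})$ for $p\geq 1$; these collapse into the remainder, producing \eqref{eq:SL_ideal_eig_est}.

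The main thing to get right—rather than a genuine obstacle—is the error-term bookkeeping: justifying that the higher-order truncation terms of \cref{lem:trunc-est}, evaluated on a pure Fourier mode, remain proportional to $\bm{\chi}(\omega)$ so that dividing out the eigenvector yields a legitimate scalar identity, and confirming that the unit-modulus phase factors neither inflate nor degrade any asymptotic order when passing between ${\cal O}(h^{p+2})$ and ${\cal O}(\omega^{p+2})$ and when raising to the $m$th power.
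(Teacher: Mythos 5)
Your proposal is correct and takes essentially the same route as the paper: both substitute the exact Fourier-mode solution $u(x,t) = \exp\left(\im \omega (x - \alpha t)/h\right)$ of the advection equation into the truncation error estimate of \cref{lem:trunc-est}, use that ${\cal S}_p^{(\delta t)}$ and ${\cal D}_{p+1}$ have $\bm{\chi}(\omega)$ as an eigenvector, divide out the mode, and absorb the ${\cal O}(h^{p+2})$ term as ${\cal O}(\omega^{p+2})$ to obtain \eqref{eq:SL_eig_est}. The only cosmetic difference is in \eqref{eq:SL_ideal_eig_est}: the paper applies ${\cal S}_p^{(\delta t)}$ to both sides a further $m-1$ times, invoking the eigenvalue estimate at each step, whereas you raise the scalar estimate to the $m$th power and expand binomially, noting that the quadratic-and-higher terms are ${\cal O}(\omega^{2p+2}) \subseteq {\cal O}(\omega^{p+2})$ --- the same computation phrased iteratively versus directly.
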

\begin{proof}
Observe the following function is an exact solution of the advection equation $\frac{\partial u}{\partial t} + \alpha \frac{\partial u}{\partial x}
= 0$,
\begin{align} 
\label{eq:u_exact_exponential}
u(x, t) 
= 
\exp \left( \frac{\im \omega}{h} \big(x -  \alpha t \big) \right)
=
\exp \left( - \frac{\im \omega \alpha t }{h} \right) \chi(\omega).
\end{align}
Thus, the vector from \cref{lem:trunc-est} that is an exact PDE solution sampled at the spatial mesh points can be written as $\bm{u}(t) = \exp \left( - \frac{\im \omega  \alpha  t}{h} \right) 
\bm{\chi}(\omega) $. 
Substituting this vector into the truncation error estimate 
 \eqref{eq:SL_trunc} for ${\cal S}_{p}^{(\delta t)}$ gives the following relation for asymptotically smooth Fourier modes:
\begin{align} \label{eq:s_eig_proof_aux}
\begin{split}
&\exp \left( - \frac{\im \omega  \alpha  (t_n + \delta t)}{h} \right) 
\bm{\chi}(\omega) 
- 
\exp \left( - \frac{\im \omega \alpha t_n }{h} \right)  
{\cal S}_{p}^{(\delta t)} \bm{\chi}(\omega)
=
\exp \left( - \frac{\im \omega \alpha (t_n + \delta t)}{h} \right)
\left[
f_{p+1} \big(\varepsilon^{(\delta t)} \big) 
{\cal D}_{p+1}
+ {\cal O}(\omega^{p+2}) \right] \bm{\chi}(\omega).
\end{split}
\end{align}
Here, we have re-written the ${\cal O}(h^{p+2})$ term from \eqref{eq:SL_trunc} as ${\cal O}(\omega^{p+2}) \bm{u}(t_{n+1})$, since ${\cal O}(h^{p+2}) = {\cal O}(\omega^{p+2}) = {\cal O}(\omega^{p+2}) \bm{u}(t_{n+1})$.
Rearranging this equation for ${\cal S}_{p}^{(\delta t)} \bm{\chi}(\omega)$ gives
\begin{align} \label{eq:s_eig_proof_aux2}
{\cal S}_{p}^{(\delta t)} \bm{\chi}(\omega)
&= 
\exp \left( - \frac{\im \omega \alpha \delta t}{h} \right)
\Big[ 1 
-
f_{p+1}\big( \varepsilon^{(\delta t)} \big) {\cal D}_{p+1}
+ {\cal O}(\omega^{p+2})
 \Big]
 \bm{\chi}(\omega).
 \end{align}
Applying $ {\cal D}_{p+1} \bm{\chi}(\omega)  = d_{p+1}(\omega) \bm{\chi}(\omega)$ in \eqref{eq:s_eig_proof_aux2} gives the claim \eqref{eq:SL_eig_est}.

The eigenvalue estimate \eqref{eq:SL_ideal_eig_est} for the associated ideal coarse-grid operator follows by applying ${\cal S}_{p}^{(\delta t)}$ to both sides of \eqref{eq:s_eig_proof_aux2} a further $m-1$ times,  each time invoking on the right-hand side the eigenvalue estimate for ${\cal S}_{p}^{(\delta t)}$, then collecting terms of size ${\cal O}(\omega^{p+2})$ together.
\end{proof}

The following result estimates the eigenvalues of the matrix ${\cal D}_{p+1}$ given in \cref{def:Dp}.
\begin{lemma}[Eigenvalue estimate of ${\cal D}_{p+1}$] \label{lem:eig-est-D}
Let $d_{p+1} ( \omega )$ denote the eigenvalue of ${\cal D}_{p+1}$ from \cref{def:Dp} associated with the spatial Fourier mode $\bm{\chi}(\omega)$.
Suppose that $p$ is odd.
Then, we have the following estimate for eigenvalues associated with asymptotically smooth Fourier modes
\begin{align} \label{eq:eig_est-D} 
d_{p+1}( \omega ) 
= 
(\im \omega)^{p+1} + {\cal O}(\omega^{p+1+s})
=
(-1)^{\frac{p+1}{2}} \omega^{p+1} \big[ 1 + {\cal O}(\omega^s) \big],
\end{align}
with $s$ the order of approximation of ${\cal D}_{p+1}$.
\end{lemma}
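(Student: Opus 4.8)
The plan is to use that ${\cal D}_{p+1}$ is circulant, so that the spatial Fourier mode $\bm{\chi}(\omega)$ from \eqref{eq:Fouier_modes_space_def} is automatically an eigenvector and $d_{p+1}(\omega)$ is nothing other than the Fourier symbol of the underlying finite-difference stencil. Denoting the (finitely many, ${\cal O}(1)$) stencil coefficients of ${\cal D}_{p+1}$ by $\{c_k\}$ and substituting $\chi(\omega)_j = e^{\im \omega j}$ into the stencil gives the closed form \[ d_{p+1}(\omega) = \sum_k c_k\, e^{\im k \omega}, \] which is an entire function of $\omega$. The whole estimate \eqref{eq:eig_est-D} will then follow from Taylor-expanding this symbol about $\omega = 0$ and identifying which coefficients survive.

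First I would expand $e^{\im k\omega} = \sum_{n \geq 0}(\im k\omega)^n/n!$ to write \[ d_{p+1}(\omega) = \sum_{n \geq 0}\frac{(\im\omega)^n}{n!}\,M_n, \qquad M_n := \sum_k c_k\, k^n, \] so that everything reduces to pinning down the stencil moments $M_n$. These are fixed by the order-$s$ consistency property \eqref{eq:Dp+1_def} of \cref{def:Dp}: applying the rule to a generic smooth function $v$, Taylor-expanding $v(x_j + kh)$, and matching powers of $h$ against $v^{(p+1)}(x_j) + {\cal O}(h^s)$ forces $M_n = 0$ for $n \in \{0,\dots,p\}$ (these would otherwise produce divergent negative powers of $h$), $M_{p+1} = (p+1)!$ (normalising the leading derivative), and $M_n = 0$ for $n \in \{p+2,\dots,p+s\}$ (these are precisely the conditions that raise the accuracy from first order to order $s$), with the first uncontrolled moment at $n = p+1+s$.

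Substituting these moments into the symbol series collapses it to \[ d_{p+1}(\omega) = \frac{(\im\omega)^{p+1}}{(p+1)!}\,(p+1)! + \sum_{n \geq p+1+s}\frac{(\im\omega)^n}{n!}\,M_n = (\im\omega)^{p+1} + {\cal O}(\omega^{p+1+s}), \] which is the first equality in \eqref{eq:eig_est-D}. The second equality is then immediate from the hypothesis that $p$ is odd: since $p+1$ is even, $\im^{p+1} = (\im^2)^{(p+1)/2} = (-1)^{(p+1)/2}$, hence $(\im\omega)^{p+1} = (-1)^{(p+1)/2}\omega^{p+1}$, and factoring this leading term out of the remainder yields $d_{p+1}(\omega) = (-1)^{(p+1)/2}\omega^{p+1}\big[1 + {\cal O}(\omega^s)\big]$.

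The one step requiring genuine care — and the part I expect to be the main obstacle — is the order bookkeeping in the second paragraph: one must check that the order-$s$ consistency assumption is exactly what guarantees the intermediate moments $M_{p+2},\dots,M_{p+s}$ vanish, so that the remainder is the stated ${\cal O}(\omega^{p+1+s})$ rather than merely ${\cal O}(\omega^{p+2})$. This is where the accuracy order $s$ enters the final estimate; everything else is a routine expansion of an analytic symbol and the evaluation of $\im^{p+1}$.
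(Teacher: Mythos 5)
Your proof is correct, but it takes a genuinely different route from the paper's. The paper's proof is a one-liner: it takes the test function $\bm{v}$ in \eqref{eq:Dp+1_def} to be the Fourier mode $\bm{\chi}(\omega)$ itself, uses that the exact $(p+1)$st derivative of $\exp(\im\omega x/h)$ is $(\im\omega/h)^{p+1}\exp(\im\omega x/h)$, multiplies through by $h^{p+1}$, and then invokes $\omega = {\cal O}(h)$ to rewrite the resulting ${\cal O}(h^{p+1+s})$ remainder as ${\cal O}(\omega^{p+1+s})$. You instead work at the level of the stencil: circulant structure gives the $h$-free trigonometric symbol $d_{p+1}(\omega)=\sum_k c_k e^{\im k\omega}$, the order-$s$ consistency in \cref{def:Dp} is converted into the discrete moment conditions $M_0=\cdots=M_p=0$, $M_{p+1}=(p+1)!$, $M_{p+2}=\cdots=M_{p+s}=0$, and Taylor expansion of the entire symbol then yields \eqref{eq:eig_est-D}. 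Each approach has its merits: the paper's argument is shorter but implicitly relies on the ${\cal O}(h^s)$ constant in \eqref{eq:Dp+1_def} being applicable to the $h$-dependent test function $\chi(\omega)$ (harmless precisely because $\omega/h={\cal O}(1)$ for asymptotically smooth modes, but glossed over), and it needs the identification $\omega\sim h$ to express the error in powers of $\omega$; your symbol-based argument is purely in $\omega$, so the estimate holds for all small $\omega$ independently of any relation to $h$, and your flagged ``order bookkeeping'' step is exactly where the claimed exponent $p+1+s$ (rather than $p+2$) is earned. The one technicality you elide is that extracting the moment conditions from \eqref{eq:Dp+1_def} by ``matching powers of $h$ against a generic smooth $v$'' requires testing against smooth \emph{periodic} functions (monomials are not admissible on the periodic grid); this is standard — e.g., test against trigonometric polynomials with prescribed derivatives at a point — but worth a sentence, and it does not affect the validity of your argument.
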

\begin{proof}
This follows by taking $\bm{v}$ in \eqref{eq:Dp+1_def} as the spatial Fourier mode $\bm{\chi}(\omega)$ from \eqref{eq:Fouier_modes_space_def}, then using that $\omega = {\cal O}(h)$ for asymptotically smooth modes.
\end{proof}

\subsection{Modified semi-Lagrangian discretization}
\label{app:eig-est-mod-SL}


\begin{lemma} \label{lem:eig-est-SL-mod}
Suppose that $p$ is odd.
Consider the modified semi-Lagrangian coarse-grid operator from (3.20) of De~Sterck~et~al.,\cite{DeSterck_etal_2023_SL} $\Psi 
= 
\Big( I - 
\big[ 
f_{p+1} \big( \varepsilon^{(m \delta t)} \big) 
-
m f_{p+1} \big( \varepsilon^{(\delta t)} \big)
\big] {\cal D}_{p+1} \Big)^{-1} {\cal S}_{p}^{(m \delta t)}$.
Let $\psi(\omega)$ denote the eigenvalue of $\Psi$ associated with spatial Fourier mode $\bm{\chi}(\omega)$ from \eqref{eq:Fouier_modes_space_def}.
Then, we have the following estimate for eigenvalues of $\Psi$ associated with asymptotically smooth Fourier modes (i.e., $\omega = {\cal O}(h)$)
\begin{align} \label{eq:eig-est-SL-mod}
\psi(\omega) = \exp \left( - \frac{\im \omega \alpha m \delta t}{h} \right)
\Big(
1 - m f_{p+1} \big( \varepsilon^{(\delta t)} \big) d_{p+1} (\omega)
+
{\cal O}(\omega^{p+2})
\Big),
\end{align}
with $d_{p+1}(\omega)$ the eigenvalue of ${\cal D}_{p+1}$ (see \cref{lem:eig-est-D} for further details).
\end{lemma}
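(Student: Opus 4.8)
The plan is to exploit the circulant structure shared by every matrix in the definition of $\Psi$, which reduces the eigenvalue computation to a scalar manipulation of rational functions. First I would note that ${\cal S}_p^{(m \delta t)}$ is circulant (by the periodicity and constant wave speed of the advection problem) and that ${\cal D}_{p+1}$ is circulant by \cref{def:Dp}; consequently the matrix $I - \big[ f_{p+1}(\varepsilon^{(m \delta t)}) - m f_{p+1}(\varepsilon^{(\delta t)}) \big]{\cal D}_{p+1}$ and its inverse are circulant as well. All of these matrices are therefore diagonalized simultaneously by the spatial Fourier modes $\bm{\chi}(\omega)$, so the eigenvalue of $\Psi$ associated with $\bm{\chi}(\omega)$ is the scalar quotient
\begin{align}
\psi(\omega)
=
\frac{s_p^{(m \delta t)}(\omega)}{1 - \big[ f_{p+1}\big(\varepsilon^{(m \delta t)}\big) - m f_{p+1}\big(\varepsilon^{(\delta t)}\big) \big] d_{p+1}(\omega)},
\end{align}
where $s_p^{(m \delta t)}(\omega)$ and $d_{p+1}(\omega)$ are the eigenvalues of ${\cal S}_p^{(m \delta t)}$ and ${\cal D}_{p+1}$, respectively.

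Next I would estimate the two pieces. Since ${\cal S}_p^{(m \delta t)}$ is a semi-Lagrangian discretization with time step $m \delta t$, whose CFL number has fractional part $\varepsilon^{(m \delta t)}$, applying the estimate \eqref{eq:SL_eig_est} of \cref{thm:eig-est-SL} with $\delta t$ replaced by $m \delta t$ gives $s_p^{(m \delta t)}(\omega) = \exp(-\im \omega \alpha m \delta t / h)\big[ 1 - f_{p+1}(\varepsilon^{(m \delta t)}) d_{p+1}(\omega) + {\cal O}(\omega^{p+2}) \big]$. For the denominator, \cref{lem:eig-est-D} supplies $d_{p+1}(\omega) = {\cal O}(\omega^{p+1})$, so the bracketed denominator equals $1 + {\cal O}(\omega^{p+1})$ and its reciprocal admits the geometric expansion $1 + \big[ f_{p+1}(\varepsilon^{(m \delta t)}) - m f_{p+1}(\varepsilon^{(\delta t)}) \big] d_{p+1}(\omega) + {\cal O}(\omega^{2p+2})$.

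Finally I would multiply the numerator estimate by this expanded reciprocal. The decisive mechanism---and the reason the correction term was chosen as it was---is that the contribution $-f_{p+1}(\varepsilon^{(m \delta t)}) d_{p+1}(\omega)$ coming from the numerator is exactly cancelled by the $+f_{p+1}(\varepsilon^{(m \delta t)}) d_{p+1}(\omega)$ coming from the denominator expansion, so that the surviving term at order $\omega^{p+1}$ is $-m f_{p+1}(\varepsilon^{(\delta t)}) d_{p+1}(\omega)$, matching the ideal-operator estimate \eqref{eq:SL_ideal_eig_est}. Every discarded cross term is a product of two factors of size ${\cal O}(\omega^{p+1})$ and hence ${\cal O}(\omega^{2p+2})$, which is absorbed into ${\cal O}(\omega^{p+2})$ since $p \geq 1$; this yields exactly \eqref{eq:eig-est-SL-mod}. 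The calculation is routine, so I expect the only real subtleties to be the careful bookkeeping of which ${\cal O}(\omega^{\bullet})$ terms are negligible at each step and a clean justification that the simultaneous circulant structure licenses treating $\psi(\omega)$ as a scalar rational function; that structural observation, rather than any delicate estimate, is the crux.
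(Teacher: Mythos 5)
Your proposal is correct and follows essentially the same route as the paper: diagonalize all the (circulant) factors of $\Psi$ by the spatial Fourier modes to get the scalar quotient, expand the reciprocal geometrically using $d_{p+1}(\omega) = {\cal O}(\omega^{p+1})$, substitute the estimate for $s_p^{(m\delta t)}(\omega)$ from \eqref{eq:SL_eig_est}, and collect ${\cal O}(\omega^{p+2})$ terms. Your write-up is somewhat more explicit than the paper's about the cancellation of the $f_{p+1}\big(\varepsilon^{(m\delta t)}\big)d_{p+1}(\omega)$ terms and about why the circulant structure licenses the scalar reduction, but these are refinements of the same argument, not a different one.
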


\begin{proof}
Applying $\Psi$ to an asymptotically smooth Fourier mode and then invoking the geometric expansion $\frac{1}{1 - \varepsilon} = 1 + \varepsilon+ {\cal O}(\varepsilon^2)$ gives
\begin{align}
\Psi \bm{\chi}(\omega)
&=
\frac{s_p^{(m \delta t)}}{
1 - \big[ 
f_{p+1} \big( \varepsilon^{(m \delta t)} \big) 
-
m f_{p+1} \big( \varepsilon^{(\delta t)} \big)
\big] d_{p+1} (\omega)} 
\bm{\chi}(\omega),
\\
\label{eq:eig-est-SL-mod-aux}
&=
\Big( 1 + \big[ 
f_{p+1} \big( \varepsilon^{(m \delta t)} \big) 
-
m f_{p+1} \big( \varepsilon^{(\delta t)} \big)
\big] d_{p+1} (\omega)
+
{\cal O}\big( [d_{p+1} (\omega) ]^2 \big)
\Big) 
s_p^{(m \delta t)}
\bm{\chi}(\omega).
\end{align}
The claimed result \eqref{eq:eig-est-SL-mod} follows by substituting into \eqref{eq:eig-est-SL-mod-aux} the estimate for $s_p^{(m \delta t)}$ that arises from \eqref{eq:SL_eig_est}, applying the fact that $d_{p+1} (\omega) = {\cal O}(\omega^{p+1})$ from \eqref{eq:eig_est-D}, and then collecting together terms of size ${\cal O}(\omega^{p+2})$.
\end{proof}

\section{Remainder of the proof of \cref{THM:RHO-LWR-BOUND-SL} (Convergence factor lower bound)}
\label{app:conv-fac-constant}

Here, we continue with the proof of \cref{THM:RHO-LWR-BOUND-SL}, showing that $
\widecheck{\rho}_p \big( \varepsilon^{(\delta t)} \big)
>
1
$ 
when
$
\varepsilon^{(\delta t)} \in \Upsilon_m \cap \left( \frac{2}{3 m}, 1 - \frac{2}{3 m} \right)
$.

\begin{proof}
For notational simplicity, write $\varepsilon \equiv \varepsilon^{(\delta t)}$ throughout the proof.
This proof relies on details about $f_{p+1} (\varepsilon)$ and $f_{p+1} (m \varepsilon - \lfloor m \varepsilon \rfloor)$ given in Lems. 4.5 \& B.1 of Krzysik,\cite{KrzysikThesis2021} as well as \cref{lem:fDelta} below.
We prove the claim only for the case of $\frac{p+1}{2}$ even corresponding to $f_{p+1}$ being non-negative. The proof extends to $\frac{p+1}{2}$ odd by following through the sign changes on $f_{p+1}$.

See \cref{fig:fp_plots} for examples of the functions $f_{p+1} (\varepsilon)$ and $f_{p+1} (m \varepsilon - \lfloor m \varepsilon \rfloor)$.
Specific properties of $f_{p+1} (\varepsilon)$ that we use are that it is symmetric over $\varepsilon \in [0, 1]$, and that it is monotonically increasing on $\varepsilon \in [0, \frac{1}{2}]$ to its only critical point, which is a global maximum at $\varepsilon = \frac{1}{2}$.
Additionally, note that $f_{p+1} (m \varepsilon - \lfloor m \varepsilon \rfloor)$ is periodic over $\varepsilon \in [0,1]$ due to its argument being $\frac{1}{m}$-periodic in $\varepsilon$. 
This function has global minima at $\varepsilon = \frac{k}{m}, \, k = 0, \ldots, m$ and global maxima at $\varepsilon = \frac{k}{m} + \frac{1}{2m}, \, k = 0, \ldots, m-1$, and between these global minima and maxima the function changes monotonically.
%

\begin{figure}[b!]
\centerline{
\includegraphics[scale=0.425]{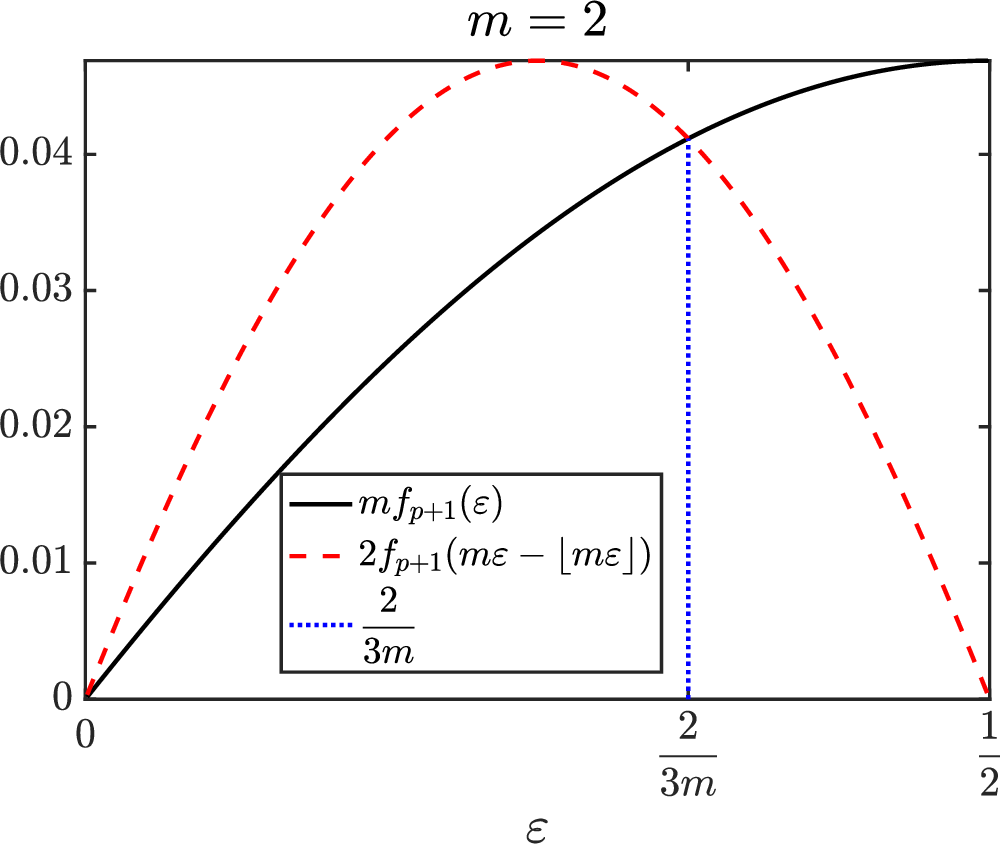}
\quad\quad\quad
\includegraphics[scale=0.425]{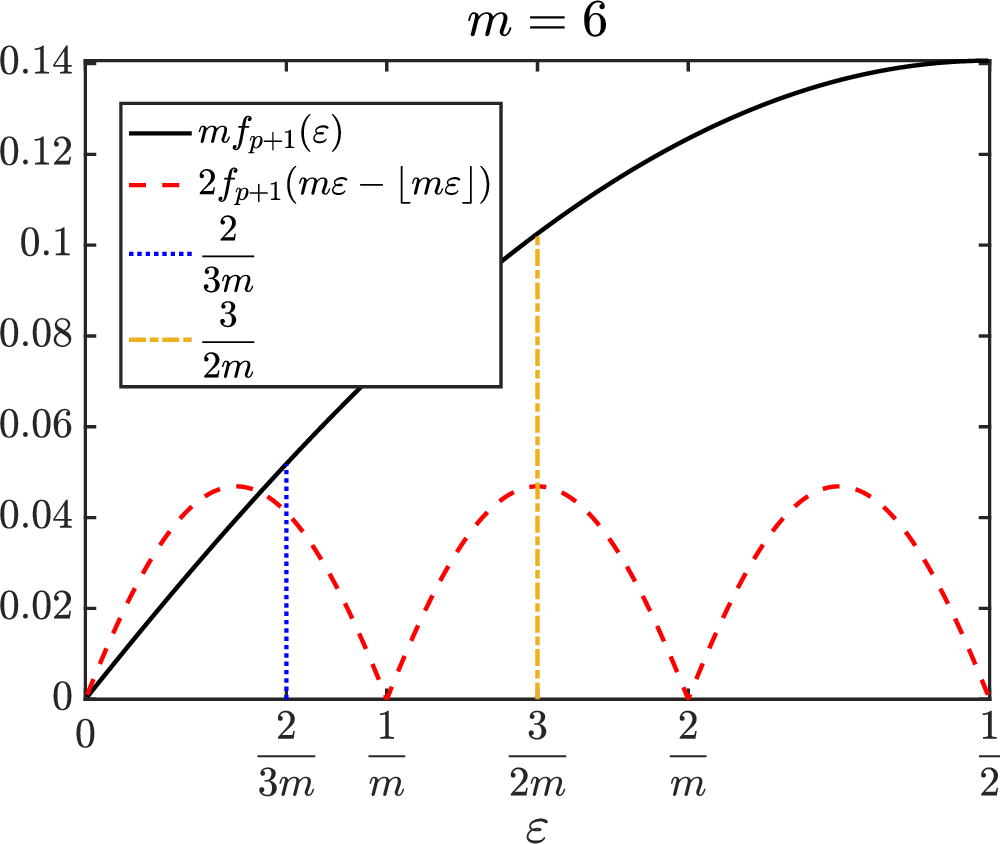}
}
\caption{
Examples of the functions analyzed in \cref{app:conv-fac-constant} for $p = 3$, with $m = 2$ (left), and $m = 6$ (right).
\label{fig:fp_plots}
}
\end{figure}

Define the function
\begin{align}
\widecheck{f}(\varepsilon) := m f_{p+1} (\varepsilon) - 2 f_{p+1} (m \varepsilon - \lfloor m \varepsilon \rfloor),
\end{align}
motivated by the fact that $\widecheck{\rho}(\varepsilon) > 1 \Longleftrightarrow \widecheck{f}(\varepsilon) > 0$ when $\varepsilon \in \Upsilon_m$.
Since $\widecheck{f}(\varepsilon)$ is symmetric on $\varepsilon \in [0, 1]$ we need only consider $\varepsilon \in [0, \frac{1}{2}]$.
The proof works by showing that $m f_{p+1} (\varepsilon) > 2 f_{p+1} (m \varepsilon - \lfloor m \varepsilon \rfloor)$ for all $\varepsilon \in {\cal I} := (\frac{2}{3m}, \frac{1}{2}]$. For $m > 2$ we do this by considering separately the two sub-intervals ${\cal I}_{1} := (\frac{2}{3m}, \frac{3}{2m}]$ and ${\cal I}_2 := [\frac{3}{2m}, \frac{1}{2}]$.
The special case of $m = 2$ is described at the end.

Suppose that $m > 2$ and consider $\varepsilon \in {\cal I}_2$.
From \cref{lem:fDelta} we have $m f_{p+1} (\varepsilon) > 2 f_{p+1} (m \varepsilon - \lfloor m \varepsilon \rfloor)$ at the left-hand boundary of ${\cal I}_2$, and recall that that $2 f_{p+1} (m \varepsilon - \lfloor m \varepsilon \rfloor)$ has a global maximum at this left-hand boundary of ${\cal I}_2$.
Then, since $m f_{p+1} (\varepsilon)$ is monotonically increasing over $\varepsilon \in {\cal I}_2$ it follows that $m f_{p+1} (\varepsilon) > 2 f_{p+1} (m \varepsilon - \lfloor m \varepsilon \rfloor)$ for all $\varepsilon \in {\cal I}_2$.

Suppose that $m > 2$ and consider $\varepsilon \in {\cal I}_1$. 
Recall that $2 f_{p+1} (m \varepsilon - \lfloor m \varepsilon \rfloor)$ is monotonically decreasing on a sub-interval of ${\cal I}_1$ given by $\varepsilon \in [\frac{2}{3m}, \frac{1}{m}]$, while it is monotonically increasing on the remainder of ${\cal I}_1$, $\varepsilon \in [\frac{1}{m}, \frac{3}{2m}]$.
Then, since $m f_{p+1} (\varepsilon) > 2 f_{p+1} (m \varepsilon - \lfloor m \varepsilon \rfloor)$ at both end points of ${\cal I}_1$ (see \cref{lem:fDelta}), it follows that $m f_{p+1} (\varepsilon) > 2 f_{p+1} (m \varepsilon - \lfloor m \varepsilon \rfloor)$ for all $\varepsilon \in {\cal I}_1$ because $m f_{p+1} (\varepsilon)$ is monotonically increasing on ${\cal I}_1$ and it is concave up.

Finally, consider the $m = 2$ case. From \cref{lem:fDelta}, we have that $m f_{p+1} (\varepsilon) = 2 f_{p+1} (m \varepsilon - \lfloor m \varepsilon \rfloor)$ at $\varepsilon = \frac{2}{3m}$. 
Then, recall that $m f_{p+1} (\varepsilon)$ is monotonically increasing on $\varepsilon \in [\frac{2}{3m}, \frac{1}{2}]$ and that $2 f_{p+1} (m \varepsilon - \lfloor m \varepsilon \rfloor)$ is monotonically decreasing on $\varepsilon \in [\frac{2}{3m}, \frac{1}{2}]$.
Therefore, it follows that $m f_{p+1} (\varepsilon) > 2 f_{p+1} (m \varepsilon - \lfloor m \varepsilon \rfloor)$ for all $\varepsilon \in (\frac{2}{3m}, \frac{1}{2}]$.
\end{proof}

\begin{lemma} \label{lem:fDelta}
Let $p \in \mathbb{N}$ such that $\frac{p+1}{2}$ is odd, let and $m \in \mathbb{N} \setminus \{1\}$.
Let $f_{p+1}(z)$ be the polynomial in \eqref{eq:fpoly_def}.
Then,
\begin{align} \label{eq:fDelta-inequality}
m f_{p+1} (\varepsilon) \geq 2 f_{p+1} (m \varepsilon - \lfloor m \varepsilon \rfloor)
\end{align}
holds at $\varepsilon = \frac{3}{2m}$ for $m \geq 3$ without equality (Claim I), and holds at $\varepsilon = \frac{2}{3m}$ with equality for $m = 2$, and without equality for $m \geq 3$ (Claim II).
\end{lemma}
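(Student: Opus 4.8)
The plan is to collapse both claims onto a single monotonicity-in-$m$ statement for the one-parameter family $g_a(m) := m\, f_{p+1}(a/m)$, specialized to the anchor values $a=\tfrac{3}{2}$ (Claim I, where $m\varepsilon-\lfloor m\varepsilon\rfloor=\tfrac12$) and $a=\tfrac{2}{3}$ (Claim II, where $m\varepsilon-\lfloor m\varepsilon\rfloor=\tfrac23$). The structural observation that makes this work is that the $q=0$ factor in \eqref{eq:fpoly_def} equals $a/m$, which exactly cancels the leading $m$:
\begin{align}
m\, f_{p+1}\!\left(\tfrac{a}{m}\right)
=
\frac{a}{(p+1)!}
\prod_{\substack{q=-(p+1)/2\\ q\neq 0}}^{(p-1)/2}
\left( q + \tfrac{a}{m} \right).
\end{align}
Hence $g_a$ depends on $m$ only through this punctured product, which I would analyze through the continuous variable $t=a/m$; for every $(a,m)$ pair in play one has $t<1$ (indeed $t\le\tfrac12$ when $a=\tfrac32,\,m\ge3$, and $t\le\tfrac13$ when $a=\tfrac23,\,m\ge2$). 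I would record once, directly from \eqref{eq:fpoly_def}, that the roots of $f_{p+1}$ are $p+1$ consecutive integers symmetric about $\tfrac12$, so that $f_{p+1}(z)=f_{p+1}(1-z)$, and that $f_{p+1}$ has constant sign $(-1)^{(p+1)/2}$ on $(0,1)$.

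Next I would dispatch the anchor (boundary) values. The symmetry $f_{p+1}(\tfrac13)=f_{p+1}(\tfrac23)$ settles the equality assertion of Claim II at $m=2$ immediately, since there $g_{2/3}(2)=2f_{p+1}(\tfrac13)=2f_{p+1}(\tfrac23)$, which is exactly the right-hand side of \eqref{eq:fDelta-inequality}. For Claim I the natural anchor is $m=3$, where $\tfrac{3}{2m}=\tfrac12$ gives $g_{3/2}(3)=3f_{p+1}(\tfrac12)$ against a right-hand side of $2f_{p+1}(\tfrac12)$; the two differ by precisely $f_{p+1}(\tfrac12)$, which is strictly of one sign, yielding strict inequality. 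This computation also explains transparently why $m=2$ must be excluded from Claim I: at $m=2$ one compares $2f_{p+1}(\tfrac34)$ with $2f_{p+1}(\tfrac12)$, and since $\tfrac12$ is the extremum of $f_{p+1}$ on $[0,1]$ the comparison runs the opposite way.

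The central remaining step is to propagate these anchor inequalities to all $m\ge3$ via strict monotonicity of $g_a$ in $m$. I would obtain this by logarithmically differentiating the punctured product in $t$: its derivative is $P(t)\sum_{q\neq0}(q+t)^{-1}$, and pairing each $q>0$ with $-q$ gives $\tfrac{1}{q+t}-\tfrac{1}{q-t}=\tfrac{-2t}{q^2-t^2}<0$ (using $q\ge1>t$), while the single unpaired term $q=-\tfrac{p+1}{2}$ is likewise negative for $t<1$; thus the bracketed sum is strictly negative on $(0,1)$. Combined with the fixed sign of $P(t)$, this pins down the sign of $\tfrac{d}{dt}g_a$, hence strict monotonicity of $g_a$ as $m$ varies, which together with the anchors closes both claims.

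The hard part is the sign bookkeeping, and it is genuinely delicate: the sign of $f_{p+1}$ (equivalently of $P$) flips with the parity of $\tfrac{p+1}{2}$, and that same parity controls whether "growth of $|g_a|$ in $m$" corresponds to the $\ge$ or the $\le$ orientation of \eqref{eq:fDelta-inequality}. I would therefore carry the magnitude estimate through abstractly and restore the sign only at the very end, keeping the orientation consistent with the parent argument of \cref{THM:RHO-LWR-BOUND-SL}, which isolates the non-negative regime $\tfrac{p+1}{2}$ even (so $f_{p+1}\ge0$) and transfers the complementary parity by tracking these sign changes. The only other items to verify are the elementary monotonicity and convexity properties of $f_{p+1}$ on $(0,\tfrac12]$ that the parent proof quotes around \cref{lem:fDelta}; both follow from the same product structure and the symmetry about $\tfrac12$, so no new machinery is needed beyond the logarithmic-derivative computation above.
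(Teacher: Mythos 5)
Your proposal is correct, but it takes a genuinely different route from the paper's proof. The paper works from the difference-of-squares factorization $f_{p+1}(z)=\frac{z\left(\frac{p+1}{2}-z\right)}{(p+1)!}\prod_{q=1}^{(p-1)/2}\left(q^2-z^2\right)$ quoted from Krzysik as \eqref{eq:fDelta_squares}: for Claim I it compares $m f_{p+1}\!\left(\tfrac{3}{2m}\right)$ against $2f_{p+1}\!\left(\tfrac12\right)$ factor by factor (using $\tfrac32>1$ and $\tfrac{3}{2m}\le\tfrac12$ for $m\ge3$), and for Claim II it combines the $m=2$ equality (symmetry of $f_{p+1}$ about $z=\tfrac12$) with the observation that every factor of $m f_{p+1}\!\left(\tfrac{2}{3m}\right)$ is positive and strictly increasing in $m$. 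You instead cancel the prefactor $m$ against the $q=0$ root of \eqref{eq:fpoly_def}, reduce \emph{both} claims to a single strict monotonicity-in-$m$ statement for $g_a(m)=m f_{p+1}(a/m)$, prove it by a logarithmic-derivative computation with $\pm q$ pairing (each pair contributes $\tfrac{-2t}{q^2-t^2}<0$, the unpaired $q=-\tfrac{p+1}{2}$ term is also negative), and then anchor at $m=3$ for Claim I (where $\tfrac{3}{2m}=\tfrac12$, so the two sides differ by exactly $f_{p+1}(\tfrac12)\neq 0$) and at $m=2$ for Claim II; both anchors and the monotonicity check out. Your route buys uniformity (one mechanism for both claims), makes Claim I and the exclusion of $m=2$ from it transparent, and replaces the paper's casewise factor comparisons with one calculus lemma; the paper's route buys a completely algebraic, calculus-free argument that leans on a factorization already available in the companion thesis, and its Claim II is in fact also an anchored factorwise monotonicity argument, so the two proofs overlap there. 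One caution you share with the paper: the lemma's hypothesis says $\tfrac{p+1}{2}$ is odd, yet both your argument and the paper's are carried out in the regime $f_{p+1}\ge 0$, i.e.\ $\tfrac{p+1}{2}$ even (the paper's proof explicitly invokes the even-parity formula), with the other parity handled by sign-tracking as in the parent proof of \cref{THM:RHO-LWR-BOUND-SL}; your explicit flagging and deferral of this bookkeeping matches the paper's own convention, so it is not a gap relative to the paper, though note that the inequality as literally stated reverses orientation when $f_{p+1}<0$.
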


\begin{proof}
It is useful to consider an alternative expression for $f_{p+1}(z)$ than \eqref{eq:fpoly_def}. From (B.8) of Krzysik,\cite{KrzysikThesis2021} for $\frac{p+1}{2}$ even, we have
\begin{align} \label{eq:fDelta_squares}
f_{p+1}(z) = \frac{z \left( \frac{p+1}{2} - z \right)}{(p+1)!} \prod \limits_{q = 1}^{\frac{p-1}{2}} \big( q^2 - z^2)
\end{align}

Consider Claim I.
If $\varepsilon = \frac{3}{2m}$, then $m \varepsilon - \lfloor m \varepsilon \rfloor = \frac{1}{2}$, and \eqref{eq:fDelta-inequality} reduces to $m f_{p+1} \big(\frac{3}{2m} \big) \geq 2 f_{p+1} \big( \frac{1}{2} \big)$.
Plugging into \eqref{eq:fDelta_squares} gives
\begin{align} 
\label{eq:fDelta_aux1}
m f_{p+1} \left(\frac{3}{2 m} \right) 
&= 
\frac{\frac{3}{2} \left( \frac{p+1}{2} - \frac{3}{2 m} \right)}{(p+1)!} \prod \limits_{q = 1}^{\frac{p-1}{2}} \left[ q^2 - \left( \frac{3}{2 m} \right)^2 \right],
\\
\label{eq:fDelta_aux2}
2 f_{p+1} \left(\frac{1}{2} \right) 
&= 
\frac{1 \left( \frac{p+1}{2} - \frac{1}{2} \right)}{(p+1)!} \prod \limits_{q = 1}^{\frac{p-1}{2}} \left[ q^2 - \left( \frac{1}{2} \right)^2 \right].
\end{align}
Now compare \eqref{eq:fDelta_aux1} and \eqref{eq:fDelta_aux2} factor by factor.
First, $\frac{3}{2} > 1$. 
Next, note that for $m \geq 3$, we have $\frac{3}{2 m} \leq \frac{1}{2}$.
Thus, 
$
\frac{p+1}{2} - \frac{3}{2 m} \geq  \frac{p+1}{2} - \frac{1}{2} > 0
$,
and, finally, \\
$q^2 - \left( \frac{3}{2 m} \right)^2 
\geq 
q^2 - \left( \frac{1}{2} \right)^2
>
0
$.
Clearly \eqref{eq:fDelta_aux1} is greater than \eqref{eq:fDelta_aux2} when $m \geq 3$.

Now consider Claim II. If $\varepsilon = \frac{2}{3m}$, then $m \varepsilon - \lfloor m \varepsilon \rfloor = \frac{2}{3}$, and \eqref{eq:fDelta-inequality} reduces to 
$
m f_{p+1} \big(\frac{2}{3m} \big) \geq 2 f_{p+1} \big( \frac{2}{3} \big)
$.
Consider first the $m = 2$ case, for which \eqref{eq:fDelta-inequality} reduces to 
$
2 f_{p+1} \big( \frac{1}{3}) \geq 2 f_{p+1} \big( \frac{2}{3} \big)
$.
From Lemma B.1 of Krzysik,\cite{KrzysikThesis2021} $f_{p+1}(z)$ is symmetric about $z = \frac{1}{2}$, therefore the statement
$
2 f_{p+1} \big( \frac{1}{3}) \geq 2 f_{p+1} \big( \frac{2}{3} \big)
$
must hold with equality.
Now consider $m \geq 3$, for which we again make use of \eqref{eq:fDelta_squares}.
Plugging $\varepsilon = \frac{2}{3m}$ into \eqref{eq:fDelta_squares} we have
\begin{align} 
\label{eq:fDelta_aux3}
m f_{p+1} \left( \frac{2}{3m} \right) 
&= 
\frac{
\frac{2}{3}  
\left( \frac{p+1}{2} - \frac{2}{3m} \right)
}{(p+1)!}
\prod \limits_{q = 1}^{\frac{p-1}{2}} \left[ q^2 - \left( \frac{2}{3m} \right)^2 \right].
\end{align}
Notice that all of the individual factors in \eqref{eq:fDelta_aux3} are positive for $m \geq 2$, and, furthermore, that they are all strictly increasing for $m \geq 2$.
Since we have $m f_{p+1} \left( \frac{2}{3m} \right) = 2 f_{p+1} \big( \frac{2}{3} \big)$ when $m = 2$, it must hold that $m f_{p+1} \left( \frac{2}{3m} \right) > 2 f_{p+1} \big( \frac{2}{3} \big)$ for $m > 2$ due to $m f_{p+1} \left( \frac{2}{3m} \right)$ strictly increasing for $m \geq 2$.
\end{proof}

\end{document}